\definecolor{refkey}{rgb}{1,0,0} 
\definecolor{labelkey}{rgb}{1,0,0}
\theoremstyle{definition}
\newtheorem{theorem}{Theorem}[section]
\newtheorem{proposition}[theorem]{Proposition}
\newtheorem{corollary}[theorem]{Corollary}
\newtheorem{lemma}[theorem]{Lemma}
\newtheorem{definition}[theorem]{Definition}
\newtheorem{remark}[theorem]{Remark}
\numberwithin{equation}{section} %(節, 番号)
\def\rnum#1{\expandafter{\romannumeral #1}} 
\def\Rnum#1{\uppercase\expandafter{\romannumeral #1}} 
\title[Schr\"odinger equation with a potential]{Global dynamics below the ground state for the focusing Schr\"odinger equation with a potential}
\author[M. Hamano and M. Ikeda]{Masaru Hamano and Masahiro Ikeda}
\address[Masaru Hamano]{Department of Mathematics, Graduate School of Science and Engineering Saitama University, Shimo-Okubo 255, Sakura-ku, Saitama-shi, Saitama 338-8570, Japan}
\email{m.hamano.733@ms.saitama-u.ac.jp}
\address[Masahiro Ikeda]{Department of Mathematics, Faculty of Science and Technoligy, Keio University, 3-14-1 Hiyoshi, Kohoku-ku, Yokohama, 223-8522, Japan/Center for Advanced Intelligence Project, Riken, Japan}
\email{masahiro.ikeda@keio.jp/masahiro.ikeda@riken.jp}
\keywords{Nonlinear Schr\"odinger equation, scattering, blowing-up, growing-up, potential}
\begin{document}

\maketitle

\textbf{Abstract.} In this paper, we consider the nonlinear Schr\"odinger equation with a real valued potential $V=V(x)$. We study global behavior of solutions to the equation with a data below the ground state under some conditions for the potential $V$ and prove a scattering result and a blowing-up result in mass-supercritical and energy-subcritical. Our proof of the scattering result is based on an argument by Dodson--Murphy \cite{03}. The proof of the blowing-up or growing-up result without radially symmetric assumption is based on the argument by Du--Wu--Zhang in \cite{04}. We can exclude the possibility of the growing-up result by the argument in \cite{16}, \cite{25}, and \cite{26} if ``the data and the potential are radially symmetric'' or ``the data has finite variance''.

\tableofcontents

\section{Introduction}

\subsection{Background}
　\\
　\\
In this paper, we consider time behavior of solutions to the following focusing mass-supercritical, energy-subcritical nonlinear Schr\"odinger equation with a linear potential.
\begin{equation}
\notag \text{(NLS$_V$) }
\begin{cases}
\hspace{-0.4cm}&\displaystyle{i\partial_tu+\Delta u-Vu+|u|^{p-1}u=0\quad(t,x)\in\mathbb{R}\times\mathbb{R}^3},\\
\hspace{-0.4cm}&\displaystyle{u(0,x)=u_0(x)\in H^1(\mathbb{R}^3)},
\end{cases}
\end{equation}
where $p>1$, $u=u(t,x)$ is a complex valued unknown function, $V=V(x)$ is a real valued given function of $x\in\mathbb{R}^3$.\\

(NLS$_V$) has physical background as follows. (NLS$_V$) with $V$ satisfying
\begin{align*}
V(x)\longrightarrow0\ \text{ as }\ |x|\rightarrow\infty\ \ \text{ and }\ \ V\in L^\infty
\end{align*}
is a model proposed to describe the local dynamics at a nucleation site, the attractive potential $V$ simulating a local depression in the ion density (see \cite{29}). Also, when $V$ is a harmonic potential $|x|^2$, (NLS$_V$) is a model to describe the Bose--Einstein condensate with attractive inter-particle interactions under a magnetic trap (see \cite{27}, \cite{28}, \cite{30}).\\

We define the potential class $\mathcal{K}_0$ as the norm closure of bounded and compactly supported functions with respect to the global Kato norm
\begin{align*}
\|V\|_{\mathcal{K}}:=\sup_{x\in\mathbb{R}^3}\int_{\mathbb{R}^3}\frac{|V(y)|}{|x-y|}dy,
\end{align*}
and denote the negative part of $V$ by
\begin{align*}
V_-(x):=\min(V(x),0).
\end{align*}
If we assume that
\begin{align}
V\in\mathcal{K}_0\cap L^\frac{3}{2} \label{002}
\end{align}
and
\begin{align}
\|V_-\|_{\mathcal{K}}<4\pi, \label{003}
\end{align}
then the Schr\"odinger operator $\mathcal{H}=-\Delta+V$ has no eigenvalues (see \cite{10}). The Schr\"odinger operator $\mathcal{H}$ is self-adjoint into $L^2$. By Stone's theorem, the Schr\"odinger evolution group $\{e^{-it\mathcal{H}}\}_{t\in\mathbb{R}}$ is generated on $L^2(\mathbb{R})$. Also, if $V$ satisfies \eqref{003}, then $\mathcal{H}$ and $1+\mathcal{H}$ are non-negative, that is, the following estimates
\begin{align*}
(\mathcal{H}f,f)_{L^2}=\|\nabla f\|_{L^2}^2+\int_{\mathbb{R}^3}V(x)|f(x)|^2dx\geq\left(1-\frac{\|V_-\|_{\mathcal{K}}}{4\pi}\right)\|\nabla f\|_{L^2}^2>0,
\end{align*}
\begin{align*}
((1+\mathcal{H})f,f)_{L^2}=\|f\|_{H^1}^2+\int_{\mathbb{R}^3}V(x)|f(x)|^2dx\geq\|f\|_{L^2}^2+\left(1-\frac{\|V_-\|_{\mathcal{K}}}{4\pi}\right)\|\nabla f\|_{L^2}^2>0
\end{align*}
hold for any $(0\neq)f\in H^1$ (see \cite{10}). Therefore, the fractional operators $(1+\mathcal{H})^\frac{1}{2}$ and $\mathcal{H}^\frac{1}{2}$ are well defined on the domain
\begin{align*}
H_V^1:=\{f\in H^1;\|f\|_{H_V^1}<\infty\}
\end{align*}
with the norm
\begin{align*}
\|f\|_{H_V^1}^2=\|(1+\mathcal{H})^\frac{1}{2}f\|_{L^2}^2=\|f\|_{L^2}^2+\|\mathcal{H}^\frac{1}{2}f\|_{L^2}^2:=\|f\|_{L^2}^2+\|\nabla f\|_{L^2}^2+\int_{\mathbb{R}^3}V(x)|f(x)|^2dx.
\end{align*}

We mainly assume $\frac{7}{3}<p<5$ in this paper. Here, we explain why the exponent $p$ is resricted to $\frac{7}{3}<p<5$. The equation (NLS$_V$) with $V=0$
\begin{align} 
i\partial_tu+\Delta u+|u|^{p-1}u=0\label{053}
\end{align}
is invariant under the following scaling transformation. If $u(t,x)$ is the solution to \eqref{053}, then for any $\lambda>0$,
\begin{align}
u_{[\lambda]}(t,x):=\lambda^\frac{2}{p-1}u(\lambda^2t,\lambda x) \label{004}
\end{align}
is also the solution to \eqref{053}. From transformation $u(t,x)\mapsto u_{[\lambda]}(t,x)$, the initial data is translated into 
\begin{align*}
u_0(x)\mapsto u_{0\{\lambda\}}(x):=\lambda^\frac{2}{p-1}u_0(\lambda x).
\end{align*}
$u_{0\{\lambda\}}$ and $s_c:=\frac{3}{2}-\frac{2}{p-1}$ satisfy
\begin{align}
\|u_{0\{\lambda\}}\|_{\dot{H}^{s_c}}=\|u_0\|_{\dot{H}^{s_c}}.\label{054}
\end{align}
From \eqref{054}, we see that if $p=\frac{7}{3}$ (resp. $p=5$), then $L^2$-norm (resp. $\dot{H}^1$-norm) of the initial data is invariant. In this sense, $p=\frac{7}{3}$ is called $L^2$ or mass-critical and $p=5$ is called $\dot{H}^1$ or energy-critical. $\frac{7}{3}<p<5$ is called mass-supercritical and energy-subcritical. Namely, we mainly consider mass-supercritical and energy-subcritical case in this paper.\\

The Cauchy problem (NLS$_V$) is locally well-posed in $H^1$ (see Theorem \ref{Local well-posedness 1}). $H^1$-solution to (NLS$_V$) conserves its mass and energy, defined respectively by
\begin{align*}
M[u(t)]:=\int_{\mathbb{R}^3}|u(t,x)|^2dx,
\end{align*}
\begin{align*}
E[u(t)]=E_V[u(t)]:=\frac{1}{2}\int_{\mathbb{R}^3}|\nabla u(t,x)|^2+V(x)|u(t,x)|^2dx-\frac{1}{p+1}\int_{\mathbb{R}^3}|u(t,x)|^{p+1}dx.
\end{align*}

\subsection{Main result}
　\\
　\\
To state our main result, we recall the definition of scattering, blowing-up, and growing-up to (NLS$_V$).

\begin{definition}[Scattering, Blowing-up, and Growing-up]
Let $(T_\text{min},T_\text{max})$ denote the maximal lifespan of $u$.
\begin{itemize}
\item (Scattering)\\
We say that the solution $u$ to (NLS$_V$) scatters in positive time (resp. negative time) if $T_\text{max}=\infty$ (resp. $T_\text{min}=-\infty$) and there exists $\psi_+\in H^1$ (resp. $\psi_-\in H^1$) such that
\begin{align*}
\lim_{t\rightarrow+\infty}\|u(t)-e^{-it\mathcal{H}}\psi_+\|_{H^1}=0\ \ \ \left(\text{resp.}\ \lim_{t\rightarrow-\infty}\|u(t)-e^{-it\mathcal{H}}\psi_-\|_{H^1}=0\right).
\end{align*}
\item (Blowing-up)\\
We say that the solution $u$ to (NLS$_V$) blows-up in positive time (resp. negative time) if $T_{\text{max}}<\infty$ (resp. $T_\text{min}>-\infty$).
\item (Glowing-up)\\
We say that the solution $u$ to (NLS$_V$) grows-up in positive time (resp. negative time) if $T_\text{max}=\infty$ (resp. $T_\text{min}=-\infty$) and
\begin{align*}
\limsup_{t\rightarrow+\infty}\|\nabla u(t)\|_{L^2}=\infty\ \ \ \left(\text{resp.}\ \limsup_{t\rightarrow-\infty}\|\nabla u(t)\|_{L^2}=\infty\right).
\end{align*}
\end{itemize}
\end{definition}

Our aim in this paper is to determine long time behavior of solutions to (NLS$_V$). There are various kinds of solutions depending on the choice of the data and the potential, for example, scattering solution, blowing-up solution, growing-up solution, standing wave solution and so on. In this paper, we investigate this problem under the assumption that a value of the product of mass and energy of the initial data is less than the product of mass and energy without the potential of the ground state (see Theorem \ref{Scattering versus blowing-up or growing-up}). Here, the ground state $Q$ is the unique radial positive solution to the following elliptic equation:
\begin{align}
-Q+\Delta Q+Q^p=0. \label{006}
\end{align}
The existence and uniqueness of radial positive solution to \eqref{006} were proved in \cite{02} and \cite{15}, respectively. We note that $u(t,x)=e^{it}Q(x)$ is a time-global non-scattering solution to \eqref{053} and is called a standing wave solution.\\

The following theorem is one of the main results in {\cite{10}.

\begin{theorem}[Hong, \cite{10}]\label{Hong theorem}
Let $p=3$ and $u_0\in H^1(\mathbb{R}^3)$. Suppose that $V$ satisfies $V\in \mathcal{K}_0\cap L^\frac{3}{2}$, $V\geq0$, $x\cdot\nabla V\leq0$, and $x\cdot\nabla V\in L^\frac{3}{2}$. We also assume that
\begin{align*}
M[u_0]E_V[u_0]<M[Q]E_0[Q]\ \ \text{ and }\ \ \|u_0\|_{L^2}\|\mathcal{H}^\frac{1}{2}u_0\|_{L^2}<\|Q\|_{L^2}\|\nabla Q\|_{L^2},
\end{align*}
where $E_0[Q]$ is the energy without a potential
\begin{align*}
E_0[Q]=\frac{1}{2}\int_{\mathbb{R}^3}|\nabla Q(x)|^2dx-\frac{1}{p+1}\int_{\mathbb{R}^3}|Q(x)|^{p+1}dx.
\end{align*}
Then,
\begin{align*}
\|u(t)\|_{L^2}\|\mathcal{H}^\frac{1}{2}u(t)\|_{L^2}<\|Q\|_{L^2}\|\nabla Q\|_{L^2}
\end{align*}
for any $t\in\mathbb{R}$ and $u$ scatters.
\end{theorem}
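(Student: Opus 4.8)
The plan is to prove the statement in two stages: a \emph{coercivity} stage giving the uniform-in-time bound, followed by a \emph{dispersive} stage giving scattering, both driven by the sharp Gagliardo--Nirenberg inequality and the structural hypotheses on $V$.

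First I would establish the uniform bound. Since $V\geq0$ we have $\|\nabla f\|_{L^2}\leq\|\mathcal{H}^{\frac{1}{2}}f\|_{L^2}$, so the sharp Gagliardo--Nirenberg inequality $\|f\|_{L^4}^4\leq C_{GN}\|\nabla f\|_{L^2}^3\|f\|_{L^2}$ yields
\[
E_V[u(t)]=\tfrac12\|\mathcal{H}^{\frac{1}{2}}u(t)\|_{L^2}^2-\tfrac14\|u(t)\|_{L^4}^4\geq\tfrac12\|\mathcal{H}^{\frac{1}{2}}u(t)\|_{L^2}^2-\tfrac{C_{GN}}{4}\|\mathcal{H}^{\frac{1}{2}}u(t)\|_{L^2}^3\|u_0\|_{L^2}.
\]
Multiplying by $M[u_0]=\|u_0\|_{L^2}^2$ and setting $y(t):=\|u(t)\|_{L^2}\|\mathcal{H}^{\frac{1}{2}}u(t)\|_{L^2}$, the right-hand side becomes $f(y(t))$ with $f(y)=\tfrac12 y^2-\tfrac{C_{GN}}{4}y^3$; crucially, because $d=p=3$ the powers of the mass cancel and $f$ is scale invariant. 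Using the Pohozaev identities for \eqref{006} (which give $\|\nabla Q\|_{L^2}^2=3\|Q\|_{L^2}^2$ and $\|Q\|_{L^4}^4=4\|Q\|_{L^2}^2$, hence $C_{GN}=\tfrac{4}{3\sqrt3\,\|Q\|_{L^2}^2}$) one checks that $f$ attains its maximum at $y^\ast=\|Q\|_{L^2}\|\nabla Q\|_{L^2}$ with $f(y^\ast)=M[Q]E_0[Q]$. The hypotheses thus read $f(y(t))\leq M[u_0]E_V[u_0]<f(y^\ast)$ together with $y(0)<y^\ast$, so a standard continuity argument (using that $t\mapsto y(t)$ is continuous along the $H^1$ flow) traps $y(t)$ in the left well for all $t$, which is exactly the claimed inequality. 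Retaining the gap $M[u_0]E_V[u_0]\leq(1-\delta_0)f(y^\ast)$ upgrades this to the quantitative coercivity
\[
8\|\nabla u(t)\|_{L^2}^2-6\|u(t)\|_{L^4}^4\geq 8\delta_0\|\nabla u(t)\|_{L^2}^2\geq0,\qquad t\in\mathbb{R}.
\]

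For scattering I would first reduce to a spacetime bound. Because $V\in\mathcal{K}_0\cap L^{\frac{3}{2}}$ with $\|V_-\|_{\mathcal{K}}<4\pi$ and $\mathcal{H}$ has no eigenvalues, $e^{-it\mathcal{H}}$ obeys the same dispersive and Strichartz estimates as the free group and $\|\cdot\|_{H^1}\sim\|\cdot\|_{H^1_V}$; by the perturbation/stability theory behind Theorem \ref{Local well-posedness 1} it then suffices to show that a critical Strichartz norm of $u$ on $[0,\infty)$ is finite. To produce this, I would run the Dodson--Murphy virial--Morawetz scheme of \cite{03}: take a radial weight $a_R$ equal to $|x|^2$ on $\{|x|\leq R\}$ with $\nabla a_R=\psi(|x|/R)\,x$, $0\leq\psi\leq 2$, and bounded higher derivatives, and set $z_R(t)=\int a_R|u|^2$. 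Then $|z_R'(t)|\lesssim R\|u\|_{L^2}\|\nabla u\|_{L^2}\lesssim R$ is bounded uniformly, while
\[
z_R''(t)=8\|\nabla u\|_{L^2(|x|\leq R)}^2-6\|u\|_{L^4(|x|\leq R)}^4-2\int_{\mathbb{R}^3}\bigl(\nabla a_R\cdot\nabla V\bigr)|u|^2\,dx+\mathcal{E}_R(t).
\]
Here the potential term is favorable: since $\nabla a_R\cdot\nabla V=\psi(|x|/R)\,(x\cdot\nabla V)$ with $\psi\geq0$ and $x\cdot\nabla V\leq0$, one has $-2\int(\nabla a_R\cdot\nabla V)|u|^2\geq0$; the leading ball-terms are coercive by the previous step; and $\mathcal{E}_R(t)$ collects the tail contributions from $\{|x|>R\}$. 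Integrating in time against the uniform bound on $z_R'$ forces the coercive quantity to be integrable, so $\|\nabla u(t)\|_{L^2(|x|\leq R)}\to0$ along a time sequence; feeding this into the Dodson--Murphy scattering criterion (forbidding mass concentration in bounded regions) then yields finiteness of the scattering norm.

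The main obstacle is the control of the tail term $\mathcal{E}_R(t)$. Its dangerous piece is the nonlinear tail $\int_{|x|>R}|u|^4$: since no radial symmetry of $u_0$ is assumed, it cannot be bounded directly through the radial Sobolev decay $\|u\|_{L^\infty(|x|>R)}\lesssim R^{-1}\|u\|_{H^1}$, and one must instead invoke the non-radial refinement of the method (a solution-adapted Morawetz weight and a finer space localization) or, alternatively, fall back on the concentration--compactness and rigidity scheme, where the strict virial positivity from the coercivity step excludes a minimal nonscattering solution. The potential enters $\mathcal{E}_R$ through $\int_{|x|>R}|x\cdot\nabla V|\,|u|^2$, and this is precisely where $x\cdot\nabla V\in L^{\frac{3}{2}}$ is used: by Hölder and Sobolev it is dominated by $\|x\cdot\nabla V\|_{L^{\frac{3}{2}}(|x|>R)}\,\|u\|_{H^1}^2$, which tends to $0$ as $R\to\infty$ uniformly in $t$ by the uniform $H^1$ bound from the first stage. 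Balancing $R$ against the length of the time window makes all errors negligible against the coercive gain, which completes the argument.
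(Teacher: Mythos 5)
Your first stage is sound: the sharp Gagliardo--Nirenberg inequality combined with $V\geq0$, conservation of mass and energy, the Pohozaev identities, and a continuity argument does yield $\|u(t)\|_{L^2}\|\mathcal{H}^{\frac{1}{2}}u(t)\|_{L^2}<\|Q\|_{L^2}\|\nabla Q\|_{L^2}$ for all $t$, global existence, and the quantitative virial coercivity; this is the same variational argument that underlies Proposition \ref{Coercivity1} and Lemma \ref{Coercivity2}. The genuine gap is in your second stage, and it sits exactly where the difficulty of Hong's theorem lies. The Dodson--Murphy virial/Morawetz scheme you invoke is the one run in Section 4 of this paper, and it needs radial symmetry of both $u_0$ and $V$ in two essential places: the nonlinear tail $\int_{|x|>R}|u|^{p+1}dx$ is controlled only through the radial Sobolev embedding (Lemma \ref{Radial Sobolev embedding}, used in Proposition \ref{Virial/Morawetz estimate}), and the scattering criterion itself (Theorem \ref{Scattering criterion}, after Tao) is a radial statement. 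That is precisely why Theorem \ref{Scattering versus blowing-up or growing-up} (1) adds the hypothesis that $u_0$ and $V$ are radial; this paper never proves Hong's non-radial theorem, it quotes it from \cite{10}. Your text concedes the obstruction and then defers to ``a non-radial refinement'' or to ``the concentration--compactness and rigidity scheme,'' but neither is carried out, and the second is not a fallback one can wave at: it \emph{is} Hong's entire proof (a linear profile decomposition adapted to $e^{-it\mathcal{H}}$, comparison of nonlinear profiles with the potential-free equation, construction of a critical element, and a rigidity step in which $x\cdot\nabla V\leq0$ enters through a localized virial identity). As written, your argument proves at best the radial analogue, not the stated theorem.

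Two smaller corrections to the scheme itself. First, time-integrating the virial identity against the uniform bound on $z_R'$ gives smallness, along a time sequence, of the localized potential energy $\int_{|x|\leq R}|u|^{4}dx$ (as in Proposition \ref{Energy evacuation}), not of $\|\nabla u(t)\|_{L^2(|x|\leq R)}$; and in any case smallness of a localized gradient does not forbid mass concentration, so the bridge to the scattering criterion must go through H\"older applied to the localized $L^{4}$ norm, as in the proof of Theorem \ref{Scattering versus blowing-up or growing-up} (i). Second, your treatment of the potential tail $\int_{|x|>R}|x\cdot\nabla V|\,|u|^2dx$ via $\|x\cdot\nabla V\|_{L^{3/2}(|x|>R)}\rightarrow0$ is correct and matches \eqref{050}; that part of the argument is not where the radial hypothesis is needed.
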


Natural questions arise from this theorem. It is whether a range of the exponent $p$ for nonlinearity can be extend or not. Moreover, It is whether we can determine behaviors of a solution to (NLS$_V$) with a data $u_0$ satisfying $\|u_0\|_{L^2}\|\mathcal{H}^\frac{1}{2}u_0\|_{L^2}>\|Q\|_{L^2}\|\nabla Q\|_{L^2}$ or not. We state our main result.

\begin{theorem}[Scattering versus blowing-up or growing-up]\label{Scattering versus blowing-up or growing-up}
Let $\frac{7}{3}<p<5$ and $u_0\in H^1(\mathbb{R}^3)$. Suppose that $V$ satisfies $V\geq0$, and $x\cdot\nabla V\in L^\frac{3}{2}$. We also assume that
\begin{align}
M[u_0]^{1-s_c}E_V[u_0]^{s_c}<M[Q]^{1-s_c}E_0[Q]^{s_c}, \label{008}
\end{align}
where $s_c=\frac{3}{2}-\frac{2}{p-1}$.
\begin{itemize}
\item[(1)] (Scattering)\\
If $V\in\mathcal{K}_0\cap L^\frac{3}{2}$, $x\cdot\nabla V\leq0$, and
\begin{align*}
\|u_0\|_{L^2}^{1-s_c}\|\nabla u_0\|_{L^2}^{s_c}<\|Q\|_{L^2}^{1-s_c}\|\nabla Q\|_{L^2}^{s_c},
\end{align*}
then $(T_\text{min},T_\text{max})=\mathbb{R}$, that is, $u$ exists globally in time and
\begin{align}
\|u(t)\|_{L^2}^{1-s_c}\|\nabla u(t)\|_{L^2}^{s_c}<\|Q\|_{L^2}^{1-s_c}\|\nabla Q\|_{L^2}^{s_c}\label{055}
\end{align}
for any $t\in\mathbb{R}$. Moreover, if $u_0$ and $V$ are radial, then $u$ scatters.
\item[(2)] (Blowing-up or growing-up)\\
If ``$V\in\mathcal{K}_0\cap L^\frac{3}{2}$ or $V\in L^\sigma$ for some $\sigma>\frac{3}{2}$'', $2V+x\cdot\nabla V\geq0$, and
\begin{align*}
\|u_0\|_{L^2}^{1-s_c}\|\mathcal{H}^\frac{1}{2}u_0\|_{L^2}^{s_c}>\|Q\|_{L^2}^{1-s_c}\|\nabla Q\|_{L^2}^{s_c},
\end{align*}
then
\begin{align}
\|u(t)\|_{L^2}^{1-s_c}\|\mathcal{H}^\frac{1}{2}u(t)\|_{L^2}^{s_c}>\|Q\|_{L^2}^{1-s_c}\|\nabla Q\|_{L^2}^{s_c}\label{056}
\end{align}
for any $t\in(T_\text{min},T_\text{max})$ and $u$ blows-up or grows-up. Furthermore, if $x\cdot\nabla V\geq0$ and the following (i) or (ii) holds:
\begin{itemize}
\item[(i)] ``$u_0$ and $V$ are radially symmetric'' and $V\in L^\sigma$ for some $\sigma>\frac{3}{2}$,
\item[(ii)] $xu_0\in L^2$ and ``$V\in\mathcal{K}_0\cap L^\frac{3}{2}$ or $V\in L^\sigma$ for some $\sigma>\frac{3}{2}$'',
\end{itemize}
then $u$ blows-up.
\end{itemize}
\end{theorem}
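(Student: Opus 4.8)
The plan is to combine a variational characterization of the ground state with a continuity (trapping) argument, and then to run two separate machines: a Dodson--Murphy concentration-free argument for scattering and a (localized) virial argument for blow-up/grow-up. First I would set up the variational framework. Rewriting the energy as
\[
E_V[u]=\tfrac12\|\mathcal{H}^{\frac12}u\|_{L^2}^2-\tfrac{1}{p+1}\|u\|_{L^{p+1}}^{p+1},
\]
and noting that $V\geq0$ forces $\|\nabla u\|_{L^2}\leq\|\mathcal{H}^{\frac12}u\|_{L^2}$ and $E_0[u]\leq E_V[u]$, I would record the sharp Gagliardo--Nirenberg inequality $\|f\|_{L^{p+1}}^{p+1}\leq C_{GN}\|\nabla f\|_{L^2}^{\frac{3(p-1)}{2}}\|f\|_{L^2}^{\frac{5-p}{2}}$, whose optimizer is $Q$, together with the Pohozaev relations coming from \eqref{006}, which give $E_0[Q]=\frac{3p-7}{6(p-1)}\|\nabla Q\|_{L^2}^2$ and $\|Q\|_{L^2}^2=\frac{5-p}{3(p-1)}\|\nabla Q\|_{L^2}^2$. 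These identities rephrase the threshold \eqref{008} as the statement that the conserved energy lies strictly below the maximum of the one-variable function $f(y)=\tfrac12 y^2-\tfrac{C_{GN}}{p+1}M[u_0]^{(5-p)/4}y^{3(p-1)/2}$; this maximum is attained precisely because $\frac{3(p-1)}{2}>2\iff p>\frac73$, and after the correct scaling it equals the ground-state level.

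Second, I would run the trapping argument. For part (1), since $V\geq0$ gives $M[u_0]^{1-s_c}E_0[u_0]^{s_c}\leq M[u_0]^{1-s_c}E_V[u_0]^{s_c}<M[Q]^{1-s_c}E_0[Q]^{s_c}$, the data already sits below the potential-free threshold; feeding $E_0[u(t)]\leq E_V[u_0]$ and conservation of mass into the Gagliardo--Nirenberg bound $E_0[u(t)]\geq f(\|\nabla u(t)\|_{L^2})$, and using that $y\mapsto f(y)$ separates the sub- and super-threshold regions, a continuity-in-time argument shows $\|u(t)\|_{L^2}^{1-s_c}\|\nabla u(t)\|_{L^2}^{s_c}$ can never reach the threshold, yielding \eqref{055} for all $t$; the uniform bound on $\|\nabla u(t)\|_{L^2}$ then upgrades local to global well-posedness. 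For part (2), the same scheme run with $\|\mathcal{H}^{\frac12}u\|_{L^2}$ in place of $\|\nabla u\|_{L^2}$ and $E_V$ in place of $E_0$ traps the solution on the super-threshold side and gives \eqref{056}.

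Third, for the scattering claim in (1) I would follow Dodson--Murphy \cite{03}: establish a scattering criterion asserting that a global solution obeying \eqref{055} scatters once a suitable space-time (Morawetz) norm is finite, and then prove an interaction-Morawetz/virial estimate controlling this norm. Here the radiality of $u_0$ and $V$ and the sign condition $x\cdot\nabla V\leq0$ enter: they guarantee that the virial weight produces a coercive lower bound, the gradient term dominating the nonlinearity thanks to the strict inequality \eqref{055}, while the potential contribution $-\int(x\cdot\nabla V)|u|^2$ carries a favorable sign. Assembling this with the Strichartz and stability theory for $e^{-it\mathcal{H}}$ produces $\psi_+$.

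Fourth, for blow-up/grow-up in (2) I would compute the virial identity for (NLS$_V$): with $V_{\mathrm{vir}}(t)=\int|x|^2|u|^2$,
\[
V_{\mathrm{vir}}''(t)=8\|\nabla u\|_{L^2}^2-\tfrac{12(p-1)}{p+1}\|u\|_{L^{p+1}}^{p+1}-4\int(x\cdot\nabla V)|u|^2.
\]
Substituting $\|u\|_{L^{p+1}}^{p+1}$ via $E_V$ and invoking $2V+x\cdot\nabla V\geq0$ (which converts the two potential integrals into $(14-6p)\int V|u|^2\leq0$) yields the clean bound $V_{\mathrm{vir}}''(t)\leq(14-6p)\|\mathcal{H}^{\frac12}u\|_{L^2}^2+12(p-1)E_V[u]$, whose right-hand side is driven strictly below $-\delta<0$ uniformly in $t$ by \eqref{056} together with \eqref{008}. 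In the finite-variance case (ii) this uniform concavity forces a finite maximal time, and $x\cdot\nabla V\geq0$ ensures the potential integral is harmless, giving genuine blow-up; in the radial case (i) I would replace $V_{\mathrm{vir}}$ by a truncated weight $\int\chi_R|u|^2$ and control the truncation errors by the radial Gagliardo--Nirenberg/Strauss inequality, again using $x\cdot\nabla V\geq0$ to sign the potential error term, and then exclude grow-up as in \cite{16}, \cite{25}, \cite{26}. In the fully general case I would follow Du--Wu--Zhang \cite{04}: a localized virial combined with the uniform negativity of the coercive functional yields the dichotomy blow-up or grow-up. The main obstacles are exactly these last two points: the interaction-Morawetz estimate behind scattering, where the potential terms must be absorbed without destroying coercivity, and the exclusion of grow-up, which the localization in \cite{04} cannot achieve without the radial or finite-variance structure — this is precisely why the theorem can only assert the weaker ``blow-up or grow-up'' alternative in general.
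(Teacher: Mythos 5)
Your proposal is correct and follows essentially the same route as the paper: the Gagliardo--Nirenberg/Pohozaev trapping argument for \eqref{055} and \eqref{056} (Proposition \ref{Coercivity1} and Lemma \ref{Coercivity2}), the Dodson--Murphy scheme (a Tao-type scattering criterion combined with a radial virial/Morawetz estimate) for scattering, the Du--Wu--Zhang localized virial for the blow-up-or-grow-up dichotomy, and the Glassey convexity and Ogawa--Tsutsumi truncated-virial arguments in the finite-variance and radial cases, respectively. The only cosmetic deviations are that the paper's scattering criterion is phrased as smallness of the mass near the origin along a sequence of times (not finiteness of an interaction-Morawetz norm, the Morawetz estimate used being an ordinary localized one), and that the exclusion of grow-up in the radial case, which you cite as a black box, is in the paper a two-step bootstrap (first $\|\nabla u(t)\|_{L^2}^2\gtrsim t^2$, then an ODE comparison) rather than a one-line consequence.
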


\begin{corollary}\label{Mass-critical result}
We prove the similar blowing-up result in the mass-critical case $p=\frac{7}{3}$. We assume that the potential $V$ satisfies the same assumptions as in Theorem \ref{Scattering versus blowing-up or growing-up} (2). The initial data $u_0\in H^1$ satisfies $E_V[u_0]<0$ (instead of \eqref{008}). Then, the same conclusion as Theorem \ref{Scattering versus blowing-up or growing-up} (2) holds.
\end{corollary}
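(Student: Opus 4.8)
The plan is to run the concavity (virial) argument used for Theorem \ref{Scattering versus blowing-up or growing-up} (2), making only the one structural change forced by the degeneration $s_c=0$ at $p=\frac{7}{3}$. First I would record the mass-critical virial identity. With $I(t):=\int_{\mathbb{R}^3}|x|^2|u(t,x)|^2\,dx$ (or its radial/truncated analogue), the Morawetz computation for (NLS$_V$) in dimension $3$ gives
\begin{align*}
I''(t)=8\|\nabla u(t)\|_{L^2}^2-\frac{24}{5}\|u(t)\|_{L^{10/3}}^{10/3}-4\int_{\mathbb{R}^3}x\cdot\nabla V(x)|u(t,x)|^2\,dx .
\end{align*}
Because the nonlinear coefficient $\frac{24}{5}=\frac{16}{p+1}$ is exactly the one dictated by the mass-critical scaling, this rearranges into
\begin{align*}
I''(t)=16E_V[u(t)]-4\int_{\mathbb{R}^3}\bigl(2V(x)+x\cdot\nabla V(x)\bigr)|u(t,x)|^2\,dx ,
\end{align*}
every term being finite thanks to $x\cdot\nabla V\in L^{3/2}$ and $H^1\hookrightarrow L^6$.

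The point is that the variational/threshold step which is the heart of Theorem \ref{Scattering versus blowing-up or growing-up} (2) is no longer needed. By conservation of energy $E_V[u(t)]=E_V[u_0]<0$, and since $2V+x\cdot\nabla V\geq0$, the identity gives at once the uniform bound
\begin{align*}
I''(t)\leq16E_V[u_0]<0
\end{align*}
throughout the lifespan, with no continuity/bootstrap argument. I would also note that the conclusion \eqref{056} degenerates at $s_c=0$ to $\|u_0\|_{L^2}>\|Q\|_{L^2}$: since $V\geq0$ yields $E_0[u_0]\leq E_V[u_0]<0$, the sharp Gagliardo--Nirenberg inequality (with extremizer $Q$) forces $\|u_0\|_{L^2}>\|Q\|_{L^2}$, which mass conservation then propagates in time.

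With $I''(t)\leq16E_V[u_0]<0$ established, I would close the cases exactly as in Theorem \ref{Scattering versus blowing-up or growing-up} (2). In the finite-variance case (ii) no truncation is needed: $I(t)<\infty$, the untruncated identity already gives $I''(t)\leq16E_V[u_0]<0$ (using only $2V+x\cdot\nabla V\geq0$), and strict concavity forces $I$ to reach $0$ in finite time, contradicting $I\geq0$; hence $T_{\max}<\infty$. For the radial case (i) and for the general blow-up-or-grow-up dichotomy I would replace $|x|^2$ by a truncated weight $a_R$ and follow \cite{04}: were $u$ global with $\sup_t\|\nabla u(t)\|_{L^2}<\infty$, the truncated virial over a long interval yields a contradiction, so $u$ either grows up or blows up.

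The main obstacle, as in the supercritical proof, is controlling the localization errors once $|x|^2$ is truncated in case (i) and the dichotomy, namely the potential error $-2\int\frac{a_R'(r)}{r}(x\cdot\nabla V)|u|^2$ and the nonlinear and bi-Laplacian tails supported on $|x|>R$. Here the auxiliary hypotheses are exactly what is needed: the sign condition $x\cdot\nabla V\geq0$ together with $0\leq a_R'(r)/r\leq2$ keeps the potential error non-positive, the radial Strauss-type decay controls the nonlinear tail in (i), and in the dichotomy the a priori bound on $\|\nabla u\|_{L^2}$ lets one absorb all errors by taking $R$ large; the integrability of $V$ ($V\in\mathcal{K}_0\cap L^{3/2}$, or $V\in L^\sigma$ with $\sigma>\frac{3}{2}$) bounds the potential tails throughout. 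Since $E_V[u_0]<0$ already furnishes a fixed strictly negative bound, there is ample slack to absorb these errors, and the mass-supercritical argument carries over essentially verbatim.
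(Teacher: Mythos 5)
Your core reduction is exactly the paper's: the identity
\begin{align*}
I''(t)=16E_V[u(t)]-4\int_{\mathbb{R}^3}\bigl(2V+x\cdot\nabla V\bigr)|u|^2dx\leq16E_V[u_0]<0
\end{align*}
is precisely the paper's Lemma \ref{Lemma for mass-critical result} (multiplied by $8$ and combined with Proposition \ref{Finite variance}), i.e.\ negative energy replaces the coercivity Lemma \ref{Lemma3 for blows-up or grows-up}; your finite-variance case (Glassey concavity) and your blow-up-or-grow-up dichotomy (truncated virial under the a priori gradient bound, i.e.\ Lemmas \ref{Lemma1 for blows-up or grows-up} and \ref{Lemma2 for blows-up or grows-up} following \cite{04}) coincide with the paper's proof. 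Up to there the proposal is correct.

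The genuine gap is in the radial blow-up case (i). First, the argument you assign to it — truncate and ``follow \cite{04}'' — ends, by your own sentence, with ``$u$ either grows up or blows up''; that is the dichotomy, not the claimed conclusion $T_{\max}<\infty$. Second, your reason why the remaining errors are harmless fails exactly in this case: here there is \emph{no} a priori bound on $\|\nabla u(t)\|_{L^2}$ (ruling out grow-up is the whole point), and the radial Strauss bound on the nonlinear tail is
\begin{align*}
\int_{|x|\geq R}|u|^{p+1}dx\lesssim R^{-(p-1)}\|u\|_{L^2}^{\frac{p+3}{2}}\|\nabla u(t)\|_{L^2}^{\frac{p-1}{2}},
\end{align*}
which grows with $\|\nabla u(t)\|_{L^2}$; the fixed constant $16E_V[u_0]<0$ cannot absorb a time-growing error, so ``ample slack'' is not enough. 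In the mass-supercritical proof this is rescued by the leftover term $-(3p-7)\|\nabla u\|_{L^2}^2$ (the second stage of the paper's radial argument), but at $p=\tfrac{7}{3}$ one has $3p-7=0$, so that argument emphatically does \emph{not} carry over verbatim — the virial main term is exactly the conserved constant $16E_V[u_0]$ and produces no negative multiple of $\|\nabla u\|_{L^2}^2$. The missing idea is the Ogawa--Tsutsumi absorption \cite{17}, which the paper performs in the first stage of its radial proof: after an integration by parts in the radial variable, the tail is bounded by $\varepsilon\int\bigl(1-F''(\tfrac{r}{R})\bigr)|u'|^2dx+C_\varepsilon R^{-\frac{4(p-1)}{5-p}}M[u_0]^{\frac{p+3}{5-p}}$, and the $\varepsilon$-part is absorbed into the negative term $-4\int_{|x|\geq R}\bigl(1-F''(\tfrac{r}{R})\bigr)|u'|^2dx$ that the localized virial identity itself supplies (the potential tail is handled similarly using $x\cdot\nabla V\in L^{\frac32}$). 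This gives $I''(t)\leq8E_V[u_0]<0$ uniformly in $t$ for one fixed large $R$, and blow-up then follows from concavity and $I(t)=\int F_R|u|^2dx\geq0$ — the same mechanism you used in case (ii) — rather than from the two-stage ODE argument, which degenerates at the mass-critical exponent.
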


\begin{remark}
We comment the assumptions of $V$. If $V$ satisfies the condition of the blowing-up part, that is, $V$ is radial, $V\geq0$, and $x\cdot\nabla V\geq0$ implies $V\notin L^\frac{3}{2}$. Indeed, since $x\cdot\nabla V=rV'$, we have
\begin{align*}
2V+rV'\geq0\ 
	&\Leftrightarrow\frac{1}{V}\cdot\frac{dV}{dr}\geq-\frac{2}{r}\ \Rightarrow
\ \int_1^r\frac{1}{V}dV\geq-\int_1^r\frac{2}{r}dr\\
	&\Leftrightarrow\ \log V(r)-\log V(1)\geq-2\log r\ \Leftrightarrow\ 
\log V(r)\geq\log\frac{V(1)}{r^2}\ \Leftrightarrow\ V(r)\geq\frac{V(1)}{r^2}
\end{align*}
for any $r\geq1$. From this fact, we do not get the blowing-up result under $V\in\mathcal{K}_0\cap L^\frac{3}{2}$.
\end{remark}

\begin{remark}
We compare our result (Theorem \ref{Scattering versus blowing-up or growing-up}) with Hong's result (Theorem \ref{Hong theorem}). Theorem \ref{Scattering versus blowing-up or growing-up} extends a range of the exponent $p$ for nonlinearity. In Theorem \ref{Scattering versus blowing-up or growing-up}, it is assumed that $u_0$ and $V$ are radial in scattering part. We characterize sufficient condition of scattering by $\|\nabla u_0\|_{L^2}$ not $\|\mathcal{H}^\frac{1}{2}u_0\|_{L^2}$. Since $\|\nabla u_0\|_{L^2}\leq\|\mathcal{H}^\frac{1}{2}u_0\|_{L^2}$ holds by $V\geq0$, our result extends Theorem \ref{Hong theorem} in this point. Theorem \ref{Scattering versus blowing-up or growing-up} also contains a blowing-up or growing-up result and a blowing-up result.
\end{remark}

\subsection{Strategy and idea of proof}
　\\
　\\
Hong \cite{10} studied scattering to (NLS$_V$) without radially symmetry assumption via Kenig--Merle's type approach (linear profile decomposition, construction of critical solution, rigidity, and so). To simplify Hong's argument, we use the argument by Dodson--Murphy in \cite{03}. In order to use Dodson--Murphy's argument, we assume that $u_0$ and $V$ are radially symmetric. We characterize the sufficient condition of scattering with $\|\nabla u_0\|_{L^2}$ not $\|\mathcal{H}^\frac{1}{2}u_0\|_{L^2}$ by using $V\geq0$. More precisely, in this improvement, it is important that we deduce Proposition \ref{Coercivity1} (i) with $\|\nabla u_0\|_{L^2}$.\\
The proof of the blowing-up or growing-up result without a radially symmetric assumption is based on the argument by Du--Wu--Zhang \cite{04} with a time-independent estimate of a functional (see Lemma \ref{Lemma3 for blows-up or grows-up}).\\
The proof of a blowing-up result with the radially symmetric assumption is based on \cite[{\S}4.1]{16} and \cite[{\S}2]{25}. The argument is originally established by Ogawa--Tsutsumi \cite{17}. The proof of a blowing-up result with the finite variance assumption is based on \cite{26}.

\subsection{Known results}
　\\

In the past ten years, global behavior of solutions below the ground state for (NLS$_V$) was studied by several authors. First, we introduce results for (NLS$_V$) with $V=0$. Kenig--Merle in \cite{12} showed a scattering result and blowing-up result under $p=1+\frac{4}{N-2}$, $N=3,4,5$, and $u_0\in\dot{H}_\text{rad}^1(\mathbb{R}^N)$. Holmer--Roudenko in \cite{09} showed a scattering result and a blowing-up result under $p=3$, $u_0\in H_\text{rad}^1(\mathbb{R}^3)$. Duyckaerts--Holmer--Roudenko in \cite{05} removed the radial condition for the scattering result in \cite{09}. Fang--Xie--Cazenave in \cite{06} showed a scattering result, and Akahori--Nawa in \cite{01} showed a scattering result and a blowing-up or growing-up result in the mass-supercritical and energy-subcritical $1+\frac{4}{N}<p<1+\frac{4}{N-2}$, $u_0\in H^1(\mathbb{R}^N)$. Du--Wu--Zhang in \cite{04} showed a blowing-up or growing-up result under mass-supercritical and energy-subcritical. Dodson--Murphy in \cite{03} gave a new proof for the scattering result under the same setting as \cite{09}.\\
Next, we introduce results for (NLS$_V$) with $V\neq0$. Hong in \cite{10} showed a scattering result under $p=3$, $u_0\in H^1(\mathbb{R}^3)$, $V\in\mathcal{K}_0(\mathbb{R}^3)\cap L^\frac{3}{2}(\mathbb{R}^3)$, $V\geq0$, and $x\cdot\nabla V\leq0$. Killip--Murphy--Visan--Zheng in \cite{13} showed a scattering result and a blowing-up result under $p=3$, $u_0\in H^1(\mathbb{R}^3)$, $V=\frac{a}{|x|^2}$, and $a>-\frac{1}{4}$. Lu--Miao--Murphy in \cite{14} showed a scattering result and a blowing-up result under $1+\frac{4}{N}<p<1+\frac{4}{N-2}$, $3\leq N\leq6$, $u_0\in H^1(\mathbb{R}^N)$, $V=\frac{a}{|x|^2}$, and
\begin{equation}
\notag a>
\begin{cases}
&\hspace{-0.4cm}{-\frac{1}{4},\quad\left(N=3,\ \frac{7}{3}<p\leq3\right)},\\
&\hspace{-0.4cm}{-\frac{1}{4}+\left(\frac{1}{2}-\frac{1}{p-1}\right)^2,\quad(N=3,\ 3<p<5),}\\
&\hspace{-0.4cm}{-\left(\frac{N-2}{2}\right)^2+\left(\frac{N-2}{2}-\frac{1}{p-1}\right)^2,\quad(4\leq N\leq 6).}
\end{cases}
\end{equation}
Zheng in \cite{20} showed a scattering result under $1+\frac{4}{N}<p<1+\frac{4}{N-2}$, $N\geq3$, $u_0\in H_\text{rad}^1(\mathbb{R}^N)$, $V=\frac{a}{|x|^2}$, and $a>-\frac{1}{4}$ $(N=3\text{ and }\frac{7}{3}<p\leq3)$, $a>-\frac{1}{4}+(\frac{N-2}{2}-\frac{1}{p-1})^2$ $(N=3\text{ and }3<p<5)$, $a>-(\frac{N-2}{2})^2+(\frac{N-2}{2}-\frac{1}{p-1})^2$ $(N\geq4)$. Ikeda in \cite{11} showed a scattering result under $p>5$, $V$, $V'\in L_1^1(\mathbb{R}):=\{f\in L^1(\mathbb{R});(1+|x|)f\in L^1(\mathbb{R})\}$, $xV'\in L^1(\mathbb{R})+L^\infty(\mathbb{R})$, and $xV'\leq0$ and a blowing-up result under $p>5$, $V\in L^1(\mathbb{R})+L^\infty(\mathbb{R})$, and $xV'+2V\geq0$.

\subsection{Organization of the paper}
　\\

The organization of this paper is as follows. In section 2, we collect some definitions and some elementary tools. Also, we establish local well-posedness in $H^1$ of (NLS$_V$). In section 3, we prove the scattering result in Theorem \ref{Scattering versus blowing-up or growing-up}. In section 4, we prove the blowing-up or growing-up result in Theorem \ref{Scattering versus blowing-up or growing-up}. In section 5, we prove blowing-up result in Theorem \ref{Scattering versus blowing-up or growing-up}.

\section{Preliminaries}

In this section, we define some notations and collect some known tools. 

\subsection{Notation and definition}
　\\

For nonnegative $X$ and $Y$, we write $X\lesssim Y$ to denote $X\leq CY$ for some $C>0$. If $X\lesssim Y\lesssim X$ holds, we write $X\sim Y$. The dependence of implicit constants on parameters will be indicated by subscripts, e.g. $X\lesssim_uY$ denotes $X\leq CY$ for some $C=C(u)$. We write $a'\in[1,\infty]$ to denote the H\"older dual exponent to $a\in[1,\infty]$, that is, the solution $\frac{1}{a}+\frac{1}{a'}=1$.\\

For $1\leq p\leq\infty$, $L^p=L^p(\mathbb{R}^3)$ denotes the usual Lebesgue space. For a Banach space $X$, we use $L^q(I;X)$ to denote the Banach space of functions $f:I\times\mathbb{R}^3\longrightarrow\mathbb{C}$ whose norm is
\begin{align*}
\|f\|_{L^q(I;X)}:=\left(\int_I\|f(t)\|_X^qdt\right)^\frac{1}{q}<\infty,
\end{align*}
with the usual modificafion when $q=\infty$. We extend our notation as follows: If a time interval is not spacified, then the $t$-norm is evaluated over $(-\infty,\infty)$. To indicate a restriction to a time subinterval $I\subset(-\infty,\infty)$, we will write as $L^q(I)$.\\

We define the Fourier transform of $f$ on $\mathbb{R}^3$ by
\begin{align*}
\mathcal{F}f(\xi)=\widehat{f}(\xi):=\int_{\mathbb{R}^3}e^{-2\pi ix\cdot\xi}f(x)dx
\end{align*}
and define the inverse Fourier transform of $f$ on $\mathbb{R}^3$ by
\begin{align*}
\mathcal{F}^{-1}f(x)=\check{f}(x):=\int_{\mathbb{R}^3}e^{2\pi ix\cdot\xi}f(\xi)d\xi,
\end{align*}
where $x\cdot\xi$ denotes the usual inner product of $x$ and $\xi$ on $\mathbb{R}^3$.\\

$W^{s,p}=(1-\Delta)^{-\frac{s}{2}}L^p$ and $\dot{W}^{s,p}=(-\Delta)^{-\frac{s}{2}}L^p$ are the inhomogeneous Sobolev space and the homogeneous Sobolev space, respectively for $s\in\mathbb{R}$ and $p\in[1,\infty]$, where $(1-\Delta)^\frac{s}{2}=\mathcal{F}^{-1}(1+4\pi^2|\xi|^2)^\frac{s}{2}\mathcal{F}$ and $(-\Delta)^\frac{s}{2}=\mathcal{F}^{-1}(2\pi|\xi|)^s\mathcal{F}$, respectively. When $p=2$, we express $W^{s,2}=H^s$ and $\dot{W}^{s,2}=\dot{H}^s$. We also define Sobolev space with a potential by $W_V^{s,p}=(1+\mathcal{H})^{-\frac{s}{2}}L^p$ and $\dot{W}_V^{s,p}=\mathcal{H}^{-\frac{s}{2}}L^p$ for $s\in\mathbb{R}$ and $p\in[1,\infty]$. When $p=2$, we express $W_V^{s,2}=H_V^s$ and $\dot{W}_V^{s,2}=\dot{H}_V^s$.\\

We introduce a cutoff function which is used throughout this paper. We define
\begin{equation}
\notag \chi(x)=
\begin{cases}
&\hspace{0.1cm}1\hspace{1.1cm}(0\leq|x|\leq\frac{1}{2}),\\
&\hspace{-0.4cm}\text{smooth}\hspace{0.55cm}(\frac{1}{2}\leq|x|\leq1),\\
&\hspace{0.1cm}0\hspace{1.1cm}(1\leq|x|)
\end{cases}
\end{equation}
and define
\begin{align}
\chi_R(x)=\chi\left(\frac{x}{R}\right)\label{072}
\end{align}
for $R>0$.

\subsection{Some tools}

\begin{lemma}[Norm equivalence, \cite{10}]\label{Norm equivalence}
If $V\in\mathcal{K}_0\cap L^\frac{3}{2}$ and $\|V_-\|_{\mathcal{K}}<4\pi$, then
\begin{align*}
\|\mathcal{H}^\frac{s}{2}f\|_{L^r}\sim\|f\|_{\dot{W}^{s,r}},\ \ \ \|(1+\mathcal{H})^\frac{s}{2}f\|_{L^r}\sim\|f\|_{W^{s,r}},
\end{align*}
where $1<r<\frac{3}{s}$ and $0\leq s\leq2$.
\end{lemma}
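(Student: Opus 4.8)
The plan is to reduce the two claimed equivalences to the $L^r$-boundedness of the four comparison operators $\mathcal{H}^{s/2}(-\Delta)^{-s/2}$, $(-\Delta)^{s/2}\mathcal{H}^{-s/2}$, and their inhomogeneous analogues built from $1+\mathcal{H}$ and $1-\Delta$. Once these are bounded on $L^r$ in the stated range, writing $g=(-\Delta)^{s/2}f$ in one case and $g=\mathcal{H}^{s/2}f$ in the other immediately yields both $\|\mathcal{H}^{s/2}f\|_{L^r}\lesssim\|f\|_{\dot W^{s,r}}$ and $\|f\|_{\dot W^{s,r}}\lesssim\|\mathcal{H}^{s/2}f\|_{L^r}$, hence the asserted $\sim$; similarly for the inhomogeneous pair. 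Thus the whole lemma becomes a statement about a few bounded operators.

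The analytic heart of the matter, and the step I expect to be the main obstacle, is the following input on $\mathcal{H}$ under $V\in\mathcal{K}_0\cap L^{3/2}$ and $\|V_-\|_{\mathcal{K}}<4\pi$: the heat semigroup obeys the Gaussian upper bound $0\le e^{-t\mathcal{H}}(x,y)\lesssim t^{-3/2}e^{-c|x-y|^2/t}$, dominated by the free heat kernel. The role of the smallness condition is transparent here, since $(-\Delta)^{-1}$ has kernel $\tfrac1{4\pi|x-y|}$, so $\|V_-\|_{\mathcal{K}}<4\pi$ says exactly that $(-\Delta)^{-1}V_-$ has operator norm $<1$ on $L^\infty$, which drives the Feynman--Kac/Neumann-series derivation of the Gaussian bound. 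Granting this, standard spectral-multiplier theory for operators with Gaussian heat-kernel bounds gives bounded imaginary powers $\|\mathcal{H}^{i\tau}\|_{L^r\to L^r}\lesssim(1+|\tau|)^{k}$ for $1<r<\infty$, and shows that $\mathcal{H}^{-1}$ has a kernel dominated by $|x-y|^{-1}$, so $\mathcal{H}^{-1}:L^r\to L^{r^*}$ with $\tfrac1{r^*}=\tfrac1r-\tfrac23$. The classical Mikhlin theorem supplies the analogous imaginary-power bounds for $-\Delta$.

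With these ingredients I would run Stein's complex interpolation on the analytic family $T_z:=\mathcal{H}^{z}(-\Delta)^{-z}$ over the strip $0\le\operatorname{Re}z\le1$. On the line $\operatorname{Re}z=0$ the operator $T_{i\tau}=\mathcal{H}^{i\tau}(-\Delta)^{-i\tau}$ is bounded on every $L^{p_0}$, $1<p_0<\infty$, as a product of bounded imaginary powers with admissible growth in $\tau$. On the line $\operatorname{Re}z=1$ I factor $T_{1+i\tau}=\mathcal{H}^{i\tau}\bigl(\mathcal{H}(-\Delta)^{-1}\bigr)(-\Delta)^{-i\tau}$ and reduce to the middle factor $\mathcal{H}(-\Delta)^{-1}=I+V(-\Delta)^{-1}$; by H\"older with $V\in L^{3/2}$ and the Sobolev mapping $(-\Delta)^{-1}:L^{p_1}\to L^{p_1^*}$, $\tfrac1{p_1^*}=\tfrac1{p_1}-\tfrac23$, this is bounded on $L^{p_1}$ precisely for $1<p_1<\tfrac32$. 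Taking $\theta=s/2$ and optimizing over $p_0\in(1,\infty)$, $p_1\in(1,\tfrac32)$ in $\tfrac1r=(1-\tfrac s2)\tfrac1{p_0}+\tfrac s2\tfrac1{p_1}$, the achievable exponents are exactly $\tfrac s3<\tfrac1r<1$, i.e.\ $1<r<\tfrac3s$; this is how the stated range of $r$ is forced out of the endpoint constraints.

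The reverse operator $(-\Delta)^{z}\mathcal{H}^{-z}$ is handled identically, now factoring $(-\Delta)^{i\tau}\bigl((-\Delta)\mathcal{H}^{-1}\bigr)\mathcal{H}^{-i\tau}$ and using $(-\Delta)\mathcal{H}^{-1}=I-V\mathcal{H}^{-1}$ together with the Gaussian-bound mapping $\mathcal{H}^{-1}:L^{p_1}\to L^{p_1^*}$ to control the $\operatorname{Re}z=1$ endpoint. Finally, the inhomogeneous equivalence $\|(1+\mathcal{H})^{s/2}f\|_{L^r}\sim\|f\|_{W^{s,r}}$ follows by the same scheme with $\mathcal{H}$, $-\Delta$ replaced by $1+\mathcal{H}$, $1-\Delta$, where the spectral gap at $1$ makes the resolvent endpoints even more straightforward. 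In short, the genuinely hard part is the Gaussian heat-kernel bound for the Kato-class operator $\mathcal{H}$, and everything downstream is interpolation plus H\"older--Sobolev bookkeeping.
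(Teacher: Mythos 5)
The paper itself gives no proof of this lemma---it is quoted verbatim from Hong \cite{10}---and your proposal reconstructs essentially the argument given there: Gaussian heat-kernel bounds for $e^{-t\mathcal{H}}$ under the Kato-class condition $\|V_-\|_{\mathcal{K}}<4\pi$, the resulting spectral-multiplier bounds for the imaginary powers $\mathcal{H}^{i\tau}$, and Stein complex interpolation for $\mathcal{H}^{z}(-\Delta)^{-z}$ (and its inverse-ordered companion) between the line $\mathrm{Re}\,z=0$ and the line $\mathrm{Re}\,z=1$, where $\mathcal{H}(-\Delta)^{-1}=I+V(-\Delta)^{-1}$ and $(-\Delta)\mathcal{H}^{-1}=I-V\mathcal{H}^{-1}$ are controlled by H\"older with $V\in L^{3/2}$ plus Hardy--Littlewood--Sobolev, which is exactly what forces the range $1<r<\frac{3}{s}$. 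Your proof is correct and takes essentially the same route as the cited source, so there is nothing to flag.
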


\begin{lemma}[Sobolev inequality, \cite{10}]\label{Sobolev inequality}
If $V\in\mathcal{K}_0\cap L^\frac{3}{2}$ and $\|V_-\|_{\mathcal{K}}<4\pi$, then
\begin{align*}
\|f\|_{L^q}\lesssim\|\mathcal{H}^\frac{s}{2}f\|_{L^p},\ \ \ \|f\|_{L^q}\lesssim\|(1+\mathcal{H})^\frac{s}{2}f\|_{L^p},
\end{align*}
where $1<p<q<\infty$, $1<p<\frac{3}{s}$, $0\leq s\leq2$ and $\frac{1}{q}=\frac{1}{p}-\frac{s}{3}$.
\end{lemma}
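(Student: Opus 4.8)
The plan is to obtain both inequalities by composing the classical (potential-free) Sobolev embedding with the norm equivalence in Lemma \ref{Norm equivalence}. The entire content is a concatenation of two known estimates, so essentially the only thing to verify is that the admissible exponent ranges of the two inputs coincide. Since $s=0$ forces $q=p$ and the statement is then trivial, I would assume $0<s\le 2$ throughout.

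First I would invoke the classical Sobolev inequality on $\mathbb{R}^3$: for $0<s\le 2$ and $1<p<q<\infty$ with $\frac{1}{q}=\frac{1}{p}-\frac{s}{3}$ (equivalently $1<p<\frac{3}{s}$, so the gain of integrability is admissible), one has
\[
\|f\|_{L^q}\lesssim\|f\|_{\dot{W}^{s,p}},\qquad \|f\|_{L^q}\lesssim\|f\|_{W^{s,p}}.
\]
The homogeneous bound is the Hardy--Littlewood--Sobolev estimate for the Riesz potential $(-\Delta)^{-s/2}$, and the inhomogeneous one follows in the same manner (or by dominating the Bessel potential by the Riesz potential together with $L^p$ control).

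Next I would apply Lemma \ref{Norm equivalence} with $r=p$. The hypotheses $V\in\mathcal{K}_0\cap L^\frac{3}{2}$ and $\|V_-\|_{\mathcal{K}}<4\pi$ are in force, and the exponent constraints $1<p<\frac{3}{s}$, $0\le s\le 2$ are exactly those required there; hence
\[
\|f\|_{\dot{W}^{s,p}}\sim\|\mathcal{H}^\frac{s}{2}f\|_{L^p},\qquad \|f\|_{W^{s,p}}\sim\|(1+\mathcal{H})^\frac{s}{2}f\|_{L^p}.
\]
Chaining these equivalences with the Sobolev inequalities of the previous step yields $\|f\|_{L^q}\lesssim\|\mathcal{H}^\frac{s}{2}f\|_{L^p}$ and $\|f\|_{L^q}\lesssim\|(1+\mathcal{H})^\frac{s}{2}f\|_{L^p}$, which is the assertion.

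I do not anticipate a genuine obstacle here: the whole argument reduces to transferring the potential-free embedding through the norm equivalence. The one point deserving explicit care is the compatibility of the exponent ranges, and the common constraint $1<p<\frac{3}{s}$ is precisely what makes both the classical embedding and Lemma \ref{Norm equivalence} simultaneously applicable, so once that alignment is recorded the proof is complete.
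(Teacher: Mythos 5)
Your proof is correct. The paper itself gives no argument for this lemma (it is quoted directly from \cite{10}), and your derivation—the classical Hardy--Littlewood--Sobolev/Bessel-potential embedding composed with Lemma \ref{Norm equivalence} at $r=p$, noting that the constraint $1<p<\frac{3}{s}$ makes the two exponent ranges coincide—is precisely the standard route by which the cited reference obtains it, so nothing further is needed.
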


\begin{proposition}[Gagliardo-Nirenberg inequliaty without a potential, \cite{23}]
Let $1\leq p\leq5$. For $f\in H^1(\mathbb{R}^3)$, the estimate
\begin{align}
\|f\|_{L^{p+1}}^{p+1}\leq C_\text{GN}\|f\|_{L^2}^{\frac{5-p}{2}}\|\nabla f\|_{L^2}^{\frac{3(p-1)}{2}} \label{005}
\end{align}
holds, where $C_\text{GN}$ is the sharp constant and $C_\text{GN}$ is attained by the ground state $Q$ (defined by \eqref{006}), that is,
\begin{align*}
C_\text{GN}=\frac{\|Q\|_{L^{p+1}}^{p+1}}{\|Q\|_{L^2}^{\frac{5-p}{2}}\|\nabla Q\|_{L^2}^{\frac{3(p-1)}{2}}}.
\end{align*}
\end{proposition}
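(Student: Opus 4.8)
The plan is to realize $C_\text{GN}$ as the reciprocal of the optimal constant in a variational problem and to identify the extremizer with the ground state $Q$ through the Euler--Lagrange equation together with the uniqueness result already cited. First I would record that \eqref{005} holds with \emph{some} finite constant: interpolating via $\|f\|_{L^{p+1}}\leq\|f\|_{L^2}^{1-\theta}\|f\|_{L^6}^{\theta}$ with $\tfrac{1}{p+1}=\tfrac{1-\theta}{2}+\tfrac{\theta}{6}$ (so $\theta\in[0,1]$ precisely when $1\leq p\leq5$) and applying the Sobolev embedding $\|f\|_{L^6}\lesssim\|\nabla f\|_{L^2}$ yields \eqref{005} with a non-sharp constant. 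The exponents $\tfrac{5-p}{2}$ and $\tfrac{3(p-1)}{2}$ on the right are then forced by the two-parameter scaling $f\mapsto\mu f(\lambda\,\cdot)$: matching the powers of $\mu$ and of $\lambda$ on the two sides gives the linear system $a+b=p+1$, $3a+b=6$ for the unknown exponents $a,b$, whose unique solution is exactly $a=\tfrac{5-p}{2}$, $b=\tfrac{3(p-1)}{2}$.

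Next I would introduce the Weinstein functional
\[
W(f):=\frac{\|\nabla f\|_{L^2}^{\frac{3(p-1)}{2}}\,\|f\|_{L^2}^{\frac{5-p}{2}}}{\|f\|_{L^{p+1}}^{p+1}},
\]
so that $C_\text{GN}=\bigl(\inf_{0\neq f\in H^1}W(f)\bigr)^{-1}$, and note that the same scaling computation shows $W$ is invariant under $f\mapsto\mu f(\lambda\,\cdot)$. This invariance lets me normalize $\|f\|_{L^2}=\|\nabla f\|_{L^2}=1$ along a minimizing sequence, reducing the problem to maximizing $\|f\|_{L^{p+1}}$ under two fixed constraints.

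The heart of the argument, and the step I expect to be the main obstacle, is producing an actual minimizer. Using the P\'olya--Szeg\H{o} inequality together with the standard bound $\|\nabla|f|\|_{L^2}\leq\|\nabla f\|_{L^2}$, I may replace the minimizing sequence by one consisting of nonnegative, radially symmetric decreasing functions without increasing $W$. The genuine difficulty is the loss of compactness created by the translation and scaling symmetries; I would resolve it by the concentration--compactness principle, excluding vanishing (which would drive $\|f\|_{L^{p+1}}\to0$, contradicting minimality) and dichotomy (via strict subadditivity of the constrained infimum), so that, after a translation and the normalizing rescaling, the sequence converges strongly in $H^1$ to a nonzero radial limit $\phi$ attaining the infimum.

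Finally I would identify $\phi$ with $Q$. Computing the first variation of $W$ shows that $\phi$ solves an equation of the form $a\Delta\phi-b\phi+c\phi^{p}=0$ with $a,b,c>0$ determined by the normalization, and a last rescaling $\phi\mapsto\alpha\,\phi(\beta\,\cdot)$ puts it exactly in the form \eqref{006}. Since $\phi$ is nonnegative and radial, the existence result of \cite{02} and the uniqueness result of \cite{15} force $\phi=Q$. Evaluating $W$ at $Q$ then gives $\inf W=W(Q)$, which upon taking reciprocals is precisely the stated formula for $C_\text{GN}$.
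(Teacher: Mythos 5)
The paper offers no proof of this proposition at all: it is quoted as a known result, with the citation to Weinstein \cite{23} standing in for the argument. Your proposal is, in essence, a reconstruction of exactly that cited proof --- the variational argument built on the functional $W$ --- and it is sound: the interpolation-plus-Sobolev step (with $\theta\in[0,1]$ iff $1\leq p\leq5$), the scaling computation that forces the exponents $\tfrac{5-p}{2}$ and $\tfrac{3(p-1)}{2}$, the reduction to a normalized minimizing sequence, and the Euler--Lagrange identification of the minimizer with $Q$ via the existence result of \cite{02} and Kwong's uniqueness theorem \cite{15} are all correct. Two refinements are worth making. First, the concentration--compactness machinery you flag as the main obstacle is heavier than needed: once you have symmetrized to nonnegative radial nonincreasing functions, translations are already quotiented out, and the Strauss compact embedding $H^1_{\text{rad}}(\mathbb{R}^3)\hookrightarrow L^{p+1}(\mathbb{R}^3)$ (valid for $2<p+1<6$) gives strong $L^{p+1}$ convergence of a subsequence; weak lower semicontinuity of the $L^2$ and $\dot{H}^1$ norms then yields the minimizer directly, with no need to exclude dichotomy via strict subadditivity --- this is in fact how \cite{23} proceeds. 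Second, your attainment argument (and the statement itself) genuinely requires the open range $1<p<5$: at $p=5$ the radial embedding is no longer compact and, by Pohozaev's identity, the equation $-Q+\Delta Q+Q^{5}=0$ has no nontrivial $H^1$ solution (the sharp constant there is the Sobolev constant, attained by Aubin--Talenti profiles, not by a ground state), while at $p=1$ the equation degenerates to $\Delta Q=0$. This defect is inherited from the paper's hypothesis ``$1\leq p\leq5$'', but your write-up should state explicitly that the extremizer part is proved only for $1<p<5$; you should also note that the limit $\phi$ is strictly positive (strong maximum principle, after elliptic regularity) before Kwong's uniqueness can be invoked.
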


\begin{theorem}[Dispersive estimate, \cite{10}]\label{Dispersive estimate}
If $V\in\mathcal{K}_0\cap L^\frac{3}{2}$ and $\|V_-\|_\mathcal{K}<4\pi$, then
\begin{align*}
\|e^{-it\mathcal{H}}f\|_{L^\infty}\lesssim\frac{1}{|t|^{\frac{3}{2}}}\|f\|_{L^1}.
\end{align*}
\end{theorem}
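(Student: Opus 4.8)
The plan is to obtain the bound as a perturbation of the free Schrödinger flow. The starting point is the classical free dispersive estimate $\|e^{it\Delta}f\|_{L^\infty}\lesssim|t|^{-\frac{3}{2}}\|f\|_{L^1}$ on $\mathbb{R}^3$, which is immediate from the explicit convolution kernel $(4\pi it)^{-\frac{3}{2}}e^{i|x-y|^2/4t}$. The objective is to transfer this $|t|^{-\frac{3}{2}}$ decay to $e^{-it\mathcal{H}}$, and the only obstruction to doing so is spectral: one needs $\mathcal{H}$ to have no eigenvalue and no zero-energy resonance. This is exactly what the hypotheses $V\in\mathcal{K}_0\cap L^\frac{3}{2}$ and $\|V_-\|_{\mathcal{K}}<4\pi$ supply — indeed the non-negativity of $\mathcal{H}$ established in the Introduction already excludes negative eigenvalues and the threshold resonance.

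First I would represent the propagator through the Stone formula, writing
$$e^{-it\mathcal{H}}f=\frac{1}{2\pi i}\int_0^\infty e^{-it\lambda}\big[R_V(\lambda+i0)-R_V(\lambda-i0)\big]f\,d\lambda,$$
where $R_V(z)=(\mathcal{H}-z)^{-1}$ and $R_0(z)=(-\Delta-z)^{-1}$. The resolvent identity and iteration yield the Born series $R_V=\sum_{n\ge0}R_0(-VR_0)^n$, so that $e^{-it\mathcal{H}}$ becomes a sum of oscillatory integrals whose $n$-th term is an $n$-fold spatial convolution alternating the free resolvent kernel and multiplication by $V$. After the substitution $\lambda=\mu^2$ the free resolvent kernel takes the explicit form $\frac{e^{\pm i\mu|x-y|}}{4\pi|x-y|}$, which is what makes the per-term analysis tractable.

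The core of the proof is a per-term oscillatory-integral estimate together with a summation argument. For each term one carries out the $\mu$-integration by stationary phase and integration by parts, extracting the desired factor $|t|^{-\frac{3}{2}}$ uniformly in the spatial variables; the remaining spatial integrals are then controlled by the global Kato norm. Since $\frac{1}{4\pi|x-y|}$ is precisely the convolution kernel that appears, each insertion of $V$ costs a factor comparable to $\|V\|_{\mathcal{K}}/(4\pi)$, so the series converges geometrically. The condition $\|V_-\|_{\mathcal{K}}<4\pi$ is exactly what renders this ratio subcritical while simultaneously excluding the threshold resonance. This is the scheme developed by Beceanu--Goldberg for scaling-critical Kato potentials and specialized to the present hypotheses in \cite{10}.

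The hardest part will be the low-energy analysis near $\mu=0$, where the oscillatory factor provides no decay and the free resolvent is singular, so that naive integration by parts is unavailable. Here one must instead exploit the cancellation in the difference $R_V(\lambda+i0)-R_V(\lambda-i0)$ and the invertibility of $I+R_0(0)V$ on suitable weighted spaces, which again reduces to the no-resonance condition. Showing that the summed low-energy contribution still decays like $|t|^{-\frac{3}{2}}$ rather than more slowly is the technical heart of the argument; by contrast, the high-energy region is handled routinely by integration by parts once the Born series is known to converge.
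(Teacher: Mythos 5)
The paper itself offers no proof of this theorem: it is imported verbatim from Hong \cite{10}, who in turn reduces it to the Beceanu--Goldberg dispersive estimate for Kato-class potentials after verifying its spectral hypotheses. Your outline correctly identifies that framework (resolvent representation, free resolvent kernel $\frac{e^{\pm i\mu|x-y|}}{4\pi|x-y|}$, Kato-norm bookkeeping, exclusion of the zero resonance), but its central step fails. You assert that each insertion of $V$ in the Born series costs a factor comparable to $\|V\|_{\mathcal{K}}/(4\pi)$ and that the hypothesis $\|V_-\|_{\mathcal{K}}<4\pi$ ``is exactly what renders this ratio subcritical.'' It is not: the hypothesis bounds only the \emph{negative part} of $V$, while the per-insertion cost involves the full Kato norm, which can be arbitrarily large under the stated assumptions. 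For example, $V=c\chi_{B_1}$ with $c\gg1$ satisfies $V\in\mathcal{K}_0\cap L^{3/2}$ and $V_-=0$, yet $\|V\|_{\mathcal{K}}=2\pi c$. Worse, in this paper's actual application one has $V\geq0$, so $\|V_-\|_{\mathcal{K}}=0$ and the hypothesis carries no size information about $V$ at all; the geometric summation then diverges (by these bounds) precisely in the regime the theorem must cover. Summing the Born series geometrically is the Rodnianski--Schlag \emph{small-potential} argument and genuinely requires $\|V\|_{\mathcal{K}}<4\pi$; it cannot be patched to yield the present statement. The entire point of Beceanu--Goldberg is to dispense with the series: one proves that the relevant operator $I+\widehat{T}(\lambda)$ is invertible in an operator-valued Wiener algebra via an abstract Wiener theorem (pointwise invertibility, supplied by the spectral assumptions, combined with a local-approximation/compactness argument), and this is what accommodates potentials of arbitrary Kato norm. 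The condition $\|V_-\|_{\mathcal{K}}<4\pi$ enters only spectrally, through positivity of the quadratic form; it plays no role in any convergence of a perturbation series.

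A secondary gap: you claim the non-negativity of $\mathcal{H}$ excludes ``no eigenvalue and no zero-energy resonance.'' Form positivity rules out negative eigenvalues, a zero eigenvalue, and a zero resonance, but it says nothing about \emph{embedded positive} eigenvalues, which must also be excluded before the Beceanu--Goldberg theorem applies without a continuous-spectrum projection (note the statement here has no $P_c$). That exclusion is where the assumption $V\in L^{3/2}$ is actually used in \cite{10}, via Carleman-estimate results of Ionescu--Jerison/Koch--Tataru type; your sketch never addresses it.
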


\begin{definition}[$\dot{H}^s$\,admissible and Strichartz norm]
We say that a pair of exponents $(q,r)$ is called $\dot{H}^s$\,admissible in three dimensions if $2\leq q,r\leq\infty$ and
\begin{align*}
\frac{2}{q}+\frac{3}{r}=\frac{3}{2}-s.
\end{align*}
We define Strichartz norm by
\begin{align*}
\|u\|_{S(L^2)}:=\sup_{\substack{(q,r):L^2 \text{admissible}\\ 2\leq q\leq\infty, 2\leq r\leq 6}}\|u\|_{L^qL^r}
\end{align*}
and its dual norm by
\begin{align*}
\|u\|_{S'(L^2)}:=\inf_{\substack{(q,r):L^2 \text{admissible}\\2\leq q\leq \infty, 2\leq r\leq 6}}\|u\|_{L^{q'}L^{r'}}.
\end{align*}
\end{definition}

\begin{theorem}[Strichartz estimate, \cite{07}, \cite{21}]\label{Strichartz estimate}
If $V\in\mathcal{K}_0\cap L^\frac{3}{2}$ and $\|V_-\|_{\mathcal{K}}<4\pi$, then the following estimates hold.
\begin{itemize}
\item (Homogeneous estimates)
\begin{align}
\|e^{-it\mathcal{H}}f\|_{S(L^2)}\lesssim\|f\|_{L^2}. \label{010}
\end{align}
If $(q,r)$ is $\dot{H}^{s_c}$\,admissible and is in a set $\Lambda_{s_c}$ defined as
\begin{equation*}
\Lambda_{s_c}:=
\begin{cases}
&\hspace{-0.4cm}\displaystyle{\left\{(q,r)\,;\ 2\leq q\leq \infty,\ \frac{6}{3-2s_c}\leq r\leq\frac{6}{1-2s_c}\ \,\right\}\ \left(0<s_c<\frac{1}{2}\right),}\\[0.4cm]
&\hspace{-0.4cm}\displaystyle{\left\{(q,r)\,;\ \frac{4}{3-2s_c}<q\leq\infty,\ \frac{6}{3-2s_c}\leq r<\infty\right\}\ \left(\frac{1}{2}\leq s_c<1\right),}
\end{cases}
\end{equation*}
then
\begin{align}
\|e^{-it\mathcal{H}}f\|_{L^qL^r}\lesssim\|f\|_{\dot{H}^{s_c}}. \label{012}
\end{align}
\item (Inhomogeneous estimates)
\begin{align}
\left\|\int_0^te^{-i(t-s)\mathcal{H}}F(\cdot,s)ds\right\|_{S(L^2)}\lesssim\|F\|_{S'(L^2)}. \label{011}
\end{align}
If $(q,r)$ is $\dot{H}^{s_c}$\,admissible and is in a set $\Lambda_{s_c}$, then
\begin{align}
\left\|\int_0^te^{-i(t-s)\mathcal{H}}F(\cdot,s)ds\right\|_{L^qL^r}\lesssim\||\nabla|^{s_c}F\|_{S'(L^2)}, \label{013}
\end{align}
where implicit constants are independent of $f$ and $F$. Even if time is restricted, Theorem \ref{Strichartz estimate} still holds.
\end{itemize}
\end{theorem}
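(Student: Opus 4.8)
The plan is to reduce everything to the two basic bounds satisfied by the propagator $e^{-it\mathcal{H}}$ and then invoke the abstract machinery of Keel--Tao together with the potential-adapted inequalities of Lemmas \ref{Norm equivalence} and \ref{Sobolev inequality}. First I would record the two endpoints of the interpolation: the $L^2$ conservation $\|e^{-it\mathcal{H}}f\|_{L^2}=\|f\|_{L^2}$, which holds because $\mathcal{H}$ is self-adjoint and hence $\{e^{-it\mathcal{H}}\}_t$ is unitary on $L^2$ by Stone's theorem, and the dispersive bound $\|e^{-it\mathcal{H}}f\|_{L^\infty}\lesssim|t|^{-3/2}\|f\|_{L^1}$ of Theorem \ref{Dispersive estimate}. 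Interpolating between these yields the decay estimate
\begin{align*}
\|e^{-it\mathcal{H}}f\|_{L^r}\lesssim|t|^{-3\left(\frac{1}{2}-\frac{1}{r}\right)}\|f\|_{L^{r'}},\qquad 2\leq r\leq\infty.
\end{align*}

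Next I would feed these into the abstract Strichartz theorem of Keel--Tao, whose hypotheses are precisely the $L^2$ boundedness of $e^{-it\mathcal{H}}$ together with the $|t-s|^{-3/2}$ decay of $e^{-it\mathcal{H}}\bigl(e^{-is\mathcal{H}}\bigr)^\ast=e^{-i(t-s)\mathcal{H}}$ from $L^1$ to $L^\infty$. In three dimensions the decay exponent $3/2$ makes the admissibility relation $\frac{2}{q}+\frac{3}{r}=\frac{3}{2}$ coincide with $L^2$-admissibility, so Keel--Tao delivers $\|e^{-it\mathcal{H}}f\|_{L^qL^r}\lesssim\|f\|_{L^2}$ for every $L^2$-admissible $(q,r)$, including the endpoint $(q,r)=(2,6)$, which is exactly \eqref{010}. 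The same theorem produces the untruncated inhomogeneous estimate for $\int_{\mathbb{R}}e^{-i(t-s)\mathcal{H}}F(s)\,ds$ against any pair of dual admissible exponents; to pass from this to the retarded integral $\int_0^t$ in \eqref{011} I would apply the Christ--Kiselev lemma, which handles the truncation to $\{s<t\}$ away from the double endpoint, the remaining endpoint case being covered directly by the Keel--Tao retarded estimate. This gives \eqref{011}.

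It then remains to upgrade the $L^2$-level bounds to the $\dot{H}^{s_c}$-level bounds \eqref{012} and \eqref{013}, and here the strategy is to trade regularity for integrability. Writing $g=\mathcal{H}^{s_c/2}f$, so that $\|f\|_{\dot{H}^{s_c}}\sim\|g\|_{L^2}$ by Lemma \ref{Norm equivalence}, and using that $\mathcal{H}^{s_c/2}$ commutes with $e^{-it\mathcal{H}}$ through the functional calculus, one has $e^{-it\mathcal{H}}f=\mathcal{H}^{-s_c/2}e^{-it\mathcal{H}}g$. Choosing an $L^2$-admissible pair $(q,\tilde r)$ and applying Lemma \ref{Sobolev inequality} to move $\mathcal{H}^{-s_c/2}$ from $L^{\tilde r}$ into $L^r$ with $\frac{1}{r}=\frac{1}{\tilde r}-\frac{s_c}{3}$ converts the $L^2$-admissible homogeneous estimate into $\|e^{-it\mathcal{H}}f\|_{L^qL^r}\lesssim\|f\|_{\dot{H}^{s_c}}$; the relation between $r$ and $\tilde r$ is exactly what turns $L^2$-admissibility of $(q,\tilde r)$ into $\dot{H}^{s_c}$-admissibility of $(q,r)$, and the set $\Lambda_{s_c}$ records those $r$ for which the auxiliary $\tilde r$ stays in the valid admissible window and the Sobolev embedding applies. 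The inhomogeneous estimate \eqref{013} follows by the same exchange applied to the Duhamel term, with the derivative $|\nabla|^{s_c}$ landing on the forcing $F$ on the dual side.

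The step I expect to be the main obstacle is the endpoint $(2,6)$, which lies outside any elementary interpolation or Sobolev argument and genuinely requires the Keel--Tao bilinear real-interpolation scheme (and the correspondingly delicate double-endpoint retarded estimate). Closely related is the bookkeeping needed to confirm that the norm equivalence of Lemma \ref{Norm equivalence} and the Sobolev inequality of Lemma \ref{Sobolev inequality} remain valid throughout the claimed ranges of $s_c$ and $r$, since it is precisely these potential-adapted inequalities, available under $V\in\mathcal{K}_0\cap L^{3/2}$ and $\|V_-\|_{\mathcal{K}}<4\pi$, that allow the free-equation argument to run verbatim for $\mathcal{H}$.
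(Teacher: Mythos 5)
Your proposal is correct and takes essentially the same route as the paper: the paper obtains \eqref{010} and \eqref{011} by citation to \cite{07} and \cite{21}, whose proofs are precisely the Keel--Tao scheme built on $L^2$ unitarity and the dispersive estimate of Theorem \ref{Dispersive estimate} that you sketch, and it then deduces \eqref{012} and \eqref{013} exactly as you do, by combining \eqref{010} and \eqref{011} with the potential-adapted Sobolev inequality of Lemma \ref{Sobolev inequality} (together, implicitly, with the norm equivalence of Lemma \ref{Norm equivalence}) to trade $s_c$ derivatives for integrability. Your observation that the set $\Lambda_{s_c}$ is exactly the condition keeping the auxiliary $L^2$-admissible exponent $\tilde r$ in the valid window matches the paper's intent.
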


\eqref{010} and \eqref{011} are cited in \cite{07}. \eqref{012} and \eqref{013} are deduced by combining \eqref{010}, \eqref{011}, and Lemma \ref{Sobolev inequality}. 

\begin{proposition}[Pohozaev identities without a potential]\label{Pohozaev identity}
Let $1<p<5$. The ground state $Q$ for the elliptic equation \eqref{006} satisfies the following Pohozaev identities.
\begin{align*}
\|Q\|_{L^{p+1}}^{p+1}=\frac{2(p+1)}{5-p}\|Q\|_{L^2}^2,\ \ \ \|Q\|_{L^{p+1}}^{p+1}=\frac{2(p+1)}{3(p-1)}\|\nabla Q\|_{L^2}^2.
\end{align*}
\end{proposition}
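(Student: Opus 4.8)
The plan is to obtain two independent linear relations among the three quantities $A:=\|\nabla Q\|_{L^2}^2$, $B:=\|Q\|_{L^2}^2$, and $C:=\|Q\|_{L^{p+1}}^{p+1}$ by pairing the elliptic equation \eqref{006} against two different multipliers, and then to solve the resulting linear system. Since $Q>0$ we may write $Q^p=|Q|^{p-1}Q$, and I will use throughout that the ground state is smooth and that $Q$ together with $\nabla Q$ decays exponentially as $|x|\to\infty$; this decay is precisely what legitimizes the integrations by parts below and forces all boundary terms to vanish.

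First I would multiply \eqref{006} by $Q$ and integrate over $\mathbb{R}^3$. Since $\int (\Delta Q)\,Q=-\|\nabla Q\|_{L^2}^2$, this yields the first relation
\begin{align*}
A+B=C.
\end{align*}
Second, I would carry out the Pohozaev computation: multiply \eqref{006} by the multiplier $x\cdot\nabla Q$ and integrate. The three standard integration-by-parts identities in $N=3$ dimensions,
\begin{align*}
\int_{\mathbb{R}^3}(\Delta Q)(x\cdot\nabla Q)\,dx=\tfrac{1}{2}\|\nabla Q\|_{L^2}^2,\quad
\int_{\mathbb{R}^3}Q\,(x\cdot\nabla Q)\,dx=-\tfrac{3}{2}\|Q\|_{L^2}^2,\quad
\int_{\mathbb{R}^3}Q^p(x\cdot\nabla Q)\,dx=-\tfrac{3}{p+1}\|Q\|_{L^{p+1}}^{p+1},
\end{align*}
combine to give the second relation
\begin{align*}
\tfrac{1}{2}A+\tfrac{3}{2}B=\tfrac{3}{p+1}C.
\end{align*}

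Finally, solving this $2\times2$ system for $A$ and $B$ in terms of $C$ gives $A=\frac{3(p-1)}{2(p+1)}C$ and $B=\frac{5-p}{2(p+1)}C$, which rearrange exactly to the two claimed identities $\|Q\|_{L^{p+1}}^{p+1}=\frac{2(p+1)}{3(p-1)}\|\nabla Q\|_{L^2}^2$ and $\|Q\|_{L^{p+1}}^{p+1}=\frac{2(p+1)}{5-p}\|Q\|_{L^2}^2$.

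The main obstacle is rigorously justifying the Pohozaev step, since the multiplier $x\cdot\nabla Q$ grows linearly and a naive integration by parts is not automatically licensed. I would handle this by first working on balls $B_R$ (equivalently, inserting the cutoff $\chi_R$ from \eqref{072}), integrating by parts there to produce the identities above together with surface terms, and then sending $R\to\infty$. The exponential decay of $Q$ and $\nabla Q$ guarantees that every such surface term is $O(R^{3}e^{-cR})\to 0$, so the limiting identities hold and all appearing integrals converge; the remaining computations are the elementary integration-by-parts calculations recorded above.
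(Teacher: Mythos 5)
Your proposal is correct: the two multiplier identities (against $Q$ and against $x\cdot\nabla Q$) and the resulting linear algebra all check out, giving $\|\nabla Q\|_{L^2}^2=\tfrac{3(p-1)}{2(p+1)}\|Q\|_{L^{p+1}}^{p+1}$ and $\|Q\|_{L^2}^2=\tfrac{5-p}{2(p+1)}\|Q\|_{L^{p+1}}^{p+1}$, and your cutoff-plus-exponential-decay justification of the Pohozaev step is the standard rigorous treatment. The paper itself offers no proof, deferring instead to \cite[Lemma 8.1.2]{02}, and the argument given there is essentially the same multiplier computation you carried out, so your proof matches the intended one.
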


The proof of this proposition, see \cite[Lemma 8.1.2]{02}.\\
　\\
Using Proposition \ref{Pohozaev identity}, we have
\begin{align}
E_0[Q]=\frac{3p-7}{6(p-1)}\|\nabla Q\|_{L^2}^2\ \ \text{ and }\ \ C_\text{GN}=\frac{2(p+1)}{3(p-1)}\frac{1}{\|Q\|_{L^2}^\frac{5-p}{2}\|\nabla Q\|_{L^2}^\frac{3p-7}{2}}. \label{021}
\end{align}

\begin{lemma}[Fractional calculus, \cite{19}]\label{Fractional calculus}
Suppose $G\in C^1(\mathbb{C})$ and $s\in(0,1]$. Let $1<r, r_2<\infty$ and $1<r_1\leq\infty$ satisfying $\frac{1}{r}=\frac{1}{r_1}+\frac{1}{r_2}$. Then, we have
\begin{align*}
\||\nabla|^sG(u)\|_{L^r}\lesssim\|G'(u)\|_{L^{r_1}}\||\nabla|^su\|_{L^{r_2}}.
\end{align*}
\end{lemma}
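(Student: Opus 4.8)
This is the Christ--Weinstein fractional chain rule, and the plan is to reduce the estimate to a difference (square-function) characterization of the homogeneous Sobolev norm and then combine a pointwise mean value bound with H\"older's inequality. First I would dispose of the endpoint $s=1$: here the ordinary chain rule gives $\nabla(G(u))=G'(u)\nabla u$ pointwise, and since the Riesz transforms are bounded on $L^r$ for $1<r<\infty$ one has $\||\nabla|G(u)\|_{L^r}\sim\|\nabla G(u)\|_{L^r}$, so H\"older with $\frac1r=\frac1{r_1}+\frac1{r_2}$ yields the claim immediately. It then remains to treat the genuinely fractional range $0<s<1$.

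For $0<s<1$ I would invoke the Triebel--Lizorkin characterization $\dot{W}^{s,r}=\dot{F}^s_{r,2}$, which for $1<r<\infty$ takes the difference form
\begin{align*}
\||\nabla|^sf\|_{L^r}\sim\left\|\left(\int_{\mathbb{R}^3}\frac{|f(\cdot)-f(\cdot-y)|^2}{|y|^{3+2s}}dy\right)^\frac{1}{2}\right\|_{L^r}=:\|\mathcal{D}_sf\|_{L^r}.
\end{align*}
This reduces the lemma to the pointwise-then-integrated estimate $\|\mathcal{D}_s(G(u))\|_{L^r}\lesssim\|G'(u)\|_{L^{r_1}}\|\mathcal{D}_su\|_{L^{r_2}}$. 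The essential pointwise input is the fundamental theorem of calculus,
\begin{align*}
|G(u(x))-G(u(x-y))|\leq|u(x)-u(x-y)|\int_0^1|G'(\theta u(x)+(1-\theta)u(x-y))|\,d\theta,
\end{align*}
which, inserted into the definition of $\mathcal{D}_s(G(u))(x)$, separates the ``derivative'' factor from the ``difference'' factor that reconstructs $\mathcal{D}_su(x)$.

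The main obstacle is that $G'$ is evaluated along the segment joining $u(x)$ and $u(x-y)$ rather than at a single spatial point, so the factor $\int_0^1|G'(\cdots)|\,d\theta$ cannot be pulled out of the $y$-integral directly. To handle this I would dominate the segment average by the Hardy--Littlewood maximal function $\mathcal{M}(G'(u))$ evaluated at $x$ (splitting the $y$-region according to whether $|u(x)-u(x-y)|$ is small or large, so that $G'$ on the segment is comparable to its value at $x$ up to a maximal-function error), and then apply H\"older in $x$ with exponents $r_1,r_2$ together with the $L^{r_1}$-boundedness of $\mathcal{M}$ when $1<r_1<\infty$; the case $r_1=\infty$ is immediate since then $\int_0^1|G'(\cdots)|\,d\theta\leq\|G'(u)\|_{L^\infty}$. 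When the maximal operator has to act on the vector-valued square function, the Fefferman--Stein inequality supplies the needed control. This matching of the segment-averaged derivative against $\|G'(u)\|_{L^{r_1}}$ is precisely the technical heart of the argument; once it is in place, the scaling relation $\frac1r=\frac1{r_1}+\frac1{r_2}$ closes the estimate.
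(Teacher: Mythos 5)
The paper offers no proof of this lemma at all: it is quoted from \cite{19}, which in turn rests on the fractional chain rule of Christ--Weinstein \cite{08}, so your attempt must be judged on its own merits. Your endpoint case $s=1$ (chain rule, Riesz transform boundedness, H\"older) is fine, and reducing $0<s<1$ to the square-function difference characterization of $\dot{W}^{s,r}$ together with the fundamental theorem of calculus along the segment is a legitimate opening move.

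The genuine gap sits exactly at the step you yourself call the technical heart: dominating $\int_0^1|G'(\theta u(x)+(1-\theta)u(x-y))|\,d\theta$ by $\mathcal{M}(G'(u))(x)$. For a general $G\in C^1(\mathbb{C})$ this step cannot be carried out. First, $G'$ is merely continuous, with no quantitative modulus of continuity, so ``$|u(x)-u(x-y)|$ small $\Rightarrow$ $G'$ on the segment is comparable to $G'(u(x))$'' has no uniform content. Second, and more fundamentally, when $|u(x)-u(x-y)|$ is not small, the segment joining $u(x)$ and $u(x-y)$ passes through points of $\mathbb{C}$ that need not be values of $u$ anywhere, so no maximal function of $G'(u)$ can control $G'$ there: picture $u$ taking values essentially equal to $\pm1$ away from a small transition set, with $G'$ a bump concentrated near $0$; the segment average is of order one at many points $x$ where $\mathcal{M}(G'(u))(x)$ is arbitrarily small. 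This is precisely why the chain rule in \cite{08} is \emph{not} proved for arbitrary $C^1$ nonlinearities but under a structural hypothesis of the form $|G'(\theta v+(1-\theta)w)|\lesssim|G'(v)|+|G'(w)|$ for all $v,w\in\mathbb{C}$ and $\theta\in[0,1]$ --- automatic for the power nonlinearity $G(u)=|u|^{p-1}u$, which is the only case this paper ever uses --- and even then the published argument runs through a Littlewood--Paley telescoping $G(u)=\sum_N\bigl[G(u_{\leq N})-G(u_{\leq N/2})\bigr]$ with maximal-function bounds on each dyadic piece, not through the difference kernel directly. Finally, even if you grant yourself the segment hypothesis, your scheme still leaves a term in which $|G'(u(x-y))|$ sits inside the $y$-integral, namely $\bigl(\int|u(x)-u(x-y)|^2|G'(u(x-y))|^2|y|^{-3-2s}dy\bigr)^{1/2}$; plain H\"older in $x$ does not apply to it, and invoking Fefferman--Stein by name does not explain how the vector-valued maximal inequality is actually brought to bear. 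As written the proposal does not close; to repair it, either assume the segment condition and rework the estimate along the lines of \cite{08}, or restrict the statement to $G(u)=|u|^{p-1}u$ as the paper implicitly does.
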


\begin{lemma}[Radial Sobolev inequality]\label{Radial Sobolev inequality}
Let $1\leq p$. For a radial function $f\in H^1$, it follows that
\begin{align*}
\|f\|_{L^{p+1}(R\leq|x|)}^{p+1}\lesssim\frac{1}{R^{p-1}}\|f\|_{L^2(R\leq|x|)}^\frac{p+3}{2}\|\nabla f\|_{L^2(R\leq|x|)}^\frac{p-1}{2}
\end{align*}
for any $R>0$, where the implicit constant is independent of $R$ and $f$.
\end{lemma}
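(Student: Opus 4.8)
The plan is to reduce the statement to a pointwise radial (Strauss-type) decay estimate and then split the exponent as $p+1=(p-1)+2$, absorbing the $(p-1)$-part into the pointwise bound and integrating the remaining $|f|^2$ against the three-dimensional volume element.

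First I would establish that for a radial $f\in H^1(\mathbb{R}^3)$ and every $r>0$,
\[
|f(r)|^2\lesssim\frac{1}{r^2}\|f\|_{L^2(r\leq|x|)}\|\nabla f\|_{L^2(r\leq|x|)}.
\]
Since radial $H^1$ functions tend to $0$ at infinity, the fundamental theorem of calculus together with $\frac{d}{ds}|f(s)|^2=2\,\mathrm{Re}(\overline{f(s)}f'(s))$ gives $|f(r)|^2\leq 2\int_r^\infty|f(s)||f'(s)|\,ds$. Inserting the weight $s^2$, using $s^{-2}\leq r^{-2}$ on $\{s\geq r\}$, and applying Cauchy--Schwarz yields
\[
|f(r)|^2\leq\frac{2}{r^2}\int_r^\infty|f(s)||f'(s)|\,s^2\,ds\leq\frac{2}{r^2}\left(\int_r^\infty|f(s)|^2s^2\,ds\right)^{\frac12}\left(\int_r^\infty|f'(s)|^2s^2\,ds\right)^{\frac12}.
\]
Recognizing these integrals as the tail $L^2$ norms (using $|\nabla f|=|f'|$ for radial $f$, so that $\|f\|_{L^2(r\leq|x|)}^2=4\pi\int_r^\infty|f(s)|^2s^2\,ds$, and likewise for $\nabla f$) gives the claimed pointwise bound.

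Next I would write $|f(r)|^{p+1}=|f(r)|^{p-1}\,|f(r)|^2$ and estimate the first factor by the pointwise bound. Using $r\geq R$ together with the monotonicity of the tail norms, $\|f\|_{L^2(r\leq|x|)}\leq\|f\|_{L^2(R\leq|x|)}$ and the same for $\nabla f$, and $r^{-(p-1)}\leq R^{-(p-1)}$ (which is exactly where the hypothesis $p\geq1$ enters), I obtain
\[
|f(r)|^{p-1}\lesssim\frac{1}{R^{p-1}}\|f\|_{L^2(R\leq|x|)}^{\frac{p-1}{2}}\|\nabla f\|_{L^2(R\leq|x|)}^{\frac{p-1}{2}}.
\]
Integrating this against the remaining factor over $\{R\leq|x|\}$ and using $4\pi\int_R^\infty|f(r)|^2r^2\,dr=\|f\|_{L^2(R\leq|x|)}^2$ gives
\[
\|f\|_{L^{p+1}(R\leq|x|)}^{p+1}\lesssim\frac{1}{R^{p-1}}\|f\|_{L^2(R\leq|x|)}^{\frac{p-1}{2}+2}\|\nabla f\|_{L^2(R\leq|x|)}^{\frac{p-1}{2}},
\]
and since $\frac{p-1}{2}+2=\frac{p+3}{2}$ this is precisely the asserted inequality.

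The argument is essentially routine, so there is no genuine obstacle; the two points requiring a little care are the vanishing of the boundary contribution at infinity (guaranteed by the decay of radial $H^1$ functions) and the uniformity of the constant in $R$. The latter is transparent from the derivation, since all of the $R$-dependence is isolated in the explicit prefactor $R^{-(p-1)}$, the monotonicity of the tail norms never re-introduces $R$, and every implicit constant comes only from the sphere-measure factor $4\pi$ and the Cauchy--Schwarz step.
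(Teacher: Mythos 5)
Your proof is correct and follows essentially the same route as the paper: both split $|f|^{p+1}=|f|^{p-1}\,|f|^2$ and control the $|f|^{p-1}$ factor by the Strauss-type radial decay estimate $\|f\|_{L^\infty(R\leq|x|)}^2\lesssim R^{-2}\|f\|_{L^2(R\leq|x|)}\|\nabla f\|_{L^2(R\leq|x|)}$. The only difference is that the paper cites this decay estimate from Ogawa--Tsutsumi (Lemma 1 of \cite{17}), while you derive it from scratch via the fundamental theorem of calculus and Cauchy--Schwarz, making the argument self-contained.
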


\begin{proof}
By \cite[Lemma 1]{17}, we have
\begin{align*}
\|f\|_{L^{p+1}(R\leq|x|)}^{p+1}&\leq\|f\|_{L^\infty(R\leq|x|)}^{p-1}\|f\|_{L^2(R\leq|x|)}^2\lesssim\frac{1}{R^{p-1}}\|f\|_{L^2(R\leq|x|)}^\frac{p+3}{2}\|\nabla f\|_{L^2(R\leq|x|)}^\frac{p-1}{2}.
\end{align*}
\end{proof}

\begin{lemma}[Hardy's inequality, \cite{24}]
Let $q\in[0,2]$. If $f\in H^1$, then
\begin{align*}
\int_{\mathbb{R}^3}\frac{|f(x)|^2}{|x|^q}dx\leq\left(\frac{2}{3-q}\right)^q\|f\|_{L^2}^{2-q}\|\nabla f\|_{L^2}^q.
\end{align*}
\end{lemma}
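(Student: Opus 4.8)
The plan is to prove the estimate first for $f\in C_c^\infty(\mathbb{R}^3)$ and then pass to general $f\in H^1$ by density, which is routine once the inequality is available on a dense set (use that the right-hand side is continuous in the $H^1$-norm, apply the inequality to differences, and conclude by Fatou). For smooth, compactly supported $f$ the starting point is the elementary divergence identity
\begin{align*}
\nabla\cdot\left(\frac{x}{|x|^q}\right)=(3-q)\frac{1}{|x|^q}
\end{align*}
on $\mathbb{R}^3\setminus\{0\}$, which exhibits the weight $|x|^{-q}$ as a divergence. Integrating by parts and using $|x\cdot\nabla f|\le|x|\,|\nabla f|$ gives
\begin{align*}
\int_{\mathbb{R}^3}\frac{|f|^2}{|x|^q}dx=-\frac{2}{3-q}\mathrm{Re}\int_{\mathbb{R}^3}\frac{\bar f\,(x\cdot\nabla f)}{|x|^q}dx\le\frac{2}{3-q}\int_{\mathbb{R}^3}|f|\,|\nabla f|\,|x|^{1-q}dx.
\end{align*}

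For $1\le q\le 2$ I would close the argument with a three-exponent H\"older inequality tuned so that the residual weight is reabsorbed. Writing
\begin{align*}
|f|\,|\nabla f|\,|x|^{1-q}=\left(\frac{|f|}{|x|^{q/2}}\right)^{s_1}|\nabla f|^{s_2}|f|^{s_3},\qquad s_1=\frac{2(q-1)}{q},\ s_2=1,\ s_3=\frac{2-q}{q},
\end{align*}
one checks that $s_1+s_2+s_3=2$ and that $s_1,s_3\ge 0$ exactly when $q\in[1,2]$. Applying H\"older with the conjugate exponents $p_i=2/s_i$, so that each factor is measured in $L^2$, yields
\begin{align*}
\int_{\mathbb{R}^3}|f|\,|\nabla f|\,|x|^{1-q}dx\le\left(\int_{\mathbb{R}^3}\frac{|f|^2}{|x|^q}dx\right)^{\frac{q-1}{q}}\|\nabla f\|_{L^2}\,\|f\|_{L^2}^{\frac{2-q}{q}}.
\end{align*}
Inserting this and absorbing the factor $\left(\int|f|^2|x|^{-q}\right)^{(q-1)/q}$ on the left (the leftover exponent $1-\frac{q-1}{q}=\frac1q$ is positive) leaves $\left(\int|f|^2|x|^{-q}\right)^{1/q}\le\frac{2}{3-q}\|\nabla f\|_{L^2}\|f\|_{L^2}^{(2-q)/q}$; raising to the $q$-th power produces precisely the constant $\left(\frac{2}{3-q}\right)^q$. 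The endpoints appear transparently inside this computation: $q=1$ gives $s_1=0$ and the bound is just Cauchy--Schwarz, $\int|f|\,|\nabla f|\le\|f\|_{L^2}\|\nabla f\|_{L^2}$, while $q=2$ gives $s_3=0$ and recovers the classical sharp Hardy inequality $\int|f|^2|x|^{-2}\le4\|\nabla f\|_{L^2}^2$.

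\textbf{The main obstacle} is the range $0\le q<1$. There the exponent $s_1=\frac{2(q-1)}{q}$ becomes negative, so the conjugate $p_1=2/s_1$ leaves the admissible range and the forward H\"older step collapses; this reflects the genuine difficulty that, after the integration by parts, the residual factor $|x|^{1-q}$ carries a \emph{positive} power of $|x|$, which cannot be reabsorbed into the centred quantity $\int|f|^2|x|^{-q}$ by a forward inequality. For this regime I would instead split the weight as $|x|^{1-q}=|x|^{-q/2}\cdot|x|^{1-q/2}$ and argue by Cauchy--Schwarz against a suitably chosen $q$-dependent reference profile, or interpolate between the two directly verifiable cases $q=0$ and $q=1$; tracking the exact constant through this regime is delicate and is the step I expect to require the most care.
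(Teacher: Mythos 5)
The paper gives no proof for you to be compared against: this lemma is quoted from the reference \cite{24} without argument, so your proposal has to stand on its own. Your treatment of the range $1\leq q\leq2$ does stand: the divergence identity, the integration by parts (legitimate for $f\in C_c^\infty$ after excising a small ball $\{|x|<\epsilon\}$, since the boundary term is $O(\epsilon^{3-q})$ and $|x|^{-q}$ is locally integrable for $q<3$), the three-exponent H\"older step with $p_i=2/s_i$, and the absorption of the factor $\left(\int|f|^2|x|^{-q}dx\right)^{(q-1)/q}$ (finite on $C_c^\infty$, so absorption is licit) are all correct, and they produce exactly the constant $\left(\frac{2}{3-q}\right)^q$; the density step is routine as you say. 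So on $[1,2]$ your proof is complete.

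The obstacle you flag on $0\leq q<1$ is, however, not a technicality that more care could remove: the inequality \emph{as stated is false} on that range, so no argument can close the gap. Test the Gaussian $f(x)=e^{-|x|^2/2}$ at $q=\frac12$. One computes
\begin{align*}
\int_{\mathbb{R}^3}\frac{|f|^2}{|x|^{1/2}}dx=2\pi\,\Gamma\left(\tfrac{5}{4}\right),\qquad
\|f\|_{L^2}^{3/2}\|\nabla f\|_{L^2}^{1/2}=\left(\tfrac{3}{2}\right)^{1/4}\pi^{3/2},\qquad
\left(\tfrac{2}{3-q}\right)^{q}=\left(\tfrac{4}{5}\right)^{1/2},
\end{align*}
so the claimed bound would require $\frac{2}{\sqrt{\pi}}\Gamma\left(\frac54\right)\leq\left(\frac45\right)^{1/2}\left(\frac32\right)^{1/4}$, i.e.\ $1.023\leq0.990$, which fails; differentiating in $q$ at $q=0$ shows the same failure for all small $q>0$. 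The root cause is exactly the feature you noticed: for $q<1$ the constant $\left(\frac{2}{3-q}\right)^q$ drops below $1$, and that is too strong. What is true on $0\leq q\leq1$ is the inequality with constant $1$:
\begin{align*}
\int_{\mathbb{R}^3}\frac{|f|^2}{|x|^q}dx\leq\left(\int_{\mathbb{R}^3}\frac{|f|^2}{|x|}dx\right)^{q}\|f\|_{L^2}^{2(1-q)}\leq\|f\|_{L^2}^{2-q}\|\nabla f\|_{L^2}^{q},
\end{align*}
by H\"older applied to $|x|^{-q}=\left(|x|^{-1}\right)^q$ followed by your $q=1$ case. This is your ``interpolate between $q=0$ and $q=1$'' fallback made precise, and it is the best one should hope for, but note that it proves a \emph{corrected} statement (constant $1$), not the stated one. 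Since the paper invokes this lemma only with implicit constants, in the proof of Lemma \ref{Radial Sobolev embedding}, the corrected constant $\max\left\{1,\left(\frac{2}{3-q}\right)^q\right\}$ --- or the uniform bound $2^q$ obtained by H\"older against the $q=2$ case --- is all that is actually needed there; but you should record that the lemma as printed is correct precisely on $[1,2]$, which is the range your proof covers.
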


\begin{lemma}[Radial Sobolev embedding, \cite{18}, \cite{20}]\label{Radial Sobolev embedding}
For a radial function $f\in H^1$ and $\frac{1}{2}\leq s\leq1$, it follows that
\begin{align*}
\||x|^sf\|_{L^\infty}\lesssim\|f\|_{H^1}.
\end{align*}
\end{lemma}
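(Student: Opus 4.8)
The plan is to reduce the estimate to a one-dimensional pointwise bound on the radial profile of $f$ and then insert the weight $|x|^s$, controlling the resulting singular-weight integral by the Hardy inequality stated above. Writing $f(x)=f(r)$ with $r=|x|$, and recalling that a radial $H^1$ function tends to $0$ as $r\to\infty$, I would start from the fundamental theorem of calculus,
$$|f(r)|^2=-\int_r^\infty\frac{d}{d\rho}|f(\rho)|^2\,d\rho\le2\int_r^\infty|f(\rho)||f'(\rho)|\,d\rho,$$
first for smooth compactly supported radial functions and then for general radial $f\in H^1$ by density.

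Since $\rho\ge r$ on the domain of integration and $s\ge0$, I can absorb the weight using $r^{2s}\le\rho^{2s}$, obtaining
$$r^{2s}|f(r)|^2\le2\int_0^\infty\rho^{2s}|f(\rho)||f'(\rho)|\,d\rho.$$
Splitting the weight as $\rho^{2s}=\rho^{2s-1}\cdot\rho$ and applying the Cauchy--Schwarz inequality gives
$$r^{2s}|f(r)|^2\lesssim\left(\int_0^\infty\rho^{4s-2}|f(\rho)|^2\,d\rho\right)^{1/2}\left(\int_0^\infty\rho^2|f'(\rho)|^2\,d\rho\right)^{1/2},$$
where the second factor is comparable to $\|\nabla f\|_{L^2}$ after accounting for the three-dimensional radial Jacobian, since $\|\nabla f\|_{L^2}^2=4\pi\int_0^\infty|f'(\rho)|^2\rho^2\,d\rho$ for radial $f$.

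The key step is to estimate the first factor, and this is where the hypothesis on $s$ enters. Passing back to $\mathbb{R}^3$, one has $\int_0^\infty\rho^{4s-2}|f|^2\,d\rho=\frac{1}{4\pi}\int_{\mathbb{R}^3}\frac{|f(x)|^2}{|x|^{4-4s}}\,dx$, and applying Hardy's inequality with $q=4-4s$ — which lies in $[0,2]$ precisely because $\frac12\le s\le1$ — yields $\int_0^\infty\rho^{4s-2}|f|^2\,d\rho\lesssim\|f\|_{L^2}^{4s-2}\|\nabla f\|_{L^2}^{4-4s}$. Substituting back, the homogeneity balances to
$$r^{2s}|f(r)|^2\lesssim\|f\|_{L^2}^{2s-1}\|\nabla f\|_{L^2}^{3-2s}\le\|f\|_{H^1}^2,$$
since the two exponents sum to $2$; taking square roots and the supremum over $r>0$ gives the claim. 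The only genuine obstacle is this last weighted estimate near the origin: the restriction $\frac12\le s\le1$ is exactly the range in which the Hardy exponent $q=4-4s$ stays admissible, and the endpoints $s=\frac12$ (where $q=2$) and $s=1$ (where $q=0$, recovering the classical three-dimensional radial decay bound $r|f(r)|\lesssim\|f\|_{L^2}^{1/2}\|\nabla f\|_{L^2}^{1/2}$) are the constraining cases.
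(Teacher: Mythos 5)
Your proof is correct and follows essentially the same route as the paper's: the fundamental theorem of calculus on $[r,\infty)$, Cauchy--Schwarz with the splitting $\rho^{2s}=\rho^{2s-1}\cdot\rho$, and Hardy's inequality with exponent $q=4(1-s)\in[0,2]$. The only differences are cosmetic --- you insert the weight via the monotonicity $r^{2s}\le\rho^{2s}$ rather than differentiating $\tau^{2s}|f(\tau)|^2$ directly as the paper does (which lets it discard the nonpositive term $-2s\int\tau^{2s-1}|f|^2\,d\tau$), and you spell out the density argument and the sharper interpolation bound $\|f\|_{L^2}^{2s-1}\|\nabla f\|_{L^2}^{3-2s}$ that the paper leaves implicit.
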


This estimate is cited in \cite{18} and \cite{20}. For convenience of readers, we give its proof.

\begin{proof}
We set $r=|x|$. Using Hardy's inequality, we have
\begin{align*}
r^{2s}|f(r)|^2
	&=-\int_r^\infty\frac{d}{d\tau}(\tau^{2s}|f(\tau)|^2)d\tau=-\int_r^\infty(2s\tau^{2s-1}|f(\tau)|^2+2\text{Re}(\tau^{2s}f(\tau)\overline{f'(\tau)}))d\tau\\
	&\leq2\int_r^\infty\tau^{2s}|f(\tau)||f'(\tau)|d\tau\leq2\left(\int_r^\infty \tau^{2(2s-1)}|f(\tau)|^2d\tau\right)^\frac{1}{2}\left(\int_r^\infty\tau^2|f'(\tau)|^2d\tau\right)^\frac{1}{2}\\
	&\lesssim\left(\int_{\mathbb{R}^3}\frac{|f(x)|^2}{|x|^{4(1-s)}}dx\right)^\frac{1}{2}\left(\int_{\mathbb{R}^3}|\nabla f(x)|^2dx\right)^\frac{1}{2}\lesssim\|f\|_{H^1}^2.
\end{align*}
\end{proof}

\begin{proposition}[Virial identity, \cite{10}]\label{Finite variance}
For a solution $u(t)$ to (NLS$_V$) satisfying $xu_0\in L^2$, we define
\begin{align*}
I(t):=\int_{\mathbb{R}^3}|x|^2|u(t,x)|^2dx.
\end{align*}
Using (NLS$_V$), one finds
\begin{align*}
I'(t)=4\text{Im}\int_{\mathbb{R}^3}\overline{u}\nabla u\cdot xdx,
\end{align*}
\begin{align*}
I''(t)=8\int_{\mathbb{R}^3}|\nabla u|^2dx-\frac{12(p-1)}{p+1}&\int_{\mathbb{R}^3}|u|^{p+1}dx-4\int_{\mathbb{R}^3}(x\cdot\nabla V)|u|^2dx.
\end{align*}
\end{proposition}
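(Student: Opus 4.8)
The plan is to carry out the classical virial computation formally and then indicate how to justify it rigorously. Writing the equation as $\partial_t u = i(\Delta u - Vu + |u|^{p-1}u)$, I would differentiate $I(t)$ under the integral sign. Since $\partial_t|u|^2 = 2\text{Re}(\overline{u}\partial_t u) = -2\text{Im}(\overline{u}(\Delta u - Vu + |u|^{p-1}u))$ and both $\overline{u}Vu = V|u|^2$ and $\overline{u}|u|^{p-1}u = |u|^{p+1}$ are real, only the Laplacian survives, giving $I'(t) = -2\text{Im}\int|x|^2\overline{u}\Delta u\,dx$. Integrating by parts via $\nabla(|x|^2\overline{u}) = 2x\overline{u} + |x|^2\nabla\overline{u}$ and discarding the real term $\int|x|^2|\nabla u|^2\,dx$, one obtains $I'(t) = 4\text{Im}\int\overline{u}\nabla u\cdot x\,dx$, which is the first identity.

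For the second identity I set $J(t) := \text{Im}\int\overline{u}(x\cdot\nabla u)\,dx$, so that $I''(t) = 4J'(t)$. Differentiating and substituting $\partial_t u = iF$ with $F := \Delta u - Vu + |u|^{p-1}u$, the factors $\pm i$ turn imaginary parts into real parts and I obtain $J'(t) = \int\big[\text{Re}(\overline{u}(x\cdot\nabla F)) - \text{Re}(\overline{F}(x\cdot\nabla u))\big]\,dx$. I would then split $F$ into its three pieces and treat them separately.

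The potential and nonlinear pieces are straightforward. For the potential term $-Vu$, the product rule $x\cdot\nabla(Vu) = (x\cdot\nabla V)u + V(x\cdot\nabla u)$ makes the two occurrences of $V(x\cdot\nabla u)$ cancel, leaving exactly $-\int(x\cdot\nabla V)|u|^2\,dx$. For the nonlinearity, using $\nabla|u|^{p+1} = (p+1)|u|^{p-1}\text{Re}(\overline{u}\nabla u)$ one checks that the contribution collapses to $\frac{p-1}{p+1}\int x\cdot\nabla(|u|^{p+1})\,dx$, and integrating by parts (with $\operatorname{div} x = 3$ in $\mathbb{R}^3$) gives $-\frac{3(p-1)}{p+1}\int|u|^{p+1}\,dx$. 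After multiplying by $4$ these already produce the last two terms of $I''$.

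The Laplacian piece is the computational heart: I must show $\int\big[\text{Re}(\overline{u}(x\cdot\nabla\Delta u)) - \text{Re}(\overline{\Delta u}(x\cdot\nabla u))\big]\,dx = 2\int|\nabla u|^2\,dx$. This follows from repeated integration by parts, moving the dilation operator $x\cdot\nabla$ across $\Delta$ and using $\text{Re}(\partial_k\overline{u}\,\partial_j\partial_k u) = \frac{1}{2}\partial_j|\partial_k u|^2$ together with $\operatorname{div} x = 3$ to evaluate the resulting weighted integrals; multiplying the assembled identity by $4$ yields the stated formula. The main obstacle is not the algebra but its rigorous justification: differentiation under the integral and the integrations by parts require adequate decay and regularity. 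The clean way is to run the computation for smooth, rapidly decaying approximations (or after truncating the weight by $|x|^2\chi_R$ in the sense of \eqref{072} and letting $R\to\infty$) and to pass to the limit using $u\in H^1$, the propagation $xu_0\in L^2\Rightarrow xu(t)\in L^2$ along the lifespan, and the hypothesis $x\cdot\nabla V\in L^{\frac{3}{2}}$ to control the potential term.
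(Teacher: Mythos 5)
Your proposal is correct, and it is essentially the paper's own (implicit) approach: the paper gives no proof of this proposition at all, citing Hong \cite{10} and remarking only that the virial identities ``follow by a direct calculation,'' which is exactly the computation you carry out. Your splitting of $F=\Delta u-Vu+|u|^{p-1}u$, the three resulting contributions to $J'(t)$ (namely $2\int|\nabla u|^2dx$, $-\int(x\cdot\nabla V)|u|^2dx$, and $-\tfrac{3(p-1)}{p+1}\int|u|^{p+1}dx$, which after multiplication by $4$ give the stated $I''$), and your justification via truncation together with the propagation $xu_0\in L^2\Rightarrow xu(t)\in L^2$ (Theorem \ref{Local well-posedness 2}) are all sound.
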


\begin{proposition}[Localized virial identity, \cite{10}]\label{Virial identity}
Given a real valued weight function $\omega\in C^\infty(\mathbb{R}^3)$ and the solution $u(t)$ to (NLS$_V$), we define
\begin{align*}
I(t):=\int_{\mathbb{R}^3}\omega(x)|u(t,x)|^2dx.
\end{align*}
Using (NLS$_V$), one finds
\begin{align}
I'(t)=2\text{Im}\int_{\mathbb{R}^3}\overline{u}\nabla u\cdot\nabla\omega dx, \label{022}
\end{align}
\begin{align}
I''(t)=4\sum_{i,j=1}^3\text{Re}\int_{\mathbb{R}^3}\omega_{ij}u_i\overline{u_j}dx-\frac{2(p-1)}{p+1}
	&\int_{\mathbb{R}^3}\Delta\omega|u|^{p+1}dx \notag \\
	&-\int_{\mathbb{R}^3}\Delta^2\omega|u|^2dx-2\int_{\mathbb{R}^3}(\nabla\omega\cdot\nabla V)|u|^2dx. \label{023}
\end{align}
If $\omega$ is radial, then we can write
\begin{align}
I'(t)=2\text{Im}\int_{\mathbb{R}^3}\frac{x\cdot\nabla u}{r}\overline{u}\omega'dx, \label{024}
\end{align}
\begin{align}
	&I''(t)=4\int_{\mathbb{R}^3}\left(\frac{\omega''}{r^2}-\frac{\omega'}{r^3}\right)|x\cdot\nabla u|^2dx+4\int_{\mathbb{R}^3}\frac{\omega'}{r}|\nabla u|^2dx-\frac{2(p-1)}{p+1}\int_{\mathbb{R}^3}\left(\omega''+\frac{2}{r}\omega'\right)|u|^{p+1}dx \notag \\
	&\hspace{5.0cm}-\int_{\mathbb{R}^3}\left(\omega^{(4)}+\frac{4}{r}\omega^{(3)}\right)|u|^2dx-2\int_{\mathbb{R}^3}\frac{\omega'}{r}(x\cdot\nabla V)|u|^2dx. \label{025}
\end{align}
\end{proposition}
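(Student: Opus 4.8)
The plan is to derive all four identities from the local conservation laws associated with (NLS$_V$), treating the computation as formal and justifying the time-differentiation under the integral and the integrations by parts by a standard approximation argument (first for smooth, rapidly decaying data, where every manipulation is legitimate, and then passing to the limit in $H^1$ via the local well-posedness theory).

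First I would handle \eqref{022}. Writing the equation as $\partial_t u = i(\Delta u - Vu + |u|^{p-1}u)$, the pointwise identity $\partial_t|u|^2 = 2\,\mathrm{Re}(\overline{u}\,\partial_t u) = -2\,\mathrm{Im}(\overline{u}\Delta u)$ holds, since the potential and nonlinear contributions $-iV|u|^2$ and $i|u|^{p+1}$ are purely imaginary and drop out. Rewriting $\mathrm{Im}(\overline{u}\Delta u) = \nabla\cdot\mathrm{Im}(\overline{u}\nabla u)$ and integrating $\int\omega\,\partial_t|u|^2\,dx$ by parts gives $I'(t) = 2\,\mathrm{Im}\int_{\mathbb{R}^3}\overline{u}\nabla u\cdot\nabla\omega\,dx$, i.e.\ \eqref{022}.

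The core of the argument is \eqref{023}. I would differentiate the momentum density $p_j := 2\,\mathrm{Im}(\overline{u}\,\partial_j u)$, so that $I'(t) = \int\partial_j\omega\,p_j\,dx$ and $I''(t) = \int\partial_j\omega\,\partial_t p_j\,dx$. Inserting the equation and taking imaginary parts splits $\partial_t p_j$ into three groups. Using $\overline{u}\,\partial_j(Vu) = (\partial_j V)|u|^2 + V\overline{u}\,\partial_j u$, the potential group collapses to $-2(\partial_j V)|u|^2$; using $\mathrm{Re}(\overline{u}\,\partial_j(|u|^{p-1}u)) = \tfrac{p}{2}|u|^{p-1}\partial_j|u|^2$ together with $|u|^{p-1}\partial_j|u|^2 = \tfrac{2}{p+1}\partial_j|u|^{p+1}$, the nonlinear group collapses to $\tfrac{2(p-1)}{p+1}\partial_j|u|^{p+1}$. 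The kinetic group is the main computation: I would exploit the divergence structure $-\Delta\overline{u}\,\partial_j u + \overline{u}\,\partial_j\Delta u = \partial_k(-\partial_k\overline{u}\,\partial_j u + \overline{u}\,\partial_j\partial_k u)$, integrate by parts against $\partial_j\omega$, and use $\mathrm{Re}(\overline{u}\,\partial_j\partial_k u) = \tfrac{1}{2}\partial_j\partial_k|u|^2 - \mathrm{Re}(\partial_j u\,\overline{\partial_k u})$ to combine the two resulting real-part terms into the Hessian term $4\sum_{i,j}\mathrm{Re}\int\omega_{ij}u_i\overline{u_j}\,dx$; two further integrations by parts, via $\sum_{j,k}\partial_j\partial_k\omega_{jk} = \Delta^2\omega$, yield the bi-Laplacian term $-\int\Delta^2\omega|u|^2\,dx$. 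Integrating the nonlinear and potential groups against $\nabla\omega$ then produces $-\tfrac{2(p-1)}{p+1}\int\Delta\omega|u|^{p+1}\,dx$ and $-2\int(\nabla\omega\cdot\nabla V)|u|^2\,dx$, which assembles \eqref{023}.

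Finally, for radial $\omega$ I substitute $\nabla\omega = \omega'(r)\,x/r$ directly into \eqref{022} to obtain \eqref{024}. For \eqref{025} I compute the radial reductions $\omega_{ij} = (\tfrac{\omega''}{r^2}-\tfrac{\omega'}{r^3})x_ix_j + \tfrac{\omega'}{r}\delta_{ij}$, $\Delta\omega = \omega''+\tfrac{2}{r}\omega'$, and $\Delta^2\omega = \omega^{(4)}+\tfrac{4}{r}\omega^{(3)}$, and apply the contractions $\sum_{i,j}x_ix_j\,\mathrm{Re}(\partial_i u\,\overline{\partial_j u}) = |x\cdot\nabla u|^2$ and $\sum_i|\partial_i u|^2 = |\nabla u|^2$ to the Hessian term. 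I expect the main obstacle to be twofold: managing the algebra of the kinetic group so that the divergence structure and the recombination of the two real-part terms correctly reproduce both the Hessian and the bi-Laplacian, and rigorously justifying the differentiation under the integral sign and the repeated integrations by parts for merely $H^1$ solutions, which I would resolve by the density/approximation argument indicated at the outset.
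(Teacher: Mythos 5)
Your proposal is correct: the paper itself gives no detailed argument for this proposition (it cites \cite{10} for the case $p=3$ and states that the general case ``follows by a direct calculation''), and your computation via the momentum density $p_j=2\,\mathrm{Im}(\overline{u}\,\partial_j u)$, the divergence structure of the kinetic group, and the radial reductions $\omega_{ij}=\bigl(\tfrac{\omega''}{r^2}-\tfrac{\omega'}{r^3}\bigr)x_ix_j+\tfrac{\omega'}{r}\delta_{ij}$, $\Delta\omega=\omega''+\tfrac{2}{r}\omega'$, $\Delta^2\omega=\omega^{(4)}+\tfrac{4}{r}\omega^{(3)}$ is precisely that standard direct calculation, carried out correctly for general $p$ and non-radial $V$. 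Since this matches the approach the paper intends (and delegates), there is nothing further to compare.
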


Proposition \ref{Virial identity} with $p=3$ was proved in \cite{10}. Proposition \ref{Virial identity} follows by a direct calculation. 

\section{Local well-posedness}

In this section, we investigate local well-posedness of (NLS$_V$).

\begin{theorem}[Local well-posedness in $H^1$]\label{Local well-posedness 1}
Let $1\leq p<5$, $V\in\mathcal{K}_0\cap L^\frac{3}{2}$, and $\|V_-\|_{\mathcal{K}}<4\pi$. For any $u_0\in H^1$, there exists $T=T(\|u_0\|_{H^1},V)>0$ and a unique solution $u$ to (NLS$_V$) on a time interval $I=[-T,T]$ with
\begin{align*}
u\in C([-T,T];H^1)\cap L^q(-T,T;W_V^{1,r})
\end{align*}
for any $L^2$\,admissible pair $(q,r)$.
\end{theorem}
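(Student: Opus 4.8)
The plan is to recast (NLS$_V$) as the Duhamel integral equation
\begin{align*}
u(t)=\Phi(u)(t):=e^{-it\mathcal{H}}u_0+i\int_0^te^{-i(t-s)\mathcal{H}}\big(|u|^{p-1}u\big)(s)\,ds
\end{align*}
and to produce a fixed point of $\Phi$ by the contraction mapping principle in a Strichartz-type space on $I=[-T,T]$. Concretely, for constants $M>0$ and $T>0$ to be fixed, I would work in the complete metric space
\begin{align*}
X_{T,M}:=\Big\{u:\ \|u\|_{L^\infty(I;H^1)}+\sup_{(q,r)\,:\,L^2\text{ adm.}}\big\|(1+\mathcal{H})^{\frac12}u\big\|_{L^q(I;L^r)}\le M\Big\}
\end{align*}
endowed with the weaker distance $d(u,v):=\|u-v\|_{S(L^2)}$. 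Using the weaker metric for the contraction while retaining the full norm for the invariant ball is the standard device that avoids having to differentiate the difference of two nonlinearities. Since $(1+\mathcal{H})^{\frac12}$ commutes with $e^{-it\mathcal{H}}$, the quantity $\|(1+\mathcal{H})^{\frac12}u\|_{L^q(I;L^r)}$ is exactly the $L^q(I;W_V^{1,r})$ norm demanded in the statement, so the claimed output regularity for all $L^2$-admissible $(q,r)$ will follow directly from the Strichartz bounds applied to the fixed point, with no appeal to norm equivalence on the output side.

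The verification that $\Phi$ maps $X_{T,M}$ into itself rests on Theorem \ref{Strichartz estimate}. Commuting $(1+\mathcal{H})^{\frac12}$ through the propagator and applying the homogeneous estimate \eqref{010} controls the linear part by $C\|u_0\|_{H^1}$. For the Duhamel term I would commute $(1+\mathcal{H})^{\frac12}$ inside, apply the inhomogeneous estimate \eqref{011}, and thereby reduce matters to estimating the nonlinearity in the dual norm $\big\|(1+\mathcal{H})^{\frac12}(|u|^{p-1}u)\big\|_{S'(L^2)}$. The crucial observation is that the dual Strichartz exponents satisfy $r'\in[\tfrac65,2]\subset(1,3)$, so the norm equivalence of Lemma \ref{Norm equivalence} applies at $s=1$ and gives
\begin{align*}
\big\|(1+\mathcal{H})^{\frac12}(|u|^{p-1}u)\big\|_{L^{r'}}\sim\big\||u|^{p-1}u\big\|_{W^{1,r'}}.
\end{align*}
The flat Sobolev norm is then handled by the fractional calculus Lemma \ref{Fractional calculus} applied to $G(z)=|z|^{p-1}z$ (with $|G'(z)|\lesssim|z|^{p-1}$), together with Hölder in space, yielding a bound of the form $\|u\|_{L^{a}}^{p-1}\|u\|_{W^{1,r}}$ on a suitable admissible pair $(q,r)$ with $r$ inside the range permitted by Lemma \ref{Sobolev inequality}. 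Because $1\le p<5$ is energy-subcritical, the time exponents are strictly off the endpoint, so integrating in time produces a gain $T^{\theta}$ for some $\theta>0$. Collecting the estimates gives $\|\Phi(u)\|_{X_{T,M}}\le C\|u_0\|_{H^1}+CT^{\theta}M^{p}$; choosing $M=2C\|u_0\|_{H^1}$ and then $T$ so small that $CT^{\theta}M^{p-1}\le\tfrac12$ makes $\Phi$ a self-map.

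For the contraction I would estimate $d(\Phi(u),\Phi(v))=\|\Phi(u)-\Phi(v)\|_{S(L^2)}$ using \eqref{011} \emph{without} derivatives, the pointwise bound $\big||u|^{p-1}u-|v|^{p-1}v\big|\lesssim(|u|^{p-1}+|v|^{p-1})|u-v|$, Hölder, and the uniform bound $\|u\|_{X_{T,M}},\|v\|_{X_{T,M}}\le M$, again extracting a factor $T^{\theta}$. Shrinking $T$ if necessary gives $d(\Phi(u),\Phi(v))\le\tfrac12\,d(u,v)$, and the Banach fixed point theorem produces a unique fixed point $u\in X_{T,M}$; standard persistence arguments upgrade uniqueness to the full solution class and yield $u\in C(I;H^1)$ from the continuity of the Strichartz integrals. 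The main obstacle is precisely that $\mathcal{H}=-\Delta+V$ does not commute with $\nabla$, so the $H^1$-regularity cannot be tracked by placing $\nabla$ directly on the equation and invoking the usual Leibniz rule; the resolution is to carry the derivative through the spectrally adapted operator $(1+\mathcal{H})^{\frac12}$ and transfer to the flat Sobolev scale via Lemma \ref{Norm equivalence} only on the nonlinear (dual) side, where the exponents $r'<3$ keep that equivalence valid. Ensuring that every Lebesgue exponent generated by the Strichartz and Hölder bookkeeping remains inside this admissible window is the delicate accounting that makes the scheme close.
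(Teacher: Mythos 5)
Your proposal is correct and follows essentially the same route as the paper's own proof: a contraction mapping for the Duhamel map on a ball defined by the $\|(1+\mathcal{H})^{1/2}\cdot\|_{S(L^2;I)}$ norm, with the weaker metric $\|u-v\|_{S(L^2;I)}$, using the Strichartz estimates of Theorem \ref{Strichartz estimate}, the norm equivalence of Lemma \ref{Norm equivalence} to transfer the derivative of the nonlinearity to the flat Sobolev scale (where $r'<3$), and a H\"older-in-time factor $T^{\theta}$ exploiting subcriticality to close both the self-map and contraction estimates. The only cosmetic differences are that the paper fixes the dual pair $L^2(I;L^{6/5})$ and an explicit time exponent $\beta$ where you leave the bookkeeping generic, and it uses the integer-order chain rule directly where you invoke Lemma \ref{Fractional calculus}.
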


\begin{proof}
We define a function space
\begin{align*}
E:=\left\{u:\|(1+\mathcal{H})^\frac{1}{2}u\|_{S(L^2;I)}\leq2c\|u_0\|_{H^1}\right\}
\end{align*}
and a metric
\begin{align*}
d(u,v):=\|u-v\|_{S(L^2;I)}
\end{align*}
for $u,v\in E$. Then, $(E,d)$ is a complete metric space. We set a map
\begin{align*}
\Phi_{u_0}(u):=e^{-it\mathcal{H}}u_0+i\int_0^te^{-i(t-s)\mathcal{H}}(|u|^{p-1}u)(s)ds
\end{align*}
for $u\in E$. We take a number $\beta$ satisfying
\begin{equation*}
\notag \max\left\{2,\frac{4}{5-p}\right\}<\beta<
\begin{cases}
&\hspace{-0.4cm}\displaystyle{\frac{4}{3-p}}\hspace{0.5cm}\left(1<p<3\right),\\
&\hspace{-0.15cm}\infty\hspace{0.77cm}\left(3\leq p<5\right).
\end{cases}
\end{equation*}
Using Proposition \ref{Strichartz estimate} and Sobolev's embedding, we have
\begin{align*}
\|(1+\mathcal{H})^\frac{1}{2}\Phi_{u_0}(u)\|_{S(L^2;I)}
	&\leq\|e^{-it\mathcal{H}}(1+\mathcal{H})^\frac{1}{2}u_0\|_{S(L^2;I)}+\left\|\int_0^te^{-i(t-s)\mathcal{H}}(1+\mathcal{H})^\frac{1}{2}(|u|^{p-1}u)(s)ds\right\|_{S(L^2;I)}\\
	&\leq c\|(1+\mathcal{H})^\frac{1}{2}u_0\|_{L^2}+c\|(1+\mathcal{H})^\frac{1}{2}(|u|^{p-1}u)\|_{L^2(I;L^\frac{6}{5})}\\
	&\leq c\|u_0\|_{H^1}+c\||u|^{p-1}u\|_{L^2(I;W^{1,\frac{6}{5}})}\\
	&\leq c\|u_0\|_{H^1}+c\left\|\||u|^{p-1}\|_{L^3}\|u\|_{H^1}\right\|_{L^2(I)}\\
	&\leq c\|u_0\|_{H^1}+c\left\|\|u\|_{L^{3(p-1)}}^{p-1}\|(1+\mathcal{H})^\frac{1}{2}u\|_{L^2}\right\|_{L^2(I)}\\
%	&\leq c\|u_0\|_{H^1}+c\|1\|_{L^\beta(I)}\left\|\|u\|_{L^{3(p-1)}}^{p-1}\right\|_{L^\frac{2\beta}{\beta-2}(I)}\|(1+\mathcal{H})^\frac{1}{2}u\|_{L^\infty(I;L^2)}\\
	&\leq c\|u_0\|_{H^1}+c\,T^\frac{1}{\beta}\|u\|_{L^\frac{2\beta(p-1)}{\beta-2}(I;L^{3(p-1)})}^{p-1}\|(1+\mathcal{H})^\frac{1}{2}u\|_{L^\infty(I; L^2)}\\
	&\leq c\|u_0\|_{H^1}+c\,T^\frac{1}{\beta}\|u\|_{L^\frac{2\beta(p-1)}{\beta-2}(I;W^{1,\frac{6(p-1)\beta}{\beta(3p-5)+4}})}^{p-1}\|(1+\mathcal{H})^\frac{1}{2}u\|_{L^\infty(I;L^2)}\\
	&\leq c\|u_0\|_{H^1}+c\,T^\frac{1}{\beta}\|(1+\mathcal{H})^\frac{1}{2}u\|_{L^\frac{2\beta(p-1)}{\beta-2}(I;L^\frac{6(p-1)\beta}{\beta(3p-5)+4})}^{p-1}\|(1+\mathcal{H})^\frac{1}{2}u\|_{L^\infty(I;L^2)}\\
	&\leq c\|u_0\|_{H^1}+c\,T^\frac{1}{\beta}\|(1+\mathcal{H})^\frac{1}{2}u\|_{S(L^2;I)}^p\\
%	&\leq c\|u_0\|_{H^1}+c(2c)^pT^\frac{1}{\beta}\|u_0\|_{H^1}^p\\
	&\leq \left(1+(2c)^pT^\frac{1}{\beta}\|u_0\|_{H^1}^{p-1}\right)c\|u_0\|_{H^1}
\end{align*}
and
\begin{align*}
\|\Phi_{u_0}(u)-\Phi_{u_0}(v)\|_{S(L^2;I)}
	&\leq\left\|\int_0^te^{-i(t-s)\mathcal{H}}(|u|^{p-1}u-|v|^{p-1}v)(s)ds\right\|_{S(L^2;I)}\\
	&\leq c\||u|^{p-1}u-|v|^{p-1}v\|_{L^2(I;L^\frac{6}{5})}\\
	&\leq c\|(|u|^{p-1}+|v|^{p-1})|u-v|\|_{L^2(I;L^\frac{6}{5})}\\
	&\leq c\left\|\left\||u|^{p-1}+|v|^{p-1}\right\|_{L^3}\|u-v\|_{L^2}\right\|_{L^2(I)}\\
	&\leq c\left\|\left(\|u\|_{L^{3(p-1)}}^{p-1}+\|v\|_{L^{3(p-1)}}^{p-1}\right)\|u-v\|_{L^2}\right\|_{L^2(I)}\\
	&\leq c\,T^\frac{1}{\beta}\left(\|u\|_{L^\frac{2\beta(p-1)}{\beta-2}(I;L^{3(p-1)})}^{p-1}+\|v\|_{L^\frac{2\beta(p-1)}{\beta-2}(I;L^{3(p-1)})}^{p-1}\right)\|u-v\|_{L^\infty(I;L^2)}\\
	&\leq c\,T^\frac{1}{\beta}\left(\|(1+\mathcal{H})^\frac{1}{2}u\|_{S(L^2;I)}^{p-1}+\|(1+\mathcal{H})^\frac{1}{2}v\|_{S(L^2;I)}^{p-1}\right)\|u-v\|_{S(L^2;I)}\\
	&\leq (2c)^pT^\frac{1}{\beta}\|u_0\|_{H^1}^{p-1}\|u-v\|_{S(L^2;I)}.
\end{align*}
Here, we take $T>0$ sufficiently small such as
\begin{align*}
(2c)^pT^\frac{1}{\beta}\|u_0\|_{H^1}^{p-1}<1.
\end{align*}
Then, $\Phi_{u_0}$ is a contraction map on $E$ and there exists a unique solution to (NLS$_V$) in $[-T,T]$.
\end{proof}

The following theorem holds  by \cite[Theorem 4.3.1]{02}.

\begin{theorem}
Let $1\leq p<5$, $V\in L^\sigma$ for some $\sigma>\frac{3}{2}$. For any $u_0\in H^1$, there exists $T=T(u_0,V)>0$ and a unique solution to (NLS$_V$) on a time interval $I=[-T,T]$.
\end{theorem}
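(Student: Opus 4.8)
The plan is to invoke the abstract $H^1$ local well-posedness theory of Cazenave \cite[Theorem 4.3.1]{02}, which is built on the \emph{free} Schr\"odinger group $e^{it\Delta}$ and its Strichartz estimates. This framework is forced on us by the hypotheses: since here $V$ is only assumed to lie in $L^\sigma$ for some $\sigma>\frac32$ (and not in $\mathcal{K}_0\cap L^\frac32$), the $\mathcal{H}$-adapted Strichartz estimates of Theorem \ref{Strichartz estimate} are unavailable, so we cannot reproduce the fixed-point scheme of Theorem \ref{Local well-posedness 1} around $e^{-it\mathcal{H}}$. Instead we rewrite (NLS$_V$) in Duhamel form around the free group,
\[
u(t)=e^{it\Delta}u_0-i\int_0^te^{i(t-s)\Delta}\bigl(Vu-|u|^{p-1}u\bigr)(s)\,ds,
\]
and regard $g(u):=-Vu+|u|^{p-1}u$ as the nonlinearity, so that it suffices to check that the two summands satisfy the structural hypotheses of \cite[Theorem 4.3.1]{02}.

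For the potential term the key is the choice of exponent. Setting $r:=\frac{2\sigma}{\sigma-1}$ one has $\frac{1}{r'}=\frac{1}{\sigma}+\frac{1}{r}$, and, because $\frac32<\sigma<\infty$, the exponent satisfies $2<r<6$; in particular $r$ lies in the $H^1$-subcritical (non-endpoint) range and $H^1\hookrightarrow L^r$ in three dimensions. H\"older's inequality then yields the \emph{global} Lipschitz bound
\[
\|Vu-Vv\|_{L^{r'}}\le\|V\|_{L^\sigma}\|u-v\|_{L^r}\lesssim\|V\|_{L^\sigma}\|u-v\|_{H^1},
\]
so $u\mapsto Vu$ maps $H^1$ continuously into $L^{r'}$ with a constant depending only on $\|V\|_{L^\sigma}$; moreover this term is the gradient of the quadratic functional $\frac12\int_{\mathbb{R}^3}V|u|^2\,dx$, which supplies the variational structure required by \cite[Theorem 4.3.1]{02}.

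For the power term, $1\le p<5$ places $|u|^{p-1}u$ in the standard energy-subcritical class treated in \cite[Chapter 4]{02}: with a suitable admissible $\widetilde r\in[2,6)$ one has the locally Lipschitz estimate $\||u|^{p-1}u-|v|^{p-1}v\|_{L^{\widetilde r'}}\lesssim(\|u\|_{L^{\widetilde r}}^{p-1}+\|v\|_{L^{\widetilde r}}^{p-1})\|u-v\|_{L^{\widetilde r}}$, uniform on bounded sets of $H^1$. With both summands verified, \cite[Theorem 4.3.1]{02} applies and produces, for each $u_0\in H^1$, a time $T=T(\|u_0\|_{H^1},V)>0$ and a unique solution $u\in C([-T,T];H^1)$ (belonging, as usual, to the relevant free Strichartz spaces), which is the assertion.

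The only genuinely nontrivial point — and the step I expect to require the most care — is the treatment of the potential: one must absorb $Vu$ into the nonlinearity and verify that the mere integrability $V\in L^\sigma$, $\sigma>\frac32$, suffices to place it in Cazenave's framework. The threshold $\sigma=\frac32$ corresponds exactly to the borderline $r=6$, the energy-critical (hence excluded) endpoint, which is precisely why the strict inequality $\sigma>\frac32$ is needed; everything else reduces to a routine contraction-mapping argument on the free Strichartz spaces.
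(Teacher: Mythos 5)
Your proposal is correct and takes essentially the same route as the paper: the paper's entire proof of this theorem is the single citation of \cite[Theorem 4.3.1]{02}, which is exactly the reduction you make. The only difference is that you also verify Cazenave's structural hypotheses for the two terms $-Vu$ and $|u|^{p-1}u$ (the choice $r=\frac{2\sigma}{\sigma-1}\in(2,6)$ via H\"older and $H^1\hookrightarrow L^r$, and the standard subcritical power estimate), details the paper leaves implicit in that citation.
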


\begin{theorem}[Local well-posedness in $H^1\cap |x|^{-1}L^2$]\label{Local well-posedness 2}
Let $1\leq p<5$. Let ``$V\in\mathcal{K}_0\cap L^\frac{3}{2}$ and $\|V_-\|_{\mathcal{K}}<4\pi$'' or ``$V\in L^\sigma$ for some $\sigma>\frac{3}{2}$''. Let $I\ni0$ be a time interval, $u_0\in H^1$, and $u$ be a $H^1$-solution to (NLS$_V$) on $I$. If $|\cdot|u_0\in L^2$, then a map $t\mapsto|\cdot|u(t,\cdot)$ belongs to $C(I;L^2)$.
\end{theorem}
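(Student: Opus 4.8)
The plan is to propagate the weighted bound by a regularized virial argument, exploiting the fact that the first-order localized virial identity \eqref{022} is insensitive to \emph{both} the potential and the nonlinearity. Fix a smooth, radial, bounded truncation $\omega_m$ of $|x|^2$: choose $\omega_m\in C^\infty(\mathbb{R}^3)$ with $0\le\omega_m$, $\omega_m(x)=|x|^2$ for $|x|\le m$, $\omega_m$ constant for $|x|\ge 2m$, and satisfying the structural bound $|\nabla\omega_m(x)|^2\le C\omega_m(x)$ with $C$ independent of $m$ (writing $\omega_m(x)=m^2g(|x|/m)$ with $g(r)=r^2$ for $r\le1$ and $g$ leveling off smoothly on $[1,2]$ makes $|\nabla\omega_m|^2/\omega_m=g'(r)^2/g(r)$ uniformly bounded). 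Since $\omega_m$ is bounded, $I_m(t):=\int_{\mathbb{R}^3}\omega_m|u(t)|^2\,dx$ is finite for each $t$; as $u\in C(I;H^1)$ solves (NLS$_V$), differentiating under the integral and integrating by parts (equivalently, Proposition \ref{Virial identity} with $\omega=\omega_m$) gives
\begin{align*}
I_m'(t)=2\,\mathrm{Im}\int_{\mathbb{R}^3}\overline{u}\,\nabla u\cdot\nabla\omega_m\,dx,
\end{align*}
with no contribution from $V|u|^2$ or $|u|^{p+1}$, both of which are pointwise real and hence drop out upon taking the imaginary part.

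By Cauchy--Schwarz and the structural bound,
\begin{align*}
|I_m'(t)|\le 2\|\nabla u(t)\|_{L^2}\left(\int_{\mathbb{R}^3}|\nabla\omega_m|^2|u|^2\,dx\right)^{1/2}\le 2\sqrt{C}\,\|\nabla u(t)\|_{L^2}\,I_m(t)^{1/2},
\end{align*}
so that $\frac{d}{dt}I_m(t)^{1/2}\le \sqrt{C}\,\|\nabla u(t)\|_{L^2}$. Integrating from $0$ and using $I_m(0)^{1/2}\le\||x|u_0\|_{L^2}$ yields
\begin{align*}
I_m(t)^{1/2}\le\||x|u_0\|_{L^2}+\sqrt{C}\int_0^t\|\nabla u(s)\|_{L^2}\,ds,
\end{align*}
a bound uniform in $m$ and locally bounded on $I$, since $u\in C(I;H^1)$ makes $\|\nabla u(\cdot)\|_{L^2}$ locally bounded. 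Letting $m\to\infty$ and invoking monotone convergence ($\omega_m\uparrow|x|^2$) shows $\||x|u(t)\|_{L^2}<\infty$ for every $t\in I$, with the same quantitative estimate.

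For the continuity claim $t\mapsto|x|u(t)\in C(I;L^2)$, I would combine two observations. The uniform bound above makes $I_m'$ uniformly bounded on each compact subinterval, so $\{I_m\}$ is equi-Lipschitz there; together with monotone pointwise convergence this forces $I_m\to I:=\||x|u(\cdot)\|_{L^2}^2$ uniformly on compacts, whence $t\mapsto\||x|u(t)\|_{L^2}$ is continuous. Separately, for every $\varphi\in C_c^\infty$ the map $t\mapsto(|x|u(t),\varphi)_{L^2}=(u(t),|x|\varphi)_{L^2}$ is continuous because $u\in C(I;L^2)$; as $\{|x|u(t)\}$ is locally bounded in $L^2$ and $C_c^\infty$ is dense, $t\mapsto|x|u(t)$ is weakly continuous. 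Norm continuity together with weak continuity yields strong continuity in the Hilbert space $L^2$, which is the assertion. The main obstacle is precisely this last upgrade from scalar norm-continuity to strong vector-valued continuity, together with the rigorous justification of the differentiation of $I_m$ (and hence of \eqref{022}) for a merely $H^1$ solution; both are handled by the boundedness of $\omega_m$, the fact that $\partial_tu\in C(I;H^{-1})$ with $\omega_m u\in C(I;H^1)$, and, if convenient, a standard approximation of $u_0$ by smoother data followed by passage to the limit.
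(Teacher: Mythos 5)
Your proposal is correct, and its core mechanism is the same as the paper's: regularize the weight $|x|^2$ so that the weighted mass is finite a priori, differentiate in time using the first-order virial identity \eqref{022} (under which both the potential term and the nonlinearity drop out upon taking imaginary parts), apply Cauchy--Schwarz together with the structural bound $|\nabla(\text{weight})|^2\lesssim\text{weight}$ to get a Gronwall-type inequality for the square root of the functional, and then pass to the limit in the regularization. The differences are two. First, the paper regularizes with a Gaussian damping factor, working with $f_\varepsilon(t)=\|e^{-\varepsilon|x|^2}|x|u(t)\|_{L^2}^2$ and sending $\varepsilon\searrow0$ via Fatou's lemma, whereas you truncate the weight at radius $\sim m$; these are interchangeable, with one small correction on your side: the family $\omega_m$ need not be monotone in $m$ as constructed, so ``monotone convergence'' should be replaced by Fatou's lemma (pointwise convergence is automatic, since $\omega_m(x)=|x|^2$ as soon as $m\geq|x|$) or by dominated convergence using $\omega_m\leq|x|^2$. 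Second, and more substantively, you actually prove the asserted continuity of $t\mapsto|x|u(t)$ in $L^2$: the paper's written proof stops at the uniform bound and the conclusion $xu(t)\in L^2$ for every $t\in I$, deferring the continuity statement to the argument of Cazenave's Lemma 6.5.2. Your upgrade --- equi-Lipschitz $I_m$ plus pointwise convergence gives uniform convergence on compact subintervals, hence continuity of the norm; weak continuity follows by testing against $|x|\varphi$ with $\varphi\in C_c^\infty$, local boundedness, and density; and weak convergence together with convergence of norms implies strong convergence in the Hilbert space $L^2$ --- is correct, and it supplies precisely the step the paper leaves implicit, so your proof is complete where the paper's is abbreviated.
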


The proof of this theorem is based on the argument in \cite[Lemma 6.5.2]{02}.

\begin{proof}
We set $I=[0,T)$ with $0<T\leq\infty$. Let $\varepsilon>0$. We define a function
\begin{align*}
f_\varepsilon(t)=\|e^{-\varepsilon|x|^2}|x|u(t)\|_{L^2}^2.
\end{align*}
Then, we have
\begin{align*}
f'_{\varepsilon}(t)
	&=\frac{d}{dt}\int_{\mathbb{R}^3}e^{-2\varepsilon|x|^2}|x|^2|u(t,x)|^2dt=2\text{Re}\int_{\mathbb{R}^3}e^{-2\varepsilon|x|^2}|x|^2\partial_tu\overline{u}dx\\
	&=2\text{Re}\int_{\mathbb{R}^3}e^{-2\varepsilon|x|^2}|x|^2(i\Delta u-iVu+i|u|^{p-1}u)\overline{u}dx=2\text{Im}\int_{\mathbb{R}^3}\nabla(e^{-2\varepsilon|x|^2}|x|^2)\cdot\nabla u\overline{u}dx\\
	&=4\text{Im}\int_{\mathbb{R}^3}(1-2\varepsilon|x|^2)e^{-2\varepsilon|x|^2}x\cdot\nabla u\overline{u}dx.
\end{align*}
Integrating this identity over $[0,t]$,
\begin{align*}
f_\varepsilon(t)=f_\varepsilon(0)+4\int_0^t\text{Im}\int_{\mathbb{R}^3}\{e^{-\varepsilon|x|^2}(1-2\varepsilon|x|^2)\}e^{-\varepsilon|x|^2}x\cdot\nabla u\overline{u}dxdt.
\end{align*}
Since $e^{-\varepsilon|x|^2}(1-2\varepsilon|x|^2)$ is bounded in $x$ and $\varepsilon$, and $\|e^{-\varepsilon|x|^2}|x|u_0\|_{L^2}\leq\|xu_0\|_{L^2}$, it follows that
\begin{align*}
f_\varepsilon(t)\leq\|xu_0\|_{L^2}^2+C\int_0^t\|\nabla u(s)\|_{L^2}\sqrt{f_\varepsilon(s)}ds.
\end{align*}
This inequality deduces
\begin{align*}
\sqrt{f_\varepsilon(t)}\leq\|xu_0\|_{L^2}+\frac{C}{2}\int_0^t\|\nabla u(s)\|_{L^2}ds
\end{align*}
for any $t\in I$. We take a limit as $\varepsilon\searrow0$ and use Fatou's lemma. Then, we see that $xu(t)\in L^2$ for any $t\in I$.
\end{proof}

\section{Proof of scattering part in Theorem \ref{Scattering versus blowing-up or growing-up}}

In this section, we prove the scattering part in Theorem \ref{Scattering versus blowing-up or growing-up}.

\begin{proposition}[Coercivity \Rnum{1}]\label{Coercivity1}
Let $\frac{7}{3}<p<5$, $V\geq0$, and $V\in\mathcal{K}_0\cap L^\frac{3}{2}$. Assume that $u_0$ satisfies
\begin{align}
M[u_0]^{1-s_c}E_V[u_0]^{s_c}<(1-\delta)M[Q]^{1-s_c}E_0[Q]^{s_c} \label{035}
\end{align}
for some $\delta>0$. Then, there exist $\delta'=\delta'(\delta)>0$, $c=c(\delta,\|u_0\|_{L^2})>0$, and $R=R(\delta,\|u_0\|_{L^2})>0$ such that if $\|u_0\|_{L^2}^{1-s_c}\|\nabla u_0\|_{L^2}^{s_c}<\|Q\|_{L^2}^{1-s_c}\|\nabla Q\|_{L^2}^{s_c}$, then
\begin{itemize}
\item[(i)] $\|u(t)\|_{L^2}^{1-s_c}\|\nabla u(t)\|_{L^2}^{s_c}<(1-2\delta')^\frac{1}{p-1}\|Q\|_{L^2}^{1-s_c}\|\nabla Q\|_{L^2}^{s_c}$,\\
\item[(ii)] $\|\nabla u(t)\|_{L^2}^2-\frac{3(p-1)}{2(p+1)}\|u(t)\|_{L^{p+1}}^{p+1}\geq c\|u(t)\|_{L^{p+1}}^{p+1}$,\\
\item[(iii)] $\|\chi_Ru(t)\|_{L^2}^{1-s_c}\|\nabla(\chi_Ru(t))\|_{L^2}^{s_c}<(1-\delta')^\frac{1}{p-1}\|Q\|_{L^2}^{1-s_c}\|\nabla Q\|_{L^2}^{s_c}$
\end{itemize}
hold, where $\chi_R$ is defined as \eqref{072}.
\end{proposition}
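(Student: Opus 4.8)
The plan is to reduce the three estimates to the potential-free sharp Gagliardo--Nirenberg inequality \eqref{005}, using the hypothesis $V\geq0$ only through the one-line bound $E_0[u]\leq E_V[u]$ (valid since $E_V[u]=E_0[u]+\tfrac12\int_{\mathbb{R}^3}V|u|^2\,dx$). I first record that, by conservation of mass and energy together with $V\geq0$,
\begin{align*}
M[u(t)]^{1-s_c}E_0[u(t)]^{s_c}\leq M[u_0]^{1-s_c}E_V[u_0]^{s_c}<(1-\delta)M[Q]^{1-s_c}E_0[Q]^{s_c}
\end{align*}
for every $t$ at which $E_0[u(t)]\geq0$; this positivity will be supplied automatically in the region controlled below.

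Set $y(t):=\|u(t)\|_{L^2}^{1-s_c}\|\nabla u(t)\|_{L^2}^{s_c}$ and $y^\ast:=\|Q\|_{L^2}^{1-s_c}\|\nabla Q\|_{L^2}^{s_c}$. Inserting \eqref{005} into $E_0[u]$, using that $\|u(t)\|_{L^2}$ is conserved and the explicit value of $C_{\text{GN}}$ in \eqref{021}, I claim all mass factors cancel after taking the $s_c$-th power, yielding the \emph{mass-independent} lower bound $M[u]^{1-s_c}E_0[u]^{s_c}\ge F(y)$ with
\begin{align*}
F(y):=\left(\tfrac12\,y^{2/s_c}-\tfrac{C_{\text{GN}}}{p+1}\,y^{3(p-1)/(2s_c)}\right)^{s_c},
\end{align*}
valid wherever the bracket is nonnegative. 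The algebraic facts I would verify, using $s_c=\frac{3p-7}{2(p-1)}$ and $1-s_c=\frac{5-p}{2(p-1)}$, are that the bracket is strictly increasing on $[0,y^\ast]$, is maximized at $y^\ast$, and that $F(y^\ast)=M[Q]^{1-s_c}E_0[Q]^{s_c}$ (equality in \eqref{005} for $Q$). Combined with the previous display this gives $F(y(t))<(1-\delta)F(y^\ast)$ as long as $y(t)$ stays in the range where $F$ is defined.

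For (i) I run the usual continuity/bootstrap argument. Since $y(0)<y^\ast$ and $t\mapsto y(t)$ is continuous (the solution lies in $C(I;H^1)$), if $y$ ever attained $y^\ast$ at a first time the bracket would still be nonnegative there and we would get the contradiction $F(y^\ast)<(1-\delta)F(y^\ast)$ with $F(y^\ast)>0$; hence $y(t)<y^\ast$ for all $t$, and in particular $E_0[u(t)]\geq0$ throughout, closing the logical loop. The strict gap $F(y(t))<(1-\delta)F(y^\ast)$ and the monotonicity of $F$ on $[0,y^\ast]$ confine $y(t)$ below the unique $y_1<y^\ast$ with $F(y_1)=(1-\delta)F(y^\ast)$; defining $\delta'=\delta'(\delta)>0$ by $(1-2\delta')^{1/(p-1)}y^\ast:=y_1$ yields (i). Part (ii) is then a direct consequence: by \eqref{005} and the identity $\tfrac{3(p-1)}{2(p+1)}C_{\text{GN}}=\big(\|Q\|_{L^2}^{(5-p)/2}\|\nabla Q\|_{L^2}^{(3p-7)/2}\big)^{-1}$ one computes $\tfrac{3(p-1)}{2(p+1)}\|u\|_{L^{p+1}}^{p+1}\le (y(t)/y^\ast)^{p-1}\|\nabla u\|_{L^2}^2<(1-2\delta')\|\nabla u\|_{L^2}^2$, so $\|\nabla u\|_{L^2}^2-\tfrac{3(p-1)}{2(p+1)}\|u\|_{L^{p+1}}^{p+1}\ge\tfrac{2\delta'}{1-2\delta'}\cdot\tfrac{3(p-1)}{2(p+1)}\|u\|_{L^{p+1}}^{p+1}$, which is (ii); this is precisely why the exponent $1/(p-1)$ appears in (i).

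For (iii), note that (i) and conservation of mass bound $\|\nabla u(t)\|_{L^2}$ and $\|u(t)\|_{L^2}=\|u_0\|_{L^2}$ uniformly in $t$. Writing $\nabla(\chi_R u)=\chi_R\nabla u+(\nabla\chi_R)u$ with $\|\nabla\chi_R\|_{L^\infty}\lesssim R^{-1}$ gives $\|\chi_R u\|_{L^2}\le\|u_0\|_{L^2}$ and $\|\nabla(\chi_R u)\|_{L^2}\le\|\nabla u\|_{L^2}+CR^{-1}\|u_0\|_{L^2}$; using subadditivity of $t\mapsto t^{s_c}$ ($0<s_c<1$) I obtain $\|\chi_R u\|_{L^2}^{1-s_c}\|\nabla(\chi_R u)\|_{L^2}^{s_c}\le y(t)+C'R^{-s_c}\|u_0\|_{L^2}$ uniformly in $t$. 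Choosing $R=R(\delta,\|u_0\|_{L^2})$ large enough moves the uniform bound from $(1-2\delta')^{1/(p-1)}y^\ast$ down to $(1-\delta')^{1/(p-1)}y^\ast$, which is (iii). I expect the main obstacle to be the sign-and-power bookkeeping: keeping $E_0[u(t)]\geq0$ valid throughout the bootstrap so that the $s_c$-th power and the reduction to $F$ are legitimate, and verifying the exact cancellation of mass factors so that $F$ is genuinely mass-independent with $F(y^\ast)=M[Q]^{1-s_c}E_0[Q]^{s_c}$. Once $F$ is correctly normalized, the potential enters only through $E_0\le E_V$, and the localization in (iii) is a harmless perturbation.
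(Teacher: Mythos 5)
Your proposal is correct and follows essentially the same route as the paper: insert the sharp Gagliardo--Nirenberg inequality into $E_0[u]\le E_V[u]$ (using $V\ge 0$ and the conservation laws), reduce to a one-variable function of the scale-invariant quantity $y(t)$ whose maximum is attained at the ground-state value with $F(y^\ast)=M[Q]^{1-s_c}E_0[Q]^{s_c}$, run the continuity argument to get the quantitative gap in (i), then deduce (ii) from (i) via the explicit constant $C_{\text{GN}}$ and (iii) by treating the cutoff as an $\mathcal{O}(R^{-s_c})$ perturbation. Your $F$ is just the unnormalized version of the paper's function $g$, and your product-rule bound for $\|\nabla(\chi_R u)\|_{L^2}$ replaces the paper's integration-by-parts identity \eqref{061}; both are cosmetic differences, and your write-up actually makes explicit the bootstrap (positivity of $E_0[u(t)]$ along the flow) that the paper leaves implicit.
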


\begin{proof}
First, we will prove (i). By $V\geq0$, Gagliardo-Nirenberg's inequality, and \eqref{021}, we have
\begin{align*}
(1-\delta)^\frac{1}{s_c}M
	&[Q]^\frac{1-s_c}{s_c}E_0[Q]>M[u_0]^\frac{1-s_c}{s_c}E_V[u_0]\\
%	&=\|u(t)\|_{L^2}^\frac{2(1-s_c)}{s_c}\left(\frac{1}{2}\|\nabla u(t)\|_{L^2}^2+\frac{1}{2}\int_{\mathbb{R}^3}V(x)|u(t,x)|^2dx-\frac{1}{p+1}\|u(t)\|_{L^p+1}^{p+1}\right)\\
	&\geq\|u(t)\|_{L^2}^\frac{2(1-s_c)}{s_c}\left(\frac{1}{2}\|\nabla u(t)\|_{L^2}^2-\frac{1}{p+1}C_\text{GN}\|u(t)\|_{L^2}^\frac{5-p}{2}\|\nabla u(t)\|_{L^2}^\frac{3(p-1)}{2}\right)\\
%	&=\frac{1}{2}\|u(t)\|_{L^2}^\frac{2(5-p)}{3p-7}\|\nabla u(t)\|_{L^2}^2-\frac{1}{p+1}C_\text{GN}\|u(t)\|_{L^2}^\frac{3(5-p)(p-1)}{2(3p-7)}\|\nabla u(t)\|_{L^2}^\frac{3(p-1)}{2}\\
	&=\frac{1}{2}\|u(t)\|_{L^2}^\frac{2(5-p)}{3p-7}\|\nabla u(t)\|_{L^2}^2-\frac{2}{3(p-1)}\cdot\frac{\|u(t)\|_{L^2}^\frac{3(5-p)(p-1)}{2(3p-7)}\|\nabla u(t)\|_{L^2}^\frac{3(p-1)}{2}}{\|Q\|_{L^2}^\frac{5-p}{2}\|\nabla Q\|_{L^2}^\frac{3p-7}{2}}
\end{align*}
and henca,
\begin{align*}
(1-\delta)^\frac{1}{s_c}\geq\frac{3(p-1)}{3p-7}\cdot\frac{\|u(t)\|_{L^2}^\frac{2(5-p)}{3p-7}\|\nabla u(t)\|_{L^2}^2}{\|Q\|_{L^2}^\frac{2(5-p)}{3p-7}\|\nabla Q\|_{L^2}^2}-\frac{4}{3p-7}\cdot\frac{\|u(t)\|_{L^2}^\frac{3(5-p)(p-1)}{2(3p-7)}\|\nabla u(t)\|_{L^2}^\frac{3(p-1)}{2}}{\|Q\|_{L^2}^\frac{3(5-p)(p-1)}{2(3p-7)}\|\nabla Q\|_{L^2}^\frac{3(p-1)}{2}}.
\end{align*}
Here, we consider a function $g(y)=\frac{3(p-1)}{3p-7}y^2-\frac{4}{3p-7}y^\frac{3(p-1)}{2}$. Then, $g'(y)=\frac{6(p-1)}{3p-7}y-\frac{6(p-1)}{3p-7}y^\frac{3p-5}{2}$. Solving $g'(y)=0$, we obtain $y=0,1$. We set $y_0=0$ and $y_1=1$. Then, $g$ has a local minimum at $y_0$ and a local maximum at $y_1$. Also, $g(y_1)=1$. Combining these facts and the assumption of Proposition \ref{Coercivity1} (i), there exists $\delta'=\delta'(\delta)>0$ such that
\begin{align*}
\|u(t)\|_{L^2}^\frac{5-p}{3p-7}\|\nabla u(t)\|_{L^2}<(1-2\delta')^\frac{2}{3p-7}\|Q\|_{L^2}^\frac{5-p}{3p-7}\|\nabla Q\|_{L^2},
\end{align*}
%that is,
%\begin{align*}
%\|u(t)\|_{L^2}^{1-s_c}\|\nabla u(t)\|_{L^2}^{s_c}<(1-2\delta')^\frac{1}{p-1}\|Q\|_{L^2}^{1-s_c}\|\nabla Q\|_{L^2}^{s_c},
%\end{align*}
which completes the proof of Proposition \ref{Coercivity1} (i) and implies the uniform estimate \eqref{055} in Theorem \ref{Scattering versus blowing-up or growing-up} (i).\\
Second, we prove (ii). Using Gagliardo-Nirenberg's inequality, this proposition (i), and \eqref{021},
\begin{align*}
E_0[u(t)]
%	&=\frac{1}{2}\|\nabla u(t)\|_{L^2}^2-\frac{1}{p+1}\|u(t)\|_{L^{p+1}}^{p+1}\\
	&\geq\frac{1}{2}\|\nabla u(t)\|_{L^2}^2-\frac{1}{p+1}C_{GN}\|u(t)\|_{L^2}^\frac{5-p}{2}\|\nabla u(t)\|_{L^2}^\frac{3(p-1)}{2}\\
%	&=\|\nabla u(t)\|_{L^2}^2\left(\frac{1}{2}-\frac{1}{p+1}C_{GN}\|u(t)\|_{L^2}^\frac{5-p}{2}\|\nabla u(t)\|_{L^2}^\frac{3p-7}{2}\right)\\
	&>\|\nabla u(t)\|_{L^2}^2\left(\frac{1}{2}-\frac{1}{p+1}C_{GN}(1-2\delta')\|Q\|_{L^2}^\frac{5-p}{2}\|\nabla Q\|_{L^2}^\frac{3p-7}{2}\right)\\
%	&=\|\nabla u(t)\|_{L^2}^2\left(\frac{1}{2}-\frac{2}{3(p-1)}(1-2\delta')\right)\\
	&=\left(\frac{3p-7}{6(p-1)}+\frac{4}{3(p-1)}\delta'\right)\|\nabla u(t)\|_{L^2}^2,
\end{align*}
we have
\begin{align*}
\|\nabla u(t)\|_{L^2}^2-\frac{3(p-1)}{2(p+1)}\|u(t)\|_{L^{p+1}}^{p+1}
%	&=\frac{3(p-1)}{2}\left(\frac{1}{2}\|\nabla u(t)\|_{L^2}^2-\frac{1}{p+1}\|u(t)\|_{L^{p+1}}^{p+1}\right)+\frac{7-3p}{4}\|\nabla u(t)\|_{L^2}^2\\
	&=\frac{3(p-1)}{2}E_0[u(t)]+\frac{7-3p}{4}\|\nabla u(t)\|_{L^2}^2\\
	&>\frac{3(p-1)}{2}\left(\frac{3p-7}{6(p-1)}+\frac{4}{3(p-1)}\delta'\right)\|\nabla u(t)\|_{L^2}^2+\frac{7-3p}{4}\|\nabla u(t)\|_{L^2}^2\\
	&=2\delta'\|\nabla u(t)\|_{L^2}^2>\frac{3(p-1)\delta'}{(p+1)(1-2\delta')}\|u(t)\|_{L^{p+1}}^{p+1},
\end{align*}
which completes the proof of Proposition \ref{Coercivity1} (ii).\\
Finally, we prove (iii). $\|\chi_Ru(t)\|_{L^2}\leq\|u(t)\|_{L^2}$ holds clearly. Since
\begin{align}
\|\nabla(\chi_Ru(t))\|_{L^2}^2
	&=\|\chi_R\nabla u(t)\|_{L^2}^2-\frac{1}{R^2}\int_{\mathbb{R}^3}\chi_R(x)(\Delta\chi)\left(\frac{x}{R}\right)|u(t,x)|^2dx \label{061} \\
	&\leq\|\nabla u(t)\|_{L^2}^2+\mathcal{O}\left(\frac{1}{R^2}M[u_0]\right), \notag
\end{align}
we have
\begin{align*}
\|\chi_Ru(t)\|_{L^2}^{1-s_c}\|\nabla(\chi_Ru(t))\|_{L^2}^{s_c}
	&\leq\|u(t)\|_{L^2}^{1-s_c}\left(\|\nabla u(t)\|_{L^2}^2+\mathcal{O}\left(\frac{1}{R^2}M[u_0]\right)\right)^\frac{s_c}{2}\\
	&\leq\|u(t)\|_{L^2}^{1-s_c}\|\nabla u(t)\|_{L^2}^{s_c}+\mathcal{O}\left(\frac{1}{R^{s_c}}M[u_0]^\frac{1}{2}\right)\\
	&<(1-2\delta')^\frac{1}{p-1}\|Q\|_{L^2}^{1-s_c}\|\nabla Q\|_{L^2}^{s_c}+\mathcal{O}\left(\frac{1}{R^{s_c}}M[u_0]^\frac{1}{2}\right)\\
	&<(1-\delta')^\frac{1}{p-1}\|Q\|_{L^2}^{1-s_c}\|\nabla Q\|_{L^2}^{s_c}
\end{align*}
for sufficiently large $R=R(\delta, \|u_0\|_{L^2})$.
\end{proof}

We define the exponents
\begin{align*}
q_0=\frac{5(p-1)}{2},\ \ r_0=\frac{30(p-1)}{15p-23},\ \ \rho=\frac{5(p-1)}{2p},\ \ \gamma=\frac{30(p-1)}{27p-35}.
\end{align*}
We note that $(q_0,q_0)$ is $\dot{H}^{s_c}$\,admissible, $(q_0,r_0)$ is $L^2$\,admissible, and $(\rho,\gamma)$ is dual $L^2$\,admissible, that is,
\begin{align}
\frac{2}{q_0}+\frac{3}{q_0}=\frac{3}{2}-s_c,\ \ \frac{2}{q_0}+\frac{3}{r_0}=\frac{3}{2},\ \ \frac{2}{\rho'}+\frac{3}{\gamma'}=\frac{3}{2}. \label{057}
\end{align}

\begin{lemma}[Small data global existence, \cite{14}]\label{Small data global existence}
Let $\frac{7}{3}<p<5$, $T>0$, $V\in \mathcal{K}_0\cap L^\frac{3}{2}$, and $\|u(T)\|_{H^{s_c}}\leq A$. There exists $\varepsilon_0>0$ such that, for any $0<\varepsilon<\varepsilon_0$ if
\begin{align*}
\|e^{-i(t-T)\mathcal{H}}u(T)\|_{L^{q_0}(T,\infty;L^{q_0})}<\varepsilon,
\end{align*}
then (NLS$_V$) with initial data $u(T)$ has a unique solution $u$ on $[T,\infty)$ and
\begin{align*}
\|u\|_{L^{q_0}(T,\infty;L^{q_0})}\lesssim\varepsilon.
\end{align*}
\end{lemma}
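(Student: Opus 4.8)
The plan is to run a standard Cazenave--Weissler/Banach fixed point argument for the Duhamel map on $[T,\infty)$, using the Strichartz estimates of Theorem \ref{Strichartz estimate} together with the norm equivalence of Lemma \ref{Norm equivalence} to handle the fact that $|\nabla|^{s_c}$ does not commute with $e^{-it\mathcal{H}}$. Setting
\begin{align*}
\Phi(u)(t):=e^{-i(t-T)\mathcal{H}}u(T)+i\int_T^te^{-i(t-s)\mathcal{H}}(|u|^{p-1}u)(s)ds,
\end{align*}
I would look for a fixed point in the complete metric space
\begin{align*}
B:=\left\{u\,;\ \|u\|_{L^{q_0}(T,\infty;L^{q_0})}\leq2\varepsilon,\ \||\nabla|^{s_c}u\|_{S(L^2)}\leq2cA\right\}
\end{align*}
equipped with $d(u,v):=\|u-v\|_{S(L^2)}$, where $c$ is the implicit constant from the Strichartz estimates.

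First I would control the linear part. The scattering-norm piece $\|e^{-i(t-T)\mathcal{H}}u(T)\|_{L^{q_0}L^{q_0}}$ is $<\varepsilon$ by hypothesis, consistently with \eqref{012} since $(q_0,q_0)$ is $\dot{H}^{s_c}$\,admissible and lies in $\Lambda_{s_c}$. For the regularity piece I would write $\||\nabla|^{s_c}e^{-it\mathcal{H}}u(T)\|_{S(L^2)}\sim\|e^{-it\mathcal{H}}\mathcal{H}^{s_c/2}u(T)\|_{S(L^2)}\lesssim\|\mathcal{H}^{s_c/2}u(T)\|_{L^2}\sim\|u(T)\|_{\dot{H}^{s_c}}\leq A$, where the outer equivalences come from Lemma \ref{Norm equivalence} (so that $\mathcal{H}^{s_c/2}$, which commutes with the propagator, replaces $|\nabla|^{s_c}$) and the middle inequality is \eqref{010}.

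The heart of the matter is the nonlinear estimate
\begin{align*}
\||\nabla|^{s_c}(|u|^{p-1}u)\|_{S'(L^2)}\leq\||\nabla|^{s_c}(|u|^{p-1}u)\|_{L^\rho L^\gamma}\lesssim\|u\|_{L^{q_0}L^{q_0}}^{p-1}\||\nabla|^{s_c}u\|_{L^{q_0}L^{r_0}},
\end{align*}
in which the first inequality holds because $(\rho',\gamma')$ is $L^2$\,admissible (so this particular dual pair is allowed in the infimum defining $\|\cdot\|_{S'(L^2)}$), and the second is the fractional chain rule of Lemma \ref{Fractional calculus} applied to $G(z)=|z|^{p-1}z$ (which is $C^1$ since $p>\frac73>2$, and $s_c\in(0,1)$), distributing $p-1$ factors into the scattering norm $L^{q_0}$ and the derivative into the $L^2$\,admissible exponent $r_0$. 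The exponents $q_0,r_0,\rho,\gamma$ are chosen precisely so that the H\"older relations $\frac1\gamma=\frac{p-1}{q_0}+\frac1{r_0}$ in space and $\frac1\rho=\frac{p-1}{q_0}+\frac1{q_0}$ in time close up exactly, using \eqref{057}; this scaling bookkeeping is where I expect the main care (rather than real difficulty) to lie. Feeding this into \eqref{013} (for the $L^{q_0}L^{q_0}$ bound) and into \eqref{011} combined with Lemma \ref{Norm equivalence} (for the $\||\nabla|^{s_c}\cdot\|_{S(L^2)}$ bound, again commuting $\mathcal{H}^{s_c/2}$ through the Duhamel integral) gives, for $u\in B$,
\begin{align*}
\|\Phi(u)\|_{L^{q_0}L^{q_0}}\leq\varepsilon+c\varepsilon^{p-1}(2cA),\qquad\||\nabla|^{s_c}\Phi(u)\|_{S(L^2)}\leq cA+c\varepsilon^{p-1}(2cA).
\end{align*}

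Finally I would close the argument. Choosing $\varepsilon_0=\varepsilon_0(A)$ so small that $c(2cA)\varepsilon_0^{p-2}\leq1$ and $2c^2\varepsilon_0^{p-1}\leq1$ (possible since $p-2>\frac13>0$) forces $\Phi(B)\subset B$. For the contraction I would bound $|u|^{p-1}u-|v|^{p-1}v$ pointwise by $(|u|^{p-1}+|v|^{p-1})|u-v|$ and run the same H\"older scheme without derivatives, obtaining $d(\Phi(u),\Phi(v))\lesssim(\|u\|_{L^{q_0}L^{q_0}}^{p-1}+\|v\|_{L^{q_0}L^{q_0}}^{p-1})d(u,v)\lesssim\varepsilon^{p-1}d(u,v)$, a strict contraction after possibly shrinking $\varepsilon_0$. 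Banach's fixed point theorem then yields a unique $u\in B$ solving (NLS$_V$) on $[T,\infty)$ with $\|u\|_{L^{q_0}(T,\infty;L^{q_0})}\leq2\varepsilon\lesssim\varepsilon$, and global existence on the half-line is automatic since the construction is carried out on all of $[T,\infty)$. The only genuine obstacle is organizing the fractional-derivative nonlinear estimate and the repeated use of Lemma \ref{Norm equivalence} to move $|\nabla|^{s_c}$ past the potential propagator.
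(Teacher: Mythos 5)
Your overall scheme is the one the paper itself uses: the same Duhamel map $\Phi$, the same exponent system $q_0,r_0,\rho,\gamma$ with the H\"older relations $\frac{1}{\gamma}=\frac{p-1}{q_0}+\frac{1}{r_0}$, $\frac{1}{\rho}=\frac{p}{q_0}$, the fractional chain rule (Lemma \ref{Fractional calculus}) to split $|\nabla|^{s_c}(|u|^{p-1}u)$, the Strichartz estimates \eqref{010}--\eqref{013}, smallness from $p-2>\frac13$, and a contraction carried out in a metric \emph{without} derivatives (the paper uses $d(u_1,u_2)=\|u_1-u_2\|_{L^{q_0}(T,\infty;L^{r_0})}$; your $S(L^2)$ metric works just as well since $\|u-v\|_{L^{q_0}L^{r_0}}\leq\|u-v\|_{S(L^2)}$). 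The only structural difference is your choice of the regularity norm in the ball $B$, and this is where there is a genuine gap.

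You take $\||\nabla|^{s_c}u\|_{S(L^2)}$, i.e.\ a supremum over \emph{all} $L^2$\,admissible pairs with $2\leq r\leq6$, and justify both the linear bound and the Duhamel bound by the claimed equivalence $\||\nabla|^{s_c}v\|_{S(L^2)}\sim\|\mathcal{H}^{s_c/2}v\|_{S(L^2)}$. But Lemma \ref{Norm equivalence} only gives $\||\nabla|^{s_c}f\|_{L^r}\sim\|\mathcal{H}^{s_c/2}f\|_{L^r}$ in the restricted range $1<r<\frac{3}{s_c}$, and since $s_c=\frac{3p-7}{2(p-1)}$ one has $\frac{3}{s_c}=\frac{6(p-1)}{3p-7}\leq6$ exactly when $p\geq3$. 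So for $3\leq p<5$ the supremum defining $S(L^2)$ contains admissible exponents (e.g.\ $r=6$) at which the cited lemma gives no equivalence at all — and for Schr\"odinger operators $-\Delta+V$ such Sobolev-norm equivalences genuinely break down for large $r$, so this is not merely a citation issue. As written, your bounds $\||\nabla|^{s_c}e^{-i(t-T)\mathcal{H}}u(T)\|_{S(L^2)}\lesssim A$ and $\||\nabla|^{s_c}\Phi(u)\|_{S(L^2)}\leq cA+c\varepsilon^{p-1}(2cA)$ are unjustified on part of the stated range of $p$. The repair is exactly the paper's choice: keep only the pairs you actually use downstream, namely $(\infty,2)$ and $(q_0,r_0)$ (the paper's space $E$ is built from $L^\infty H^{s_c}$, $L^{q_0}W^{s_c,r_0}$, $L^{q_0}L^{q_0}$), together with the dual pair $(\rho,\gamma)$; all of these satisfy the constraint $r<\frac{3}{s_c}$ for every $\frac73<p<5$ (for instance $r_0<\frac{3}{s_c}$ reduces to $90p-210<90p-138$, and $\gamma<\frac{3}{s_c}$ to $0<72p$). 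With the ball $B$ redefined through those norms, the rest of your argument — self-mapping, contraction, and the conclusion $\|u\|_{L^{q_0}(T,\infty;L^{q_0})}\leq2\varepsilon$ — goes through verbatim and coincides with the paper's proof.
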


\begin{proof}
We define a function space
\begin{align*}
E=\left\{u\in CH^{s_c}\cap L^{q_0}W^{s_c,r_0}\!:\!\|u\|_{L^\infty(T,\infty;H^{s_c})}\leq 2CA,\,\|u\|_{L^{q_0}(T,\infty;W^{s_c,r_0})}\leq 2CA,\,\|u\|_{L^{q_0}(T,\infty;L^{q_0})}\leq2\varepsilon\right\}
\end{align*}
and a distance $d$ on $E$
\begin{align*}
d(u_1,u_2)=\|u_1-u_2\|_{L^{q_0}(T,\infty;L^{r_0})}.
\end{align*}
Also, we define a map
\begin{align*}
\Phi(u)(t)=e^{-i(t-T)\mathcal{H}}u(T)+i\int_T^te^{-i(t-s)\mathcal{H}}(|u|^{p-1}u)(s)ds
\end{align*}
for $u\in E$. Using Theorem \ref{Strichartz estimate}, we have
\begin{align*}
\|\Phi(u)\|_{L^{q_0}(T,\infty;L^{q_0})}
	&\leq\|e^{-i(t-T)\mathcal{H}}u(T)\|_{L^{q_0}(T,\infty;L^{q_0})}+\left\|\int_T^te^{-i(t-s)\mathcal{H}}(|u|^{p-1}u)(s)ds\right\|_{L^{q_0}(T,\infty;L^{q_0})}\\
%	&\lesssim\|e^{-i(t-T)\mathcal{H}}u(T)\|_{L^{q_0}(T,\infty;L^{q_0})}+\left\|\int_T^te^{-i(t-s)\mathcal{H}}|\nabla|^{s_c}(|u|^{p-1}u)(s)ds\right\|_{L^{q_0}(T,\infty;L^{r_0})}\\
	&\leq\varepsilon+C\||u|^{p-1}u\|_{L^\rho(T,\infty;W^{s_c,\gamma})}\\
	&\leq\varepsilon+C\|u\|_{L^{q_0}(T,\infty;L^{q_0})}^{p-1}\|u\|_{L^{q_0}(T,\infty;W^{s_c,r_0})}\\
%	&\leq\varepsilon+C2^pA\varepsilon^{p-1}\\
	&\leq(1+2^pCA\varepsilon^{p-2})\varepsilon
\end{align*}
and
\begin{align*}
\|\Phi(u)\|_{L^\infty(T,\infty;H^{s_c})}
	&\leq\left\|e^{-i(t-T)\mathcal{H}}u(T)\right\|_{L^\infty(T,\infty;H^{s_c})}+\left\|\int_T^te^{-i(t-s)\mathcal{H}}(|u|^{p-1}u)(s)ds\right\|_{L^\infty(T,\infty;H^{s_c})}\\
	&\leq C\|u(T)\|_{H^{s_c}}+C\||u|^{p-1}u\|_{L^\rho(T,\infty;W^{s_c,\gamma})}\\
	&\leq C\|u(T)\|_{H^{s_c}}+C\|u\|_{L^{q_0}(T,\infty;L^{q_0})}^{p-1}\|u\|_{L^{q_0}(T,\infty;W^{s_c,r_0})}\\
%	&\leq CA+2^pCA\varepsilon^{p-1}\\
	&\leq(1+2^pC\varepsilon^{p-1})CA.
\end{align*}
Similarly, we have
\begin{align*}
\|\Phi(u)\|_{L^{q_0}(T,\infty;W^{s_c,r_0})}\leq(1+2^pC\varepsilon^{p-1})CA.
\end{align*}
Thus, if $\varepsilon>0$ satisfies $\max\{2^pCA\varepsilon^{p-2},2^pC\varepsilon^{p-1}\}\leq\frac{1}{2}$, then
\begin{align*}
\|u\|_{L^\infty(T,\infty;H^{s_c})}\leq 2CA,\ \ \ \|u\|_{L^{q_0}(T,\infty;W^{s_c,r_0})}\leq 2CA,\ \ \ \|u\|_{L^{q_0}(T,\infty;L^{q_0})}\leq2\varepsilon.
\end{align*}
Also, for $u, v\in E$,
\begin{align*}
\|\Phi(u)-\Phi(v)\|_{L^{q_0}(T,\infty;L^{r_0})}
	&\leq\left\|\int_{T}^te^{-i(t-s)\mathcal{H}}(|u|^{p-1}u-|v|^{p-1}v)(s)ds\right\|_{L^{q_0}(T,\infty;L^{r_0})}\\
	&\hspace{0.0cm}\leq C\||u|^{p-1}u-|v|^{p-1}v\|_{L^\rho(T,\infty;L^\gamma)}\\
%	&\hspace{0.5cm}\leq C\|(|u|^{p-1}+|v|^{p-1})(u-v)\|_{L^\rho(T,\infty;L^\gamma)}\\
%	&\hspace{0.5cm}\leq C\left(\||u|^{p-1}\|_{L^{\frac{q_0}{p-1}}(T,\infty;L^\frac{q_0}{p-1})}+\||v|^{p-1}\|_{L^{\frac{q_0}{p-1}}(T,\infty;L^\frac{q_0}{p-1})}\right)\|u-v\|_{L^{q_0}(T,\infty;L^{r_0})}\\
	&\hspace{0.0cm}\leq C\left(\|u\|_{L^{q_0}(T,\infty;L^{q_0})}^{p-1}+\|v\|_{L^{q_0}(T,\infty;L^{q_0})}^{p-1}\right)\|u-v\|_{L^{q_0}(T,\infty;L^{r_0})}\\
	&\hspace{0.0cm}\leq 2^pC\varepsilon^{p-1}\|u-v\|_{L^{q_0}(T,\infty;L^{r_0})}\\
	&\hspace{0.0cm}\leq \frac{1}{2}\|u-v\|_{L^{q_0}(T,\infty;L^{r_0})}
\end{align*}
Therefore, $\Phi$ is a contraction map on $E$, and hence, there exists a unique solution $u$ to (NLS$_V$) on $[T,\infty)$.
\end{proof}

\begin{lemma}[Small data scattering]\label{Small data scattering}
Let $\frac{7}{3}<p<5$, $T>0$, and $V\in\mathcal{K}_0\cap L^\frac{3}{2}$. $u\in L^\infty H^1$ is a time global solution to (NLS$_V$) satisfying
\begin{align*}
\|u\|_{L^\infty H^1}\leq E.
\end{align*}
Then, there exists $\varepsilon>0$ such that if
\begin{align*}
\|e^{-i(t-T)\mathcal{H}}u(T)\|_{L^{q_0}(T,\infty;L^{q_0})}<\varepsilon,
\end{align*}
then $u$ scatters in positive time.
\end{lemma}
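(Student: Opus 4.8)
The plan is to promote the critical smallness hypothesis to a global spacetime bound at the $H^1$ level and then read off the asymptotic state from Duhamel's formula.

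First I would record that Lemma~\ref{Small data global existence} already produces a solution on $[T,\infty)$ with $\|u\|_{L^{q_0}(T,\infty;L^{q_0})}\lesssim\varepsilon$. The first real task is to upgrade this critical control to
\begin{align*}
\|(1+\mathcal{H})^{\frac12}u\|_{S(L^2;(T,\infty))}\lesssim\|u(T)\|_{H^1}<\infty.
\end{align*}
Applying the inhomogeneous estimate \eqref{011} and homogeneous estimate \eqref{010} of Theorem~\ref{Strichartz estimate} to the Duhamel representation $u(t)=e^{-i(t-T)\mathcal{H}}u(T)+i\int_T^te^{-i(t-s)\mathcal{H}}(|u|^{p-1}u)(s)\,ds$ reduces everything to an estimate for $\|(1+\mathcal{H})^{\frac12}(|u|^{p-1}u)\|_{S'(L^2)}$. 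I would bound this by its $L^{\rho}L^{\gamma}$ norm, convert $(1+\mathcal{H})^{\frac12}$ into the flat derivative by Lemma~\ref{Norm equivalence}, distribute with the fractional Leibniz rule Lemma~\ref{Fractional calculus}, and apply Hölder in space and time with the exponents $q_0,r_0,\rho,\gamma$, obtaining
\begin{align*}
\|(1+\mathcal{H})^{\frac12}(|u|^{p-1}u)\|_{S'(L^2;(T,\infty))}\lesssim\|u\|_{L^{q_0}(T,\infty;L^{q_0})}^{p-1}\,\|(1+\mathcal{H})^{\frac12}u\|_{S(L^2;(T,\infty))}.
\end{align*}
Since the small factor is $\lesssim\varepsilon^{p-1}$, a continuity argument in the right endpoint (finite near $T$ by local well-posedness, Theorem~\ref{Local well-posedness 1}) absorbs the nonlinear term for $\varepsilon$ small and yields the global bound on all of $(T,\infty)$.

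Next I would construct the scattering state. Writing $e^{it\mathcal{H}}u(t)=e^{iT\mathcal{H}}u(T)+i\int_T^te^{is\mathcal{H}}(|u|^{p-1}u)(s)\,ds$ and setting
\begin{align*}
\psi_+:=e^{iT\mathcal{H}}u(T)+i\int_T^\infty e^{is\mathcal{H}}(|u|^{p-1}u)(s)\,ds,
\end{align*}
the dual homogeneous Strichartz estimate together with the nonlinear bound just proved shows $\psi_+\in H^1$. Because $e^{-it\mathcal{H}}$ commutes with $(1+\mathcal{H})^{\frac12}$ and is unitary on $L^2$, it is an isometry of $H_V^1$, so
\begin{align*}
\|u(t)-e^{-it\mathcal{H}}\psi_+\|_{H_V^1}=\Bigl\|\int_t^\infty e^{is\mathcal{H}}(|u|^{p-1}u)(s)\,ds\Bigr\|_{H_V^1}\lesssim\|(1+\mathcal{H})^{\frac12}(|u|^{p-1}u)\|_{S'(L^2;(t,\infty))}.
\end{align*}
By the previous step the right-hand side is the tail of a finite quantity and therefore tends to $0$ as $t\to+\infty$; Lemma~\ref{Norm equivalence} turns this $H_V^1$ convergence into convergence in $H^1$, which is exactly scattering in positive time.

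The main obstacle is the nonlinear estimate in the first step, namely transferring $(1+\mathcal{H})^{\frac12}$ across $|u|^{p-1}u$. The Leibniz rule is available only for the flat derivative, so one must route through Lemma~\ref{Norm equivalence}, whose order-one version requires every intermediate spatial exponent—in particular $\gamma$ and $r_0$—to lie in $(1,3)$. Here $\gamma<3$ always holds, but $r_0=\frac{30(p-1)}{15p-23}$ exceeds $3$ for $\frac{7}{3}<p<\frac{13}{5}$, so the derivative factor cannot be carried back to the $\mathcal{H}$-adapted space verbatim. Resolving this is the crux: one carries the top-order estimate at the critical regularity $s_c$ (where $r_0<3/s_c$, so Lemma~\ref{Norm equivalence} applies and the $\dot H^{s_c}$- and $L^2$-scattering states exist), obtains weak convergence of $e^{it\mathcal{H}}u(t)$ to $\psi_+$ in $H_V^1$ from the uniform bound $\|u\|_{L^\infty H^1}\le E$, and upgrades to strong $\dot H^1_V$ convergence by showing the $\dot H^1_V$ norms converge, which follows from $\|u(t)\|_{L^{p+1}}\to0$ (via the dispersive decay of Theorem~\ref{Dispersive estimate}) together with conservation of $E_V$.
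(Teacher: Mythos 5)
Your opening two steps follow the paper's strategy (feed the conclusion of Lemma \ref{Small data global existence} into a bootstrap for energy-level spacetime bounds, then run a Duhamel tail/Cauchy argument), and you correctly isolate the crux: Lemma \ref{Norm equivalence} at order $s=1$ requires the spatial exponent to lie in $(1,3)$, while $r_0=\frac{30(p-1)}{15p-23}\geq 3$ precisely when $\frac{7}{3}<p\leq\frac{13}{5}$, so the naive $S(L^2)$ bootstrap carrying $(1+\mathcal{H})^{\frac12}$ through the fractional Leibniz rule does not close in that range. The genuine gap is in your proposed repair. From strong $H^{s_c}$ scattering, weak $H_V^1$ convergence $e^{it\mathcal{H}}u(t)\rightharpoonup\psi_+$, and convergence of the norms $\|\mathcal{H}^{\frac12}u(t)\|_{L^2}^2=2E_V[u(t)]+\frac{2}{p+1}\|u(t)\|_{L^{p+1}}^{p+1}\rightarrow 2E_V[u_0]$, you cannot conclude strong $\dot H_V^1$ convergence: weak convergence plus convergence of norms yields strong convergence only when the norms converge to the norm of the weak limit, and nothing you have established shows $\|\mathcal{H}^{\frac12}\psi_+\|_{L^2}^2=2E_V[u_0]$. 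Weak lower semicontinuity gives only ``$\leq$'', and strict inequality is exactly the scenario of energy escaping to high frequencies: a residual $g(t)=e^{it\mathcal{H}}u(t)-\psi_+$ whose Fourier support runs off to $|\xi|\sim N(t)\rightarrow\infty$ with $\|g(t)\|_{L^2}\sim N(t)^{-1}$ vanishes strongly in $H^{s_c}$ and weakly in $H^1$, yet carries a fixed positive amount of $\dot H^1$ norm. Ruling this out is equivalent to the conclusion of the lemma, so your argument is circular at its decisive point. (A secondary issue: $\|u(t)\|_{L^{p+1}}\rightarrow0$ does not follow from Theorem \ref{Dispersive estimate} applied to the nonlinear solution, and $H^{s_c}\not\hookrightarrow L^{p+1}$ since $p+1>\frac{3(p-1)}{2}$ for $p<5$; this part is repairable by interpolating the $H^{s_c}$ scattering against the uniform $H^1$ bound, but the norm-convergence step is not.)

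For contrast, the paper resolves the obstruction purely by exponent bookkeeping, never invoking order-one equivalence at $r_0$: it introduces a critical-level pair $(q_1,r_1)$ with $r_1<\frac{3}{s_c}$, so that $\|u\|_{L^{q_1}(T,\infty;W^{s_c,r_1})}$ is finite by Strichartz together with the smallness of $\|u\|_{L^{q_0}(T,\infty;L^{q_0})}$, and an energy-level pair $(q_2,r_2)$ with $r_2<3$ (taken as $2^+$ for $\frac{7}{3}<p\leq3$ and as $\frac{3(p-1)}{p}$ for $3<p<5$). The nonlinearity is then estimated in the dual norm $L^2(T,\infty;W^{1,\frac{6}{5}})$, where $\frac{6}{5}<3$, via H\"older as $\|u\|_{L^{q_1}(T,\infty;L^{r})}^{p-1}\|u\|_{L^{q_2}(T,\infty;W^{1,r_2})}$ using the embedding $\dot W^{s_c,r_1}\hookrightarrow L^{r}$. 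This produces honest $H^1$-level spacetime bounds on $(T,\infty)$, after which the Duhamel tail is small in $H^1$ and $\{e^{it\mathcal{H}}u(t)\}$ is Cauchy. If you wish to keep your outline, replace your third paragraph by this choice of auxiliary exponents.
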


\begin{proof}
We take $\varepsilon>0$ as in Lemma \ref{Small data global existence} with $A=E$. From Lemma \ref{Small data global existence}, the unique solution $u$ to (NLS$_V$) satisfies
\begin{align*}
\|u\|_{L^{q_0}(T,\infty;W^{s_c,r_0})}\leq 2CE\ \ \text{ and }\ \ \|u\|_{L^{q_0}(T,\infty;L^{q_0})}\leq 2\varepsilon.
\end{align*}
Here, we take exponents $q_1$, $r_1$, $q_2$, $r_2$, and $r$ as follows.\\
Case 1: $\frac{7}{3}<p\leq 3$.\\
We choose
\begin{align*}
q_1:=2(p-1)^+,\ \ r_1:=\frac{6(p-1)}{3p-5}^-,\ \ q_2:=\infty^-,\ \ r_2:=2^+,\ \ r:=3(p-1)^-
\end{align*}
satisfying $(q_1,r_1)$ and $(q_2,r_2)$ are $L^2$\,admissible pairs, the embedding $\dot{W}^{s_c,r_1}\hookrightarrow L^r$ holds, $\dot{W}_V^{s_c,r_1}$ and $\dot{W}^{s_c,r_1}$ are equivalent, and $\dot{W}_V^{1,r_2}$ and $\dot{W}^{1,r_2}$ are equivalent.\\
Case 2: $3<p<5$.
\begin{align*}
q_1:=\frac{4(p-1)^2}{p+1},\ \ r_1:=\frac{6(p-1)^2}{3p^2-7p+2},\ \ q_2:=\frac{4(p-1)}{p-3},\ \ r_2:=\frac{3(p-1)}{p},\ \ r:=\frac{6(p-1)^2}{3p-5}.
\end{align*}
Then, $(q_1,r_1)$ and $(q_2,r_2)$ are $L^2$\,admissible pairs, the embedding $\dot{W}^{s_c,r_1}\hookrightarrow L^r$ holds, $\dot{W}_V^{s_c,r_1}$ and $\dot{W}^{s_c,r_1}$ are equivalent, and $\dot{W}_V^{1,r_2}$ and $\dot{W}^{1,r_2}$ are equivalent. Then,
\begin{align*}
\|u\|_{L^{q_1}(T,\infty;W^{s_c,r_1})}\leq C\|u(T)\|_{H^{s_c}}+C\|u\|_{L^{q_0}(T,\infty;L^{q_0})}^{p-1}\|u\|_{L^{q_0}(T,\infty;W^{s_c,r_0})}<\infty.
\end{align*}
Thus, we have
\begin{align*}
u\in L^\infty(T,\infty;H^1)\cap L^{q_0}(T,\infty;W^{s_c,r_0})\cap L^{q_1}(T,\infty;W^{s_c,r_1}).
\end{align*}
The following estimate
\begin{align*}
\|u\|_{L^{q_2}(T,\infty;W^{1,r_2})}
	&\leq c\|u(T)\|_{H^1}+c\||u|^{p-1}u\|_{L^2(T,\infty;W^{1,\frac{6}{5}})}\\
	&\leq c\|u(T)\|_{H^1}+c\|u\|_{L^{q_1}(T,\infty;L^r)}^{p-1}\|u\|_{L^{q_2}(T,\infty;W^{1,r_2})}\\
	&\leq c\|u(T)\|_{H^1}+c\|u\|_{L^{q_1}(T,\infty;\dot{W}^{s_c,r_1})}^{p-1}\|u\|_{L^{q_2}(T,\infty;W^{1,r_2})}
\end{align*}
deduces $u\in L^{q_2}(T,\infty;W^{1,r_2})$ and hence, we obtain
\begin{align*}
\|e^{it\mathcal{H}}u(t)-e^{i\tau\mathcal{H}}u(\tau)\|_{H^1}
	&=\left\|\int_\tau^te^{is\mathcal{H}}(|u|^{p-1}u)(s)ds\right\|_{H^1}\leq c\||u|^{p-1}u\|_{L^2(\tau,t;W^{1,\frac{6}{5}})}\\
	&\leq c\|u\|_{L^{q_1}(\tau,t;W^{s_c,r_1})}^{p-1}\|u\|_{L^{q_2}(\tau,t;W^{1,r_2})}\longrightarrow0\ \ \text{ as }\ \ t>\tau\rightarrow\infty.
\end{align*}
Therefore, $\{e^{it\mathcal{H}}u(t)\}$ is a Cauchy sequence in $H^1$.
\end{proof}

\begin{theorem}[Scattering criterion, \cite{18}]\label{Scattering criterion}
Let $E>0$, $\frac{7}{3}<p<5$, $V\geq0$, $V\in\mathcal{K}_0\cap L^\frac{3}{2}$, and $V$ is radially symmetric. Suppose that $u:\mathbb{R}\times\mathbb{R}^3\longrightarrow\mathbb{C}$ is radially symmetric and a solution to (NLS$_V$) satisfying
\begin{align*}
\|u\|_{L^\infty H^1}\leq E.
\end{align*}
Then, there exist $\varepsilon=\varepsilon(E)>0$ and $R=R(E)>0$ such that if
\begin{align*}
\liminf_{t\rightarrow\infty}\int_{|x|\leq R}|u(t,x)|^2dx\leq\varepsilon^2,
\end{align*}
then $u$ scatters in positive time.
\end{theorem}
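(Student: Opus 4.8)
The plan is to reduce the scattering statement to a single smallness estimate for the critical free evolution, and then establish that estimate by combining dispersion with the concentration hypothesis. By Lemma~\ref{Small data scattering} (applied with the bound $\|u\|_{L^\infty H^1}\le E$), it suffices to exhibit one time $T$, which may be taken arbitrarily large, such that
\[
\|e^{-i(t-T)\mathcal{H}}u(T)\|_{L^{q_0}(T,\infty;L^{q_0})}<\varepsilon_*,
\]
where $\varepsilon_*=\varepsilon_*(E)$ is the small-data threshold furnished by Lemma~\ref{Small data global existence}. Writing the Duhamel formula for $u(T)$ based at time $0$ and applying $e^{-i(t-T)\mathcal{H}}$, I would decompose, for a time lag $\tau>0$ to be fixed later,
\[
e^{-i(t-T)\mathcal{H}}u(T)=e^{-it\mathcal{H}}u_0+i\int_0^{T-\tau}e^{-i(t-s)\mathcal{H}}(|u|^{p-1}u)(s)\,ds+i\int_{T-\tau}^{T}e^{-i(t-s)\mathcal{H}}(|u|^{p-1}u)(s)\,ds,
\]
and bound the three pieces (linear, far past, recent past) in $L^{q_0}(T,\infty;L^{q_0})$.

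The linear piece is the easiest: since $(q_0,q_0)$ is $\dot H^{s_c}$-admissible, estimate \eqref{012} gives $\|e^{-it\mathcal{H}}u_0\|_{L^{q_0}(0,\infty;L^{q_0})}\lesssim\|u_0\|_{\dot H^{s_c}}\lesssim E<\infty$, so its tail over $(T,\infty)$ vanishes as $T\to\infty$. For the far past I would use dispersion: for $t\ge T$ and $s\le T-\tau$ one has $t-s\ge\tau$, and the dispersive estimate (Theorem~\ref{Dispersive estimate}) together with $\||u|^{p-1}u\|_{L^1}=\|u\|_{L^p}^p\lesssim E^p$ (legitimate because $2<p<6$) yields
\[
\Big\|\int_0^{T-\tau}e^{-i(t-s)\mathcal{H}}(|u|^{p-1}u)(s)\,ds\Big\|_{L^\infty(T,\infty;L^\infty)}\lesssim E^p\int_\tau^\infty r^{-3/2}\,dr\lesssim E^p\tau^{-1/2}.
\]
Interpolating this small $L^\infty_{t,x}$ bound against a fixed space-time Strichartz norm that is finite by Theorem~\ref{Strichartz estimate} (for instance the diagonal $L^2$-admissible norm $L^{10/3}_{t,x}$, available since $q_0>10/3$ for $p>\tfrac73$) upgrades it to a bound of the form $\lesssim_E\tau^{-\theta}$ in $L^{q_0}_{t,x}$ for some $\theta>0$; thus the far-past piece is below $\varepsilon_*/3$ once $\tau=\tau(E)$ is large.

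The recent-past piece is the crux. Applying \eqref{013} and the fractional calculus of Lemma~\ref{Fractional calculus} to distribute $|\nabla|^{s_c}$ across $|u|^{p-1}u$, I would bound it by a dual Strichartz norm of $|u|^{p-1}\,|\nabla|^{s_c}u$ over the short window $[T-\tau,T]$ and then split the spatial integration at $|x|=R$. On the exterior $|x|>R$ the radial Sobolev inequality (Lemma~\ref{Radial Sobolev inequality}) contributes a gain $R^{-(p-1)}$, small for large $R$. On the interior $|x|\le R$, Hölder together with the Sobolev embedding—using that the relevant integrability exponents lie strictly between $2$ and $6$—lets me extract a positive power of $\sup_{s\in[T-\tau,T]}\|u(s)\|_{L^2(|x|\le R)}$. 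To control this supremum I would invoke the localized virial identity \eqref{022} with $\omega=\chi_R$: the potential and nonlinear contributions are real and drop out of the imaginary part, leaving $\big|\tfrac{d}{dt}\int\chi_R|u|^2\big|\lesssim\|\nabla\chi_R\|_{L^\infty}\,E^2\lesssim R^{-1}E^2$, whence, choosing $T$ from the $\liminf$ hypothesis so that $\int_{|x|\le R}|u(T)|^2\le\varepsilon^2$,
\[
\sup_{s\in[T-\tau,T]}\int_{\mathbb{R}^3}\chi_R(x)|u(s,x)|^2\,dx\le\int_{|x|\le R}|u(T,x)|^2\,dx+CR^{-1}E^2\tau\le\varepsilon^2+CR^{-1}E^2\tau.
\]

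It remains a parameter chase, carried out in the order $\varepsilon_*(E)\rightsquigarrow\tau(E)\rightsquigarrow R(E)\rightsquigarrow\varepsilon(E)\rightsquigarrow T$: fix $\tau$ large for the far-past bound; then take $R$ large so that both the radial-Sobolev factor $R^{-(p-1)}$ and the mass-flux error $R^{-1}E^2\tau$—each multiplied by the fixed short-time Hölder factor $\tau^{\beta}$ produced in the recent-past estimate—are small; then choose $\varepsilon=\varepsilon(E)$ small enough that the residual $\varepsilon$-contribution is below $\varepsilon_*/3$; finally pick an arbitrarily large $T$ from the hypothesis at which the mass in $B_R$ is at most $\varepsilon^2$ and at which the linear tail is also below $\varepsilon_*/3$. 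The main obstacle I anticipate is precisely this interlocking in the recent-past term: the positive power $\tau^{\beta}$ arising from the short-time Hölder multiplies the virial mass-flux error, forcing $R$ to be chosen large relative to $\tau$, and one must verify that every implicit constant depends on $E$ alone (so that $\varepsilon$ and $R$ are genuine functions of $E$) while checking that the exponent bookkeeping—the $\dot H^{s_c}$-admissibility of $(q_0,q_0)$, the fractional-chain-rule Hölder split landing in $(2,6)$, and a strictly positive net decay rate $\theta$—holds uniformly for $\tfrac73<p<5$.
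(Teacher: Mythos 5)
Your proposal is correct and follows essentially the same route as the paper's proof: reduction to smallness of $\|e^{-i(t-T)\mathcal{H}}u(T)\|_{L^{q_0}(T,\infty;L^{q_0})}$ via Lemma \ref{Small data global existence} and Lemma \ref{Small data scattering}, the three-piece Duhamel decomposition with a time lag (the paper ties the lag to $\varepsilon$ by taking $\tau=\varepsilon^{-\theta}$, $R>\varepsilon^{-2-\theta}$, but your order of fixing $\tau$, $R$, $\varepsilon$ works equally well), dispersive decay interpolated against a mass-level Strichartz bound for the far past, and backward propagation of the small local mass by the virial identity plus a radial-Sobolev interior/exterior splitting for the recent past. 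The one step to repair is your justification of the $T$-uniform Strichartz bound on the far-past piece: Theorem \ref{Strichartz estimate} does not apply to it directly with a constant independent of $T$ (the inhomogeneous estimate would cost $\||u|^{p-1}u\|_{S'(L^2;[0,T-\tau])}$, which grows with $T$), so, as in the paper, one must first rewrite $F_1(t)=e^{-i(t-T+\tau)\mathcal{H}}u(T-\tau)-e^{-it\mathcal{H}}u_0$ via the Duhamel formula at time $T-\tau$ and only then apply the homogeneous estimate \eqref{010} together with mass conservation.
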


The proof of this theorem is based on the argument in \cite[{\S}4]{18}. We have to change exponents of function spaces.

\begin{proof}
Set $0<\varepsilon<1$ and $R>0$, which will be chosen later. Using Theorem \ref{Strichartz estimate},
\begin{align*}
\|e^{-it\mathcal{H}}u_0\|_{L^{q_0}L^{q_0}}\leq c\|u_0\|_{\dot{H}^{s_c}}<\infty.
\end{align*}
Thus, there exists $T_0>\varepsilon^{-1}$ such that
\begin{align}
\|e^{-it\mathcal{H}}u_0\|_{L^{q_0}(T_0,\infty;L^{q_0})}<\varepsilon. \label{036}
\end{align}
By the assumption of Theorem \ref{Scattering criterion}, there exists $T>T_0$ such that
\begin{align}
\int_{|x|\leq R}|u(T,x)|^2dx\leq2\varepsilon^2.\label{073}
\end{align}
Since $u$ satisfies the integral equation,
\begin{align*}
u(T)=e^{-iT\mathcal{H}}u_0+i\int_0^Te^{-i(T-s)\mathcal{H}}(|u|^{p-1}u)(s)ds,
\end{align*}
we have
\begin{align}
e^{-i(t-T)\mathcal{H}}u(T)
	&=e^{-it\mathcal{H}}u_0+i\int_0^Te^{-i(t-s)\mathcal{H}}(|u|^{p-1}u)(s)ds \notag \\
	&=e^{-it\mathcal{H}}u_0+i\int_{I_1}e^{-i(t-s)\mathcal{H}}(|u|^{p-1}u)(s)ds+i\int_{I_2}e^{-i(t-s)\mathcal{H}}(|u|^{p-1}u)(s)ds \notag \\
	&=:e^{-it\mathcal{H}}u_0+F_1(t)+F_2(t) \label{060}
\end{align}
where $I_1:=[0,T-\varepsilon^{-\theta}]$ and $I_2=[T-\varepsilon^{-\theta},T]$. Here, we will choose $0<\theta=\theta(p)<1$ later. First, we consider estimating $\|F_1\|_{L^{q_0}(T,\infty;L^{q_0})}$. By the integral equation, we have
\begin{align*}
u(T-\varepsilon^{-\theta})=e^{-i(T-\varepsilon^{-\theta})\mathcal{H}}u_0+i\int_{I_1}e^{-i(T-\varepsilon^{-\theta}-s)\mathcal{H}}(|u|^{p-1}u)(s)ds.
\end{align*}
Operating $e^{-i(t-T+\varepsilon^{-\theta})\mathcal{H}}$ to this identity, we have
\begin{align*}
e^{-i(t-T+\varepsilon^{-\theta})\mathcal{H}}u(T-\varepsilon^{-\theta})=e^{-it\mathcal{H}}u_0+i\int_{I_1}e^{-i(t-s)\mathcal{H}}(|u|^{p-1}u)(s)ds.
\end{align*}
Hence,
\begin{align*}
F_1(t)=e^{-i(t-T+\varepsilon^{-\theta})\mathcal{H}}u(T-\varepsilon^{-\theta})-e^{-it\mathcal{H}}u_0.
\end{align*}
We take $\mu$ satisfying
\begin{align*}
\frac{3p-7}{3(p-1)}<\mu<\min\left\{\frac{3p-7}{p-1},\frac{5p-9}{5(p-1)}\right\}.
\end{align*}
We set
\begin{align*}
q_3=\frac{20(p-1)}{15(1-\mu)p+15\mu-27},\ \ r_3=\frac{4(p-1)}{-3(1-\mu)p-3\mu+7}.
\end{align*}
Then, the following relation holds:
\begin{align*}
\frac{1}{q_0}=\frac{1}{q_3}+\frac{1}{r_3},\ \ \frac{2}{q_3(1-\mu)}+\frac{3}{q_0(1-\mu)}=\frac{3}{2},\ \ r_3\mu>2,\ \ q_3(1-\mu)>2.
\end{align*}
Theorem \ref{Strichartz estimate} implies
\begin{align}
\|F_1\|_{L^{q_3(1-\mu)}(T,\infty;L^{q_0(1-\mu)})}\lesssim\|u(T-\varepsilon^{-\theta})\|_{L^2}+\|u_0\|_{L^2}=2\|u_0\|_{L^2}\lesssim1. \label{037}
\end{align}
On the other hand, by Theorem \ref{Dispersive estimate} and Sobolev's embedding, we have
\begin{align*}
\|F_1(t)\|_{L^\infty}
	&\leq\int_{I_1}\|e^{i(t-s)\mathcal{H}}(|u|^{p-1}u)(s)\|_{L^\infty}ds\lesssim\int_{I_1}\frac{1}{|t-s|^\frac{3}{2}}\|u(s)\|_{L^p}^pds\\
	&\lesssim\|u\|_{L^\infty H^1}^p\int_0^{T-\varepsilon^{-\theta}}(t-s)^{-\frac{3}{2}}ds\lesssim_E\left[(t-s)^{-\frac{1}{2}}\right]_0^{T-\varepsilon^{-\theta}}\lesssim(t-T+\varepsilon^{-\theta})^{-\frac{1}{2}}.
\end{align*}
Thus, we have
\begin{align}
\|F_1\|_{L^{r_3\mu}(T,\infty;L^\infty)}
	&\lesssim\left\|(t-T+\varepsilon^{-\theta})^{-\frac{1}{2}}\right\|_{L^{r_3\mu}(T,\infty)}=\left(\int_T^\infty(t-T+\varepsilon^{-\theta})^{-\frac{1}{2}r_3\mu}dt\right)^\frac{1}{r_3\mu} \notag \\
	&\sim\left(\left[-(t-T+\varepsilon^{-\theta})^{1-\frac{1}{2}r_3\mu}\right]_T^\infty\right)^\frac{1}{r_3\mu}=\varepsilon^{\left(\frac{1}{2}-\frac{1}{r_3\mu}\right)\theta}. \label{038}
\end{align}
Combining these inequalities \eqref{037} and \eqref{038}, we have
\begin{align}
\|F_1\|_{L^{q_0}(T,\infty;L^{q_0})}\leq\|F_1\|_{L^{q_3(1-\mu)}(T,\infty;L^{q_0(1-\mu)})}^{1-\mu}\|F_1\|_{L^{r_3\mu}(T,\infty;L^\infty)}^\mu\lesssim\varepsilon^{\left(\frac{\mu}{2}-\frac{1}{r_3}\right)\theta}. \label{039}
\end{align}
Next, we consider estimating $\|F_2\|_{L^{q_0}(T,\infty;L^{q_0})}$. Applying Proposition \ref{Virial identity} and the assumption of Theorem \ref{Scattering criterion}, we have
\begin{align*}
\left|\frac{d}{dt}\int_{\mathbb{R}^3}\chi_R(x)|u(t,x)|^2dx\right|&\leq2\left|\int_{\mathbb{R}^3}\overline{u}\nabla{u}\cdot\nabla\chi_Rdx\right|\leq2\|\nabla\chi_R\|_{L^\infty}\|u(t)\|_{L^2}\|\nabla u(t)\|_{L^2}\leq\frac{c}{R},
\end{align*}
where $\chi_R$ is defined as \eqref{072}. Thus, we have
\begin{align*}
-\frac{c}{R}\leq\frac{d}{dt}\int_{\mathbb{R}^3}\chi_R(x)|u(t,x)|^2dx\leq\frac{c}{R}.
\end{align*}
Integrating each terms in this inequality over $[t,T]$,
\begin{align*}
-\frac{c}{R}(T-t)\leq\int_{\mathbb{R}^3}\chi_R(x)|u(T,x)|^2dx-\int_{\mathbb{R}^3}\chi_R(x)|u(t,x)|dx\leq\frac{c}{R}(T-t).
\end{align*}
The left inequality implies
\begin{align}
\int_{\mathbb{R}^3}\chi_R(x)|u(t,x)|^2dx\leq\int_{\mathbb{R}^3}\chi_R(x)|u(T,x)|^2dx+\frac{c}{R}(T-t). \label{040}
\end{align}
Here, we choose $R>0$ satisfying $R>\varepsilon^{-2-\theta}$. By taking supremum on $I_2$ for \eqref{040} and using \eqref{073}, we have
\begin{align*}
\sup_{t\in I_2}\int_{\mathbb{R}^3}\chi_R(x)|u(t,x)|^2dx\leq\varepsilon^2+c\varepsilon^{2+\theta}\varepsilon^{-\theta}\lesssim c\varepsilon^2,
\end{align*}
that is,
\begin{align}
\|\chi_Ru\|_{L^\infty(I_2;L^2)}\leq c\varepsilon. \label{041}
\end{align}
By Lemma \ref{Radial Sobolev embedding}, this estimate \eqref{041}, H\"older's inequality, and Sobolev's embedding,
\begin{align}
	&\|u\|_{L^\frac{10}{3}(I_2;L^\frac{10}{3})}
%	=\left\|1\cdot\|u\|_{L^\frac{10}{3}}\right\|_{L^\frac{10}{3}(I_2)}
	\leq\|1\|_{L^\frac{10}{3}(I_2)}\|u\|_{L^\infty(I_2;L^\frac{10}{3})}\leq\varepsilon^{-\frac{3}{10}\theta}\|u\|_{L^\infty(I_2;L^\frac{10}{3})} \notag \\
	&\hspace{1.1cm}\leq\varepsilon^{-\frac{3}{10}\theta}\left(\|\chi_Ru\|_{L^\infty(I_2;L^\frac{10}{3})}+\|(1-\chi_R)u\|_{L^\infty(I_2;L^\frac{10}{3})}\right) \notag \\
%	&\hspace{1.1cm}\leq\varepsilon^{-\frac{3}{10}\theta}\left(\|\chi_Ru^\frac{2}{5}\|_{L^\infty(I_2;L^5)}\|u^\frac{3}{5}\|_{L^\infty(I_2;L^{10})}+\|(1-\chi_R)u^\frac{2}{5}\|_{L^\infty(I_2;L^\infty)}\|u^\frac{3}{5}\|_{L^\infty(I_2;L^\frac{10}{3})}\right) \notag \\
	&\hspace{1.1cm}\leq\varepsilon^{-\frac{3}{10}\theta}\left(\|\chi_Ru\|_{L^\infty(I_2;L^2)}^\frac{2}{5}\|u\|_{L^\infty(I_2;L^{6})}^\frac{3}{5}+\|(1-\chi_R)u\|_{L^\infty(I_2;L^\infty)}^\frac{2}{5}\|u\|_{L^\infty(I_2;L^2)}^\frac{3}{5}\right) \notag \\
	&\hspace{1.1cm}\leq\varepsilon^{-\frac{3}{10}\theta}\left(c\,\varepsilon^\frac{2}{5}\|u\|_{L^\infty(I_2;\dot{H}^1)}^\frac{3}{5}+\||x|^{-\frac{1}{2}}|x|^\frac{1}{2}u\|_{L^\infty(I_2;L^\infty(|x|\geq R/2))}^\frac{2}{5}\|u\|_{L^\infty(I_2;L^2)}^\frac{3}{5}\right) \notag \\
	&\hspace{1.1cm}\leq\varepsilon^{-\frac{3}{10}\theta}\left(c\,\varepsilon^\frac{2}{5}\|u\|_{L^\infty(I_2;\dot{H}^1)}^\frac{3}{5}+R^{-\frac{1}{5}}\|u\|_{L^\infty(I_2;H^1)}^\frac{2}{5}\|u\|_{L^\infty(I_2;L^2)}^\frac{3}{5}\right) \notag \\
	&\hspace{1.1cm}\leq\varepsilon^{-\frac{3}{10}\theta}\left(c\,\varepsilon^\frac{2}{5}+c\varepsilon^{\frac{1}{5}(2+\theta)}\right)\lesssim c\,\varepsilon^{\frac{2}{5}-\frac{3}{10}\theta}. \label{042}
\end{align}
By using Theorem \ref{Strichartz estimate} and a continuity argument, we have
\begin{align}
\||\nabla|^{s_c}u\|_{L^{10}(I_2;L^\frac{30}{13})}^{10}+\|u\|_{L^{10}(I_2;L^{10})}^{10}\lesssim1+|I_2|. \label{043}
\end{align}
From this inequality, Sobolev's embedding, Theorem \ref{Strichartz estimate}, Lemma \ref{Fractional calculus}, and H\"older's inequality, it follows that
\begin{align*}
\|F_2\|_{L^{q_0}(T,\infty;L^{q_0})}
%	&\lesssim\||\nabla|^{s_c}F_2\|_{L^{q_0}(T,\infty;L^{r_0})}\\
%	&\lesssim\left\|\int_{I_2}e^{-i(t-s)\mathcal{H}}|\nabla|^{s_c}(|u|^{p-1}u)(s)ds\right\|_{L^{q_0}(T,\infty;L^{r_0})}\\
	&\lesssim\||\nabla|^{s_c}(|u|^{p-1}u)\|_{L^2(I_2;L^\frac{6}{5})}\lesssim\left\|\||u|^{p-1}\|_{L^\frac{5}{2}}\||\nabla|^{s_c}u\|_{L^\frac{30}{13}}\right\|_{L^2(I_2)}\\
	&\leq\||u|^{p-1}\|_{L^\frac{5}{2}(I_2;L^\frac{5}{2})}\||\nabla|^{s_c}u\|_{L^{10}(I_2;L^\frac{30}{13})}\\
%	&=\|u\|_{L^{\frac{5}{2}(p-1)}(I_2;L^{\frac{5}{2}(p-1)})}^{p-1}\||\nabla|^{s_c}u\|_{L^{10}(I_2;L^\frac{30}{13})}\\
	&\lesssim(1+|I_2|)^\frac{1}{10}\left(\|u\|_{L^\frac{10}{3}(I_2;L^\frac{10}{3})}^{1-s_c}\|u\|_{L^{10}(I_2;L^{10})}^{s_c}\right)^{p-1}\\
	&\lesssim|I_2|^\frac{1}{10}\left(\varepsilon^{\left(\frac{2}{5}-\frac{3}{10}\theta\right)(1-s_c)}|I_2|^{\frac{1}{10}s_c}\right)^{p-1}\\
%	=|I_2|^{\frac{1}{10}+\frac{1}{10}s_c(p-1)}\varepsilon^{\left(\frac{2}{5}-\frac{3}{10}\theta\right)(1-s_c)(p-1)}\\
	&=\varepsilon^{-\frac{1}{10}\theta-\frac{1}{10}s_c(p-1)\theta}\varepsilon^{\left(\frac{2}{5}-\frac{3}{10}\theta\right)(1-s_c)(p-1)}=\varepsilon^{\frac{5-p}{5}-\frac{1}{2}\theta}.
%	=:\varepsilon^\omega.
\end{align*}
%Here, we calculate a power of $\varepsilon$.
%\begin{align*}
%\omega
%	&=-\frac{1}{10}\theta-\frac{1}{10}s_c(p-1)\theta+\left(\frac{2}{5}-\frac{3}{10}\theta\right)(1-s_c)(p-1)\\
%	&=-\frac{1}{10}\theta-\frac{1}{10}\left(\frac{3}{2}-\frac{2}{p-1}\right)(p-1)\theta+\left(\frac{2}{5}-\frac{3}{10}\theta\right)\left(-\frac{1}{2}+\frac{2}{p-1}\right)(p-1)\\
%	&=-\frac{1}{10}\theta-\frac{3p-7}{20}\theta+\frac{5-p}{5}-\frac{3(5-p)}{20}\theta\\
%	&=\frac{5-p}{5}-\frac{1}{2}\theta.
%\end{align*}
Thus, if we take $\theta=\frac{5-p}{5}\in(0,1)$, then
\begin{align}
\|F_2\|_{L^{q_0}(T,\infty;L^{q_0})}\lesssim\varepsilon^{\frac{1}{2}\theta}. \label{044}
\end{align}
Combining \eqref{036}, \eqref{060}, \eqref{039}, and \eqref{044}, we obtain
\begin{align*}
\|e^{-i(t-T)\mathcal{H}}u(T)\|_{L^{q_0}(T,\infty;L^{q_0})}\lesssim\varepsilon+\varepsilon^{\frac{1}{2}\theta}.
\end{align*}
From Lemma \ref{Small data global existence} and Lemma \ref{Small data scattering}, $u$ scatters.
\end{proof}

\begin{proposition}[Virial/Morawetz estimate]\label{Virial/Morawetz estimate}
Let $\frac{7}{3}<p<5$, $T>0$, $V\geq0$, $V\in\mathcal{K}_0\cap L^\frac{3}{2}$, $x\cdot\nabla V\leq0$, $x\cdot\nabla V\in L^\frac{3}{2}$, and $V$ be radially symmetric. We assume that $u$ is a global solution to (NLS$_V$) with radial symmetry satisfying
\begin{align*}
M[u_0]^{1-s_c}E_V[u_0]^{s_c}<(1-\delta)M[Q]^{1-s_c}E_0[Q]^{s_c},
\end{align*}
\begin{align*}
\|u_0\|_{L^2}^{1-s_c}\|\nabla u_0\|_{L^2}^{s_c}<\|Q\|_{L^2}^{1-s_c}\|\nabla Q\|_{L^2}^{s_c}
\end{align*}
for some $\delta>0$. Then, it follows that
\begin{align*}
\frac{1}{T}\int_0^T\int_{|x|\leq\frac{R}{2}}|u(t,x)|^{p+1}dxdt\lesssim\frac{R}{T}+\frac{1}{R^2}+\frac{1}{R^{p-1}}+o_R(1)
\end{align*}
for sufficiently large $R=R(\delta,M[u_0],Q)$.
\end{proposition}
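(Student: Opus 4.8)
The plan is to run a truncated virial/Morawetz argument in the spirit of Dodson--Murphy, using a radial weight that agrees with $|x|^2$ on the ball $\{|x|\le R/2\}$ and then saturates, and to close the resulting lower bound by transferring the coercivity of Proposition \ref{Coercivity1} to the cutoff $\chi_R u$. First I would fix a radial weight $\omega_R\in C^\infty(\mathbb{R}^3)$ with $\omega_R(x)=|x|^2$ for $|x|\le R/2$, with $0\le\omega_R''\le2$ and $0\le\tfrac{\omega_R'}{r}\le2$ everywhere, $|\nabla\omega_R|\lesssim R$, and $|\omega_R^{(3)}|\lesssim R^{-1}$, $|\omega_R^{(4)}|\lesssim R^{-2}$ on the transition annulus; I would also arrange $\omega_R''\ge2\chi_R^2$ so that the weight and the cutoff $\chi_R$ from \eqref{072} are compatible. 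Setting $I(t)=\int_{\mathbb{R}^3}\omega_R|u(t)|^2\,dx$, the functional $I'(t)=2\,\mathrm{Im}\int\overline u\,\nabla u\cdot\nabla\omega_R\,dx$ obeys $|I'(t)|\lesssim\|\nabla\omega_R\|_{L^\infty}\|u\|_{L^2}\|\nabla u\|_{L^2}\lesssim R$, where I use conservation of mass together with the uniform gradient bound from Proposition \ref{Coercivity1}(i). Integrating $I''$ over $[0,T]$ and dividing by $T$ then produces the boundary contribution $\tfrac{|I'(T)|+|I'(0)|}{T}\lesssim\tfrac RT$.

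Next I would compute $I''(t)$ from the radial localized virial identity \eqref{025}. For radial $u$ the two gradient terms collapse to $4\int\omega_R''|\nabla u|^2$, and since $\omega_R''\ge0$ this term is favorable; since $x\cdot\nabla V\le0$ and $\tfrac{\omega_R'}{r}\ge0$, the potential term $-2\int\tfrac{\omega_R'}{r}(x\cdot\nabla V)|u|^2$ is nonnegative and may be discarded in the lower bound. Using $\omega_R''\ge2\chi_R^2$ together with the commutator identity \eqref{061}, I would bound $4\int\omega_R''|\nabla u|^2\ge8\|\nabla(\chi_R u)\|_{L^2}^2-\mathcal{O}(R^{-2}M[u_0])$; likewise, since $\omega_R''+\tfrac2r\omega_R'$ equals $6$ on $\{|x|\le R/2\}$ and is bounded beyond it, the nonlinear term is $\ge-\tfrac{12(p-1)}{p+1}\|\chi_R u\|_{L^{p+1}}^{p+1}$ up to an exterior error $\int_{|x|>R/2}|u|^{p+1}$, which the radial Sobolev inequality (Lemma \ref{Radial Sobolev inequality}) controls by $\lesssim R^{-(p-1)}$. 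The biharmonic term $-\int(\omega_R^{(4)}+\tfrac4r\omega_R^{(3)})|u|^2$ is supported on the annulus and contributes $\mathcal{O}(R^{-2}M[u_0])$. Collecting these, $I''(t)\ge8\bigl(\|\nabla(\chi_R u)\|_{L^2}^2-\tfrac{3(p-1)}{2(p+1)}\|\chi_R u\|_{L^{p+1}}^{p+1}\bigr)-\mathcal{O}(R^{-2})-\mathcal{O}(R^{-(p-1)})$.

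The decisive step is the lower bound on the bracket. Running the computation of Proposition \ref{Coercivity1}(ii) verbatim but with $\chi_R u$ in place of $u$ — which is legitimate precisely because Proposition \ref{Coercivity1}(iii) places $\chi_R u$ strictly below the ground-state threshold — yields $\|\nabla(\chi_R u)\|_{L^2}^2-\tfrac{3(p-1)}{2(p+1)}\|\chi_R u\|_{L^{p+1}}^{p+1}\ge c\|\chi_R u\|_{L^{p+1}}^{p+1}\ge c\int_{|x|\le R/2}|u|^{p+1}$, using $\chi_R\equiv1$ on that ball. Hence $I''(t)\gtrsim\int_{|x|\le R/2}|u(t)|^{p+1}-\mathcal{O}(R^{-2})-\mathcal{O}(R^{-(p-1)})$, and integrating in $t$ and dividing by $T$ gives the stated estimate, with any residual mismatch between $\omega_R''$ and $2\chi_R^2$ on the transition annulus absorbed into the $o_R(1)$ term.

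I expect the main obstacle to be exactly this localization: transferring the global coercivity of Proposition \ref{Coercivity1}(ii)--(iii) to the truncated functional without losing the favorable sign, which is what forces the compatible choice of $\omega_R$ and $\chi_R$ and the careful bookkeeping of the exterior kinetic-energy tail $\int_{|x|>R/2}|\nabla u|^2$. Handling this tail uniformly in $t$ after time-averaging is the delicate point, and it is what ultimately produces the $o_R(1)$ contribution; everything else reduces to the pointwise weight bounds, the radial Sobolev inequality, and mass conservation.
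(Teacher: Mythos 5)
Your proposal is correct and follows essentially the same route as the paper's proof: a truncated virial/Morawetz functional whose weight is $|x|^2$ near the origin and linear at infinity, insertion of the cutoff $\chi_R$ so that the coercivity of Proposition \ref{Coercivity1} (ii)--(iii) applies to $\chi_R u$, the radial Sobolev bound for the exterior tail $\int_{|x|>R/2}|u|^{p+1}\lesssim R^{-(p-1)}$, and mass conservation plus the uniform gradient bound to get $|I'(t)|\lesssim R$ before time-averaging. The only cosmetic differences are that you use the radial form \eqref{025} of the virial identity from the outset and discard the potential term everywhere by its sign, whereas the paper works from the general identity, splits into three spatial regions, and bounds the annulus potential contribution by $o_R(1)$ via $x\cdot\nabla V\in L^{\frac{3}{2}}$ --- your variant is, if anything, slightly cleaner on that term.
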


\begin{proof}
We set a function
\begin{equation}
\notag w(x)=
\begin{cases}
&\hspace{-0.15cm}|x|^2\hspace{0.98cm}\left(|x|\leq1\right),\\
&\hspace{-0.4cm}\text{smooth}\hspace{0.6cm}\left(1<|x|<2\right),\\
&\hspace{-0.4cm}3|x|-4\hspace{0.55cm}\left(2\leq|x|\right),
\end{cases}
\end{equation}
which satisfies $\partial_rw\geq0$, $\partial_r^2w\geq0$, and $|\partial^\alpha w(x)|\lesssim_\alpha |x|^{-|\alpha|+1}$ for $1<|x|<2$. We define $w_R$ as
\begin{align}
w_R(x)=R^2w\left(\frac{x}{R}\right)\label{075}
\end{align}
for $R>0$. By a direct calculation, we have $\partial_jw_R=2x_j$, $\partial_{kj}w_R=2\delta_{kj}$, $\Delta w_R=6$, and $\Delta\Delta w_R=0$ for $|x|\leq R$ and $\partial_jw_R=\frac{3Rx_j}{|x|}$, $\partial_{kj}w_R=\frac{3R}{|x|}\left[\delta_{jk}-\frac{x_jx_k}{|x|^2}\right]$, $\Delta w_R=\frac{6R}{|x|}$, and $\Delta\Delta w_R=0$ for $2R\leq|x|$.
We difine a function $M(t)$ as
\begin{align*}
M(t)\vcentcolon=2\text{Im}\int_{\mathbb{R}^3}\overline{u}\nabla u\cdot\nabla w_Rdx.
\end{align*}
By H\"older's inequlity, we have
\begin{align*}
|M(t)|
%	&=\left|2\text{Im}\int_{\mathbb{R}^3}\overline{u}\nabla u\cdot\nabla w_Rdx\right|\\
	&\lesssim\|u(t)\|_{L^2}\|\nabla u(t)\|_{L^2}\left(\|\nabla w_R\|_{L^\infty(|x|\leq R)}+\|\nabla w_R\|_{L^\infty(R\leq|x|\leq 2R)}+\|\nabla w_R\|_{L^\infty(2R\leq|x|)}\right)\\
	&\lesssim\|u(t)\|_{L^2}\|\nabla u(t)\|_{L^2}R.
\end{align*}
Hence, we have
\begin{align}
\sup_{t\in\mathbb{R}}|M(t)|\lesssim_uR. \label{074}
\end{align}
Using Proposition \ref{Virial identity},
\begin{align}
\frac{d}{dt}
	&M(t)=\int_{|x|\leq R}8|\nabla u|^2-\frac{12(p-1)}{p+1}|u|^{p+1}-4(x\cdot\nabla V)|u|^2dx \label{045} \\
	&\hspace{0.5cm}+\int_{R\leq|x|\leq 2R}4\text{Re}\,\partial_{jk}w_Ru_j\overline{u}_k-\frac{2(p-1)}{p+1}\Delta w_R|u|^{p+1}-\Delta^2w_R|u|^2-2(\nabla w_R\cdot\nabla V)|u|^2dx \label{046} \\
	&\hspace{0.5cm}+\int_{2R\leq|x|}12\text{Re}\frac{R}{|x|}\left[\delta_{jk}-\frac{x_jx_k}{|x|^2}\right]\overline{u_j}u_k-\frac{12(p-1)}{p+1}\frac{R}{|x|}|u|^{p+1}-\frac{6R}{|x|}(x\cdot\nabla V)|u|^2dx. \label{047}
\end{align}
For \eqref{045}, using \eqref{061} and Proposition \ref{Coercivity1} (ii), (iii), we have
\begin{align}
	&\int_{|x|\leq R}8|\nabla u|^2-\frac{12(p-1)}{p+1}|u|^{p+1}-4(x\cdot\nabla V)|u|^2dx \notag \\
	&\hspace{1cm}\geq\int_{|x|\leq R}8|\chi_R\nabla u|^2-\frac{12(p-1)}{p+1}|\chi_Ru|^{p+1}dx+\frac{12(p-1)}{p+1}\int_{|x|\leq R}|\chi_Ru|^{p+1}-|u|^{p+1}dx \notag \\
	&\hspace{1cm}=\int_{\mathbb{R}^3}8|\chi_R\nabla u|^2-\frac{12(p-1)}{p+1}|\chi_Ru|^{p+1}dx+\frac{12(p-1)}{p+1}\int_{\frac{R}{2}\leq|x|\leq R}|\chi_Ru|^{p+1}-|u|^{p+1}dx \notag \\
	&\hspace{1cm}=\int_{\mathbb{R}^3}8|\nabla(\chi_Ru)|^2+\frac{8}{R^2}\chi_R(\Delta\chi)\left(\frac{x}{R}\right)|u|^2-\frac{12(p-1)}{p+1}|\chi_Ru|^{p+1}dx \notag \\
	&\hspace{7.5cm}+\frac{12(p-1)}{p+1}\int_{\frac{R}{2}\leq|x|\leq R}|\chi_Ru|^{p+1}-|u|^{p+1}dx \notag \\
	&\hspace{1cm}\geq c\|\chi_Ru\|_{L^{p+1}}^{p+1}+\int_{\mathbb{R}^3}\frac{8}{R^2}\chi_R(\Delta\chi)\left(\frac{x}{R}\right)|u|^2dx-\frac{12(p-1)}{p+1}\int_{\frac{R}{2}\leq|x|\leq R}|u|^{p+1}dx, \label{048}
\end{align}
where $\chi_R$ is defined as \eqref{072}. For \eqref{046}, by the identity
\begin{align*}
\partial_{jk}w_R=w_R''(r)\frac{x_jx_k}{r^2}+w_R'(r)\left(\frac{\delta_{jk}}{r}-\frac{x_jx_k}{r^3}\right),
\end{align*}
we have
\begin{align}
\sum_{1\leq j,k\leq3}\overline{u}_ju_k\partial_{jk}w_R
	&=\sum_{j=1}^3|u_j|^2\partial_j^2w_R+\sum_{1\leq j\neq k\leq3}\overline{u}_ju_k\partial_{jk}w_R \notag \\
	&=\sum_{j=1}^3|u'(r)|^2\frac{x_j^2}{r^2}\cdot\left\{w_R''(r)\frac{x_j^2}{r^2}+w_R'(r)\left(\frac{1}{r}-\frac{x_j^2}{r^3}\right)\right\} \notag \\
	&\hspace{2.0cm}+\sum_{1\leq j\neq k\leq3}|u'(r)|^2\frac{x_jx_k}{r^2}\cdot\left(w_R''(r)\frac{x_jx_k}{r^2}-w_R'(r)\frac{x_jx_k}{r^3}\right) \notag \\
	&=\sum_{1\leq j,k\leq3}|u'(r)|^2w_R''(r)\frac{x_j^2x_k^2}{r^4} \notag \\
	&\hspace{2.0cm}+\sum_{j=1}^3|u'(r)|^2w_R'(r)\frac{x_j^2(r^2-x_j^2)}{r^5}-\sum_{1\leq j\neq k\leq3}|u'(r)|^2w_R'(r)\frac{x_j^2x_k^2}{r^5} \notag \\
	&=\sum_{1\leq j,k\leq3}|u'(r)|^2w_R''(r)\frac{x_j^2x_k^2}{r^4}\geq0. \label{049}
\end{align}
Since $x\cdot\nabla V\in L^\frac{3}{2}$, we have
\begin{align}
\sup_{t\in\mathbb{R}}\left|\int_{R\leq|x|\leq2R}(\nabla w_R\cdot\nabla V)|u|^2dx\right|
	&\lesssim\sup_{t\in\mathbb{R}}\int_{R\leq|x|\leq2R}|x\cdot\nabla V||u|^2dx \notag \\
	&\leq\|x\cdot\nabla V\|_{L^\frac{3}{2}(R\leq|x|\leq2R)}\|u\|_{L^\infty L^6}^2 \notag \\
	&\lesssim\|x\cdot\nabla V\|_{L^\frac{3}{2}(R\leq|x|\leq2R)}\|\nabla u\|_{L^\infty L^2}^2 \notag \\
	&\leq\|x\cdot\nabla V\|_{L^\frac{3}{2}(R\leq|x|\leq2R)}\frac{\|Q\|_{L^2}^2\|\nabla Q\|_{L^2}^2}{\|u\|_{L^2}^2} \notag \\
	&=o_R(1). \label{050}
\end{align}
For \eqref{047}, since a function $u$ is radially symmetric,
\begin{align}
\left(\delta_{jk}-\frac{x_jx_k}{|x|^2}\right)\overline{u_j}u_k
	&=\sum_{1\leq j,k\leq3}\left(\delta_{jk}-\frac{x_jx_k}{|x|^2}\right)\overline{u_j}u_k \notag \\
	&=\sum_{j=1}^3\left(1-\frac{x_j^2}{|x|^2}\right)\cdot|u'(r)|^2\frac{x_j^2}{|x|^2}-\sum_{1\leq j\neq k\leq3}\frac{x_jx_k}{|x|^2}\cdot|u'(r)|^2\frac{x_jx_k}{|x|^2} \notag \\
	&=\frac{1}{|x|^4}|u'(r)|^2\left\{\sum_{j=1}^3x_j^2(|x|^2-x_j^2)-\sum_{1\leq j\neq k\leq3}x_j^2x_k^2\right\}=0. \label{051}
\end{align}
Thus, we obtain
\begin{align*}
c\|\chi_Ru\|_{L^{p+1}}^{p+1}
	&\leq \frac{d}{dt}M(t)-\int_{\mathbb{R}^3}\frac{8}{R^2}\chi_R(\Delta\chi)\left(\frac{x}{R}\right)|u|^2dx+\frac{12(p-1)}{p+1}\int_{\frac{R}{2}\leq|x|}|u|^{p+1}dx\\
	&\hspace{4.5cm}+\int_{R\leq|x|\leq 2R}(\Delta\Delta w_R)|u|^2+|u|^{p+1}\Delta w_Rdx+o_R(1)\\
	&\lesssim \frac{d}{dt}M(t)+\frac{1}{R^2}M[u_0]+\int_{\frac{R}{2}\leq|x|}|u|^{p+1}dx+o_R(1).
\end{align*}
Integrating both sides of this inequality over $[0,T]$,
\begin{align*}
\int_0^Tc\|\chi_Ru\|_{L^{p+1}}^{p+1}dt\lesssim\sup_{t\in[0,T]}|M(t)|+\frac{T}{R^2}M[u_0]+\int_0^T\int_{\frac{R}{2}\leq|x|}|u|^{p+1}dx+o_R(1)T.
\end{align*}
By \eqref{074},
\begin{align*}
c\int_0^T\int_{|x|\leq\frac{R}{2}}|u|^{p+1}dxdt\lesssim R+\frac{T}{R^2}M[u_0]+\int_0^T\int_{\frac{R}{2}\leq|x|}|u|^{p+1}dxdt+o_R(1)T.
\end{align*}
Here, using Lemma \ref{Radial Sobolev embedding},
\begin{align*}
\int_{\frac{R}{2}\leq|x|}|u|^{p+1}dx\leq\||\cdot|u\|_{L^\infty}^{p-1}\int_{\frac{R}{2}\leq|x|}\frac{1}{|x|^{p-1}}|u|^2dx\lesssim\frac{1}{R^{p-1}}\|u\|_{H^1}^{p-1}M[u_0]\lesssim_u\frac{1}{R^{p-1}}M[u_0].
\end{align*}
Therefore,
\begin{align*}
\frac{1}{T}\int_0^T\int_{|x|\leq\frac{R}{2}}|u|^{p+1}dxdt\lesssim_{u,\delta}\frac{R}{T}+\frac{1}{R^2}+\frac{1}{R^{p-1}}+o_R(1),
\end{align*}
which completes the proof of this proposition.
\end{proof}

\begin{proposition}[Potential energy evacuation]\label{Energy evacuation}
Let $u$ be a solution to (NLS$_V$) satisfying the condition in Theorem \ref{Scattering versus blowing-up or growing-up} (i). Then, there exist sequences $\{t_n\}$ with $t_n\rightarrow\infty$ and $\{R_n\}$ with $R_n\rightarrow\infty$ such that
\begin{align*}
\liminf_{n\rightarrow\infty}\int_{|x|\leq R_n}|u(t_n,x)|^{p+1}dx=0.
\end{align*}
\end{proposition}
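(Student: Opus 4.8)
The plan is to read the conclusion straight off the Virial/Morawetz estimate of Proposition \ref{Virial/Morawetz estimate} by a pigeonhole argument, with the only extra care being to force the times to go to infinity. First I would check that the hypotheses of Theorem \ref{Scattering versus blowing-up or growing-up} (i) are precisely those needed to invoke Proposition \ref{Virial/Morawetz estimate}: $u$ is a global radial solution, $V\geq0$ is radial with $V\in\mathcal{K}_0\cap L^\frac{3}{2}$, $x\cdot\nabla V\leq0$, $x\cdot\nabla V\in L^\frac{3}{2}$, and the mass--energy and gradient constraints hold. Thus the time-averaged bound
\begin{align*}
\frac{1}{T}\int_0^T\int_{|x|\leq\frac{R}{2}}|u(t,x)|^{p+1}dx\,dt\lesssim\frac{R}{T}+\frac{1}{R^2}+\frac{1}{R^{p-1}}+o_R(1)
\end{align*}
is available for all sufficiently large $R$.

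Next I would note that this estimate in fact holds over an arbitrary time window, not just $[0,T]$. Inspecting the proof, the left endpoint enters only through the integration of $\frac{d}{dt}M(t)$ and is controlled solely by the uniform bound \eqref{074} on $\sup_{t}|M(t)|$; integrating over $[a,a+S]$ instead gives $|M(a+S)-M(a)|\lesssim_u R$ by the same inequality. Hence the shifted form
\begin{align*}
\frac{1}{S}\int_a^{a+S}\int_{|x|\leq\frac{R}{2}}|u(t,x)|^{p+1}dx\,dt\lesssim\frac{R}{S}+\frac{1}{R^2}+\frac{1}{R^{p-1}}+o_R(1)
\end{align*}
holds for every $a\geq0$ and $S>0$. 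Given $\varepsilon>0$ and a threshold $a$, I would then first fix $R=R(\varepsilon)$ so large that the steady-state terms $\frac{1}{R^2}+\frac{1}{R^{p-1}}+o_R(1)$ are below $\varepsilon$, and afterwards choose $S=S(\varepsilon,R)$ so large that $\frac{R}{S}<\varepsilon$. The average on the left is then $\lesssim\varepsilon$, so by the mean value property for integrals (using continuity of $t\mapsto\|u(t)\|_{L^{p+1}}$, which follows from $u\in C(\mathbb{R};H^1)$ and Sobolev embedding) there is a time $t^*\in[a,a+S]$ with $\int_{|x|\leq R/2}|u(t^*,x)|^{p+1}dx\lesssim\varepsilon$, and crucially $t^*\geq a$.

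Finally I would diagonalize: applying the previous step with $\varepsilon=\frac1n$ and $a=n$ produces, for each $n$, a radius $R_n$ (which I may take increasing, so $R_n\to\infty$) and a time $t_n\geq n$ with $\int_{|x|\leq R_n/2}|u(t_n,x)|^{p+1}dx\lesssim\frac1n$. Relabelling $R_n/2$ as the radius yields sequences with $t_n\to\infty$, radii tending to infinity, and spatial integrals tending to $0$; this gives in fact the limit (hence the asserted $\liminf$) equal to zero. The one genuinely delicate point is guaranteeing $t_n\to\infty$, which is exactly why the interval-shift version of Proposition \ref{Virial/Morawetz estimate} is needed: working on a single window $[0,T]$ could a priori leave all the ``good'' times bunched near the origin, whereas shifting the base point $a$ to $n$ pins each $t_n$ above $n$.
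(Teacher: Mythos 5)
Your proof is correct, but it takes a genuinely different route from the paper's. The paper uses Proposition \ref{Virial/Morawetz estimate} as a black box with the coupling $T=R^3$: since $t\leq R^3$ implies $\frac{1}{2}t^{1/3}\leq\frac{R}{2}$, the estimate \eqref{058} also controls $\frac{1}{R^3}\int_0^{R^3}\int_{|x|\leq\frac{1}{2}t^{1/3}}|u|^{p+1}dx\,dt$, and a contradiction argument then shows $\liminf_{t\rightarrow\infty}\int_{|x|\leq\frac{1}{2}t^{1/3}}|u(t,x)|^{p+1}dx=0$, after which the sequences are read off along the curve $R_n=\frac{1}{2}t_n^{1/3}$. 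The issue you flag --- that the good times might bunch near $t=0$ --- is handled there not by shifting the window but by the averaging itself: if the liminf were $\alpha>0$, the initial segment $[0,t_0]$ carries weight $t_0/R^3\rightarrow0$ in the average, so the average stays above $\alpha/2$ in the limit, contradicting the decay of the right-hand side of \eqref{058}. Your argument instead reopens the proof of Proposition \ref{Virial/Morawetz estimate} to observe that the bound \eqref{074} and all the pointwise-in-time estimates (coercivity, mass conservation, the uniform $H^1$ bound, and the time-uniform $o_R(1)$ control of the $x\cdot\nabla V$ term) are independent of the base point, so the estimate is time-translation invariant; this observation is correct, and the subsequent pigeonhole-plus-diagonalization ($\varepsilon=\frac{1}{n}$, $a=n$, $R_n\geq n$, which is legitimate since for fixed $\varepsilon$ every sufficiently large $R$ works) is sound. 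What your route buys is a stronger, window-local statement (good times in every sufficiently long window, hence in fact a full limit rather than a liminf) at the cost of having to re-inspect the proof of the Morawetz estimate rather than quote it; the paper's route stays at the level of the stated proposition but needs the slightly clever time-dependent radius and the contradiction step.
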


\begin{proof}
Applying Proposition \ref{Virial/Morawetz estimate} with $T=R^3$ implies
\begin{align}
\frac{1}{R^3}\int_0^{R^3}\int_{|x|\leq\frac{R}{2}}|u(t,x)|^{p+1}dxdt\lesssim\frac{1}{R^2}+\frac{1}{R^{p-1}}+o_R(1). \label{058}
\end{align}
By contradiction, we will prove
\begin{align}
\liminf_{t\rightarrow\infty}\int_{|x|\leq\frac{1}{2}t^\frac{1}{3}}|u(t,x)|^{p+1}dx=0. \label{059}
\end{align}
We assume that
\begin{align*}
\liminf_{t\rightarrow\infty}\int_{|x|\leq\frac{1}{2}t^\frac{1}{3}}|u(t,x)|^{p+1}dx=:\alpha>0.
\end{align*}
%that is,
%\begin{align*}
%\lim_{t\rightarrow\infty}\inf_{s>t}\int_{|x|\leq\frac{1}{2}s^\frac{1}{3}}|u(s,x)|^{p+1}dx=\alpha>0.
%\end{align*}
Then, there exists $t_0>0$ such that
\begin{align*}
\inf_{s>t}\int_{|x|\leq\frac{1}{2}s^\frac{1}{3}}|u(s,x)|^{p+1}dx>\frac{\alpha}{2}>0
\end{align*}
for any $t>t_0$. Therefore, we have
\begin{align*}
	&\frac{1}{R^3}\int_0^{R^3}\int_{|x|\leq\frac{1}{2}t^\frac{1}{3}}|u(t,x)|^{p+1}dxdt\\
	&\hspace{3.0cm}=\frac{1}{R^3}\int_0^{t_0}\int_{|x|\leq\frac{1}{2}t^\frac{1}{3}}|u(t,x)|^{p+1}dxdt+\frac{1}{R^3}\int_{t_0}^{R^3}\int_{|x|\leq\frac{1}{2}t^\frac{1}{3}}|u(t,x)|^{p+1}dxdt\\
	&\hspace{3.0cm}>\frac{1}{R^3}\int_{t_0}^{R^3}\frac{\alpha}{2}dt=\frac{R^3-t_0}{R^3}\cdot\frac{\alpha}{2}\longrightarrow\frac{\alpha}{2}>0\ \ \ \text{as}\ \ \ R\rightarrow\infty.
\end{align*}
This is contradiction with
\begin{align*}
\frac{1}{R^3}\int_0^{R^3}\int_{|x|\leq\frac{1}{2}t^\frac{1}{3}}|u(t,x)|^{p+1}dxdt\lesssim\frac{1}{R^2}+\frac{1}{R^{p-1}}+o_R(1)\longrightarrow0\ \ \ \text{as}\ \ \ R\rightarrow\infty,
\end{align*}
where we have used \eqref{058}. Consequently, by \eqref{059}, we can take $\{t_n\}:t_n\rightarrow\infty$ and $\{R_n\}:R_n=\frac{1}{2}t_n^\frac{1}{3}\rightarrow\infty$ such that
\begin{align*}
\lim_{n\rightarrow\infty}\int_{|x|\leq R_n}|u(t_n,x)|^{p+1}dx=0,
\end{align*}
which completes the proof of this proposition.
\end{proof}

Finally, we prove Theorem \ref{Scattering versus blowing-up or growing-up} (i).

\begin{proof}[Proof of Theorem \ref{Scattering versus blowing-up or growing-up} (i)]
By Proposition \ref{Coercivity1} (i), $u$ is globally in time and uniformly bounded in $H^1$. Fix $\varepsilon$ and $R$ as in Theorem \ref{Scattering criterion}. Now take sequences $\{t_n\}$ and $\{R_n\}$ satisfying $t_n\rightarrow\infty$ and $R_n\rightarrow\infty$ as in Proposition \ref{Energy evacuation}. Then, by choosing $n$ large enough such that $R_n\geq R$, H\"older's inequality and Proposition \ref{Energy evacuation} give
\begin{align*}
\int_{|x|\leq R}|u(t_n,x)|^2dx
	&\leq\left(\int_{|x|\leq R}dx\right)^\frac{p-1}{p+1}\left(\int_{|x|\leq R_n}|u(t_n,x)|^{p+1}dx\right)^\frac{2}{p+1}\\
	&\lesssim R^\frac{3(p-1)}{p+1}\left(\int_{|x|\leq R_n}|u(t_n,x)|^{p+1}dx\right)^\frac{2}{p+1}\longrightarrow0\ \ \text{as}\ \ n\rightarrow\infty.
\end{align*}
\end{proof}

Applying Theorem \ref{Scattering criterion}, $u$ scatters in positive time.

\section{Proof of blowing-up or growing-up part in Theorem \ref{Scattering versus blowing-up or growing-up}}

We prove the blows-up or grows-up part of Theorem \ref{Scattering versus blowing-up or growing-up} in this section.

\begin{lemma}[Coercivity \Rnum{2}]\label{Coercivity2}
Let $\frac{7}{3}<p<5$, $V\geq0$, and ``$V\in\mathcal{K}_0\cap L^\frac{3}{2}$ or $V\in L^\sigma$ for some $\sigma>\frac{3}{2}$''. Let $u$ be a solution to (NLS$_V$) with a data $u_0\in H^1$. Assume that $u_0$ satisfies
\begin{align*}
M[u_0]^{1-s_c}E_V[u_0]^{s_c}<(1-\delta)M[Q]^{1-s_c}E_0[Q]^{s_c}
\end{align*}
for some $\delta>0$ and
\begin{align*}
\|u_0\|_{L^2}^{1-s_c}\|\mathcal{H}^\frac{1}{2}u_0\|_{L^2}^{s_c}>\|Q\|_{L^2}^{1-s_c}\|\nabla Q\|_{L^2}^{s_c}.
\end{align*}
Then, there exists $\delta'=\delta'(\delta)>0$ such that
\begin{align*}
\|u(t)\|_{L^2}^{1-s_c}\|\mathcal{H}^\frac{1}{2}u(t)\|_{L^2}^{s_c}>(1+\delta')\|Q\|_{L^2}^{1-s_c}\|\nabla Q\|_{L^2}^{s_c}
\end{align*}
for any $t\in(T_\text{min},T_\text{max})$.
\end{lemma}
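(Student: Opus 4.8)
The plan is to run the same variational analysis as in Proposition \ref{Coercivity1} (i), but with the operator $\mathcal{H}^{\frac12}$ in place of $\nabla$, using $V\geq0$ to absorb the potential into the nonlinear estimate. Write $n(t):=\|u(t)\|_{L^2}^{1-s_c}\|\mathcal{H}^{\frac12}u(t)\|_{L^2}^{s_c}$ and $n_Q:=\|Q\|_{L^2}^{1-s_c}\|\nabla Q\|_{L^2}^{s_c}$, and introduce the normalized quantity
\begin{align*}
y(t):=\left(\frac{n(t)}{n_Q}\right)^{\frac{1}{s_c}}=\frac{\|u(t)\|_{L^2}^{\frac{1-s_c}{s_c}}\|\mathcal{H}^{\frac12}u(t)\|_{L^2}}{\|Q\|_{L^2}^{\frac{1-s_c}{s_c}}\|\nabla Q\|_{L^2}}.
\end{align*}
The hypothesis $n(0)>n_Q$ reads $y(0)>1$, and the desired conclusion is equivalent to a lower bound $y(t)\geq y_+$ for a constant $y_+=y_+(\delta)>1$, from which one recovers $\delta'=y_+^{s_c}-1>0$.

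First I would produce a lower bound for the energy phrased through $\mathcal{H}^{\frac12}$. Since $V\geq0$ gives $\|\nabla u\|_{L^2}\leq\|\mathcal{H}^{\frac12}u\|_{L^2}$, the Gagliardo--Nirenberg inequality \eqref{005} yields $\|u\|_{L^{p+1}}^{p+1}\leq C_\text{GN}\|u\|_{L^2}^{\frac{5-p}{2}}\|\nabla u\|_{L^2}^{\frac{3(p-1)}{2}}\leq C_\text{GN}\|u\|_{L^2}^{\frac{5-p}{2}}\|\mathcal{H}^{\frac12}u\|_{L^2}^{\frac{3(p-1)}{2}}$, and hence, because this term is subtracted in the energy,
\begin{align*}
E_V[u]=\frac12\|\mathcal{H}^{\frac12}u\|_{L^2}^2-\frac{1}{p+1}\|u\|_{L^{p+1}}^{p+1}\geq\frac12\|\mathcal{H}^{\frac12}u\|_{L^2}^2-\frac{C_\text{GN}}{p+1}\|u\|_{L^2}^{\frac{5-p}{2}}\|\mathcal{H}^{\frac12}u\|_{L^2}^{\frac{3(p-1)}{2}}.
\end{align*}
Multiplying by $M[u]^{\frac{1-s_c}{s_c}}$, substituting the sharp constant and $E_0[Q]=\frac{3p-7}{6(p-1)}\|\nabla Q\|_{L^2}^2$ from \eqref{021}, and invoking conservation of mass and energy, the same algebra as in Proposition \ref{Coercivity1} (i) produces
\begin{align*}
M[u_0]^{\frac{1-s_c}{s_c}}E_V[u_0]\geq M[Q]^{\frac{1-s_c}{s_c}}E_0[Q]\,g(y(t)),\qquad g(y):=\frac{3(p-1)}{3p-7}y^2-\frac{4}{3p-7}y^{\frac{3(p-1)}{2}},
\end{align*}
where $g$ is exactly the function from the proof of Proposition \ref{Coercivity1} (i).

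Next I would analyze $g$ and run a continuity argument. One checks $g(0)=0$, $g(1)=1$ and $g'(y)=\frac{6(p-1)}{3p-7}y\left(1-y^{\frac{3p-7}{2}}\right)$, so (as $p>\frac73$) $g$ increases strictly on $[0,1]$ and decreases strictly on $[1,\infty)$ to $-\infty$. Raising the assumed bound $M[u_0]^{1-s_c}E_V[u_0]^{s_c}<(1-\delta)M[Q]^{1-s_c}E_0[Q]^{s_c}$ to the power $\frac{1}{s_c}$ (using $E_0[Q]>0$ and, in the main case, $E_V[u_0]>0$) and combining with the previous display gives $g(y(t))<(1-\delta)^{\frac{1}{s_c}}<1$ for all $t$; if instead $E_V[u_0]\leq0$ the same display forces $g(y(t))\leq0$ directly. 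Since $(1-\delta)^{\frac{1}{s_c}}<1$, the equation $g(y)=(1-\delta)^{\frac{1}{s_c}}$ has exactly two roots $0<y_-<1<y_+$, and the barrier $\{y\geq0:g(y)\geq(1-\delta)^{\frac{1}{s_c}}\}=[y_-,y_+]$ is never attained by $y(t)$. The map $t\mapsto y(t)$ is continuous on the connected interval $(T_\text{min},T_\text{max})$ since $t\mapsto\|\mathcal{H}^{\frac12}u(t)\|_{L^2}^2=\|\nabla u(t)\|_{L^2}^2+\int_{\mathbb{R}^3}V|u(t)|^2dx$ is continuous (from $u\in C(I;H^1)$ together with Lemma \ref{Norm equivalence} when $V\in\mathcal{K}_0\cap L^\frac32$, or H\"older's inequality and $H^1\hookrightarrow L^q$ when $V\in L^\sigma$). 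Because $y(0)>1$ lies to the right of the maximum and $g(y(0))<(1-\delta)^{\frac{1}{s_c}}$, monotonicity of $g$ on $(1,\infty)$ forces $y(0)>y_+$; continuity then traps $y(t)$ in the component $(y_+,\infty)$, so $y(t)>y_+>1$ for all $t$. Taking $\delta'=y_+^{s_c}-1$, which depends only on $\delta$ and the fixed $p$, yields the stated inequality.

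The one genuinely delicate point, and the place where $V\geq0$ is essential, is the first step: the focusing nonlinearity must be bounded by $\|\mathcal{H}^{\frac12}u\|_{L^2}$ rather than $\|\nabla u\|_{L^2}$, so that the whole estimate is expressed through the conserved energy $E_V$ and the quantity $\|\mathcal{H}^{\frac12}u\|_{L^2}$ occurring in the statement. Everything else is the mirror image of Proposition \ref{Coercivity1} (i): there $y(0)<1$ placed the trajectory in the left component $[0,y_-)$, whereas here $y(0)>1$ places it in the right component $(y_+,\infty)$, and the same trapping-by-continuity argument applies. The remaining technical verification is the continuity of $t\mapsto\int_{\mathbb{R}^3}V|u(t)|^2dx$ in the merely $L^\sigma$ case, which follows from the $H^1$-continuity of the flow and Sobolev embedding.
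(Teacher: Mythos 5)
Your proposal is correct and follows essentially the same route as the paper: bound the nonlinear term via Gagliardo--Nirenberg and $\|\nabla u\|_{L^2}\leq\|\mathcal{H}^{\frac{1}{2}}u\|_{L^2}$ (from $V\geq0$), reduce to the inequality $g(y(t))<(1-\delta)^{\frac{1}{s_c}}$ for the same function $g(y)=\frac{3(p-1)}{3p-7}y^2-\frac{4}{3p-7}y^{\frac{3(p-1)}{2}}$, and conclude by the shape of $g$ together with continuity of $y(t)$ and $y(0)>1$. Your explicit trapping argument, the case $E_V[u_0]\leq0$, and the continuity check for $t\mapsto\int_{\mathbb{R}^3}V|u(t)|^2dx$ merely spell out details the paper compresses into ``Combining these facts and the assumption\dots, there exists $\delta'$''.
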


\begin{proof}
By $V\geq0$ and Gagliardo-Nirenberg's inequality, we have
\begin{align*}
(1-\delta)^\frac{1}{s_c}M
	&[Q]^\frac{1-s_c}{s_c}E_0[Q]>M[u_0]^\frac{1-s_c}{s_c}E_V[u_0]\\
%	&=\|u(t)\|_{L^2}^\frac{2(1-s_c)}{s_c}\left(\frac{1}{2}\|\nabla u(t)\|_{L^2}^2+\frac{1}{2}\int_{\mathbb{R}^3}V(x)|u(t,x)|^2dx-\frac{1}{p+1}\|u(t)\|_{L^p+1}^{p+1}\right)\\
	&\geq\|u(t)\|_{L^2}^\frac{2(1-s_c)}{s_c}\left(\frac{1}{2}\|\mathcal{H}^\frac{1}{2}u(t)\|_{L^2}^2-\frac{1}{p+1}C_\text{GN}\|u(t)\|_{L^2}^\frac{5-p}{2}\|\nabla u(t)\|_{L^2}^\frac{3(p-1)}{2}\right)\\
	&\geq\frac{1}{2}\|u(t)\|_{L^2}^\frac{2(5-p)}{3p-7}\|\mathcal{H}^\frac{1}{2}u(t)\|_{L^2}^2-\frac{1}{p+1}C_\text{GN}\|u(t)\|_{L^2}^\frac{3(5-p)(p-1)}{2(3p-7)}\|\mathcal{H}^\frac{1}{2}u(t)\|_{L^2}^\frac{3(p-1)}{2}\\
	&=\frac{1}{2}\|u(t)\|_{L^2}^\frac{2(5-p)}{3p-7}\|\mathcal{H}^\frac{1}{2}u(t)\|_{L^2}^2-\frac{2}{3(p-1)}\cdot\frac{\|u(t)\|_{L^2}^\frac{3(5-p)(p-1)}{2(3p-7)}\|\mathcal{H}^\frac{1}{2}u(t)\|_{L^2}^\frac{3(p-1)}{2}}{\|Q\|_{L^2}^\frac{5-p}{2}\|\nabla Q\|_{L^2}^\frac{3p-7}{2}}
\end{align*}
and hence,
\begin{align*}
(1-\delta)^\frac{1}{s_c}\geq\frac{3(p-1)}{3p-7}\cdot\frac{\|u(t)\|_{L^2}^\frac{2(5-p)}{3p-7}\|\mathcal{H}^\frac{1}{2}u(t)\|_{L^2}^2}{\|Q\|_{L^2}^\frac{2(5-p)}{3p-7}\|\nabla Q\|_{L^2}^2}-\frac{4}{3p-7}\cdot\frac{\|u(t)\|_{L^2}^\frac{3(5-p)(p-1)}{2(3p-7)}\|\mathcal{H}^\frac{1}{2}u(t)\|_{L^2}^\frac{3(p-1)}{2}}{\|Q\|_{L^2}^\frac{3(5-p)(p-1)}{2(3p-7)}\|\nabla Q\|_{L^2}^\frac{3(p-1)}{2}}.
\end{align*}
Here, we consider a function $g(y)=\frac{3(p-1)}{3p-7}y^2-\frac{4}{3p-7}y^\frac{3(p-1)}{2}$. Then, $g'(y)=\frac{6(p-1)}{3p-7}y-\frac{6(p-1)}{3p-7}y^\frac{3p-5}{2}$. Solving $g'(y)=0$, we obtain $y=0,1$. We set $y_0=0$ and $y_1=1$. Then, $g$ has a local minimum at $y_0$ and a local maximum at $y_1$. Also, $g(y_1)=1$. Combining these facts and the assumption of Proposition \ref{Coercivity2}, there exists $\delta'=\delta'(\delta)>0$ such that
\begin{align*}
\|u(t)\|_{L^2}^\frac{5-p}{3p-7}\|\mathcal{H}^\frac{1}{2}u(t)\|_{L^2}>(1+\delta')^\frac{2(p-1)}{3p-7}\|Q\|_{L^2}^\frac{5-p}{3p-7}\|\nabla Q\|_{L^2},
\end{align*}
%that is,
%\begin{align*}
%\|u(t)\|_{L^2}^{1-s_c}\|\mathcal{H}^\frac{1}{2}u(t)\|_{L^2}^{s_c}>(1+\delta')\|Q\|_{L^2}^{1-s_c}\|\nabla Q\|_{L^2}^{s_c},
%\end{align*}
which completes the proof of Lemma \ref{Coercivity2} and implies the uniform estimate \eqref{056} in Theorem \ref{Scattering versus blowing-up or growing-up} \rm{(ii)}.
\end{proof}

\begin{lemma}\label{Lemma1 for blows-up or grows-up}
Let $1\leq p<5$. Let $V$ satisfy ``$V\in\mathcal{K}_0\cap L^\frac{3}{2}$ and $\|V_-\|_{\mathcal{K}}<4\pi$'' or $V\in L^\sigma$ for some $\sigma>\frac{3}{2}$. We assume that $u\in C([0,\infty);H^1)$ be a solution to (NLS$_V$) satisfying
\begin{align*}
C_0:=\sup_{t\in[0,\infty)}\|\nabla u\|_{L^2}<\infty.
\end{align*}
Then, there exists $C_1>0$ such that for any $\eta>0$, $R>0$, and $t\in\left[0,\frac{\eta R}{\|u\|_{L^2}C_0C_1}\right]$,
\begin{align*}
\int_{|x|>R}|u(t,x)|^2dx\leq o_R(1)+\eta.
\end{align*}
\end{lemma}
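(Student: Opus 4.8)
The plan is to control the mass that escapes to the exterior of a ball by a localized-mass argument: the first-order localized virial identity \eqref{022} involves neither the potential $V$ nor the nonlinearity, so the time derivative of any weighted mass is governed solely by the conserved mass and the uniform kinetic bound $C_0$. Concretely, I would take as weight the exterior cutoff $\omega:=1-\chi_R$, where $\chi_R$ is the function from \eqref{072}; then $\omega\in C^\infty(\mathbb{R}^3)$ is real-valued, $0\le\omega\le1$, $\omega\equiv1$ on $\{|x|\ge R\}$, $\omega\equiv0$ on $\{|x|\le R/2\}$, and $\nabla\omega=-\tfrac1R(\nabla\chi)(\cdot/R)$, so that $\|\nabla\omega\|_{L^\infty}=\tfrac1R\|\nabla\chi\|_{L^\infty}$. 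Define
\begin{align*}
M_R(t):=\int_{\mathbb{R}^3}(1-\chi_R(x))|u(t,x)|^2\,dx,
\end{align*}
which dominates the quantity of interest since $\int_{|x|>R}|u(t,x)|^2\,dx\le M_R(t)$.

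First I would apply Proposition \ref{Virial identity}, equation \eqref{022}, to obtain $M_R'(t)=2\,\mathrm{Im}\int_{\mathbb{R}^3}\overline{u}\,\nabla u\cdot\nabla\omega\,dx$, and then estimate by Cauchy--Schwarz, using that $\nabla\omega$ is supported in $\{R/2\le|x|\le R\}$:
\begin{align*}
|M_R'(t)|\le 2\|\nabla\omega\|_{L^\infty}\|u(t)\|_{L^2}\|\nabla u(t)\|_{L^2}\le\frac{C_1}{R}\,\|u\|_{L^2}\,C_0,
\end{align*}
where I set $C_1:=2\|\nabla\chi\|_{L^\infty}$ and use mass conservation $\|u(t)\|_{L^2}=\|u\|_{L^2}$ together with $\|\nabla u(t)\|_{L^2}\le C_0$. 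Integrating over $[0,t]$ gives
\begin{align*}
M_R(t)\le M_R(0)+\frac{C_1}{R}\,\|u\|_{L^2}\,C_0\,t.
\end{align*}

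For $t\in\bigl[0,\tfrac{\eta R}{\|u\|_{L^2}C_0C_1}\bigr]$ the second term is bounded by $\eta$, while the initial term satisfies $M_R(0)\le\int_{|x|\ge R/2}|u_0|^2\,dx=o_R(1)$ as $R\to\infty$ since $u_0\in L^2$. Combining these yields $\int_{|x|>R}|u(t,x)|^2\,dx\le M_R(t)\le o_R(1)+\eta$, as claimed. The argument is essentially routine; the only point requiring care is the bookkeeping that makes the prefactor in the derivative estimate exactly $C_1/R$ so that the admissible time range stated in the lemma produces precisely the bound $\eta$, and the observation that the hypotheses on $V$ enter only through guaranteeing that $u$ is a genuine $H^1$-solution for which Proposition \ref{Virial identity} is valid—the potential itself never appears in the estimate because it drops out of the first-order virial identity.
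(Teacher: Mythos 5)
Your proof is correct and follows essentially the same route as the paper: the paper introduces a cutoff $\Phi_R$ (vanishing for $|x|\leq R/2$, equal to $1$ for $|x|\geq R$, with $|\nabla\Phi|\leq C_1$) — the same shape as your $1-\chi_R$ — applies the first-order localized virial identity \eqref{022}, bounds the derivative by $\|u\|_{L^2}C_0C_1/R$ via Cauchy--Schwarz and the conservation laws, and concludes with $I(0)=o_R(1)$ exactly as you do. Your version is if anything slightly more careful about the factor of $2$ in the constant $C_1$ and about noting that the potential never enters because it drops out of \eqref{022}.
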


\begin{proof}
Let $\Phi_R$ be a radial function constructed by $\Phi_R(x)=\Phi\left(\frac{x}{R}\right)$ and
\begin{equation}
\notag \Phi(x)=
\begin{cases}
&\hspace{0cm}0\hspace{1.25cm}(0\leq|x|\leq\frac{1}{2}),\\
&\hspace{-0.4cm}\text{smooth}\hspace{0.6cm}(\frac{1}{2}\leq|x|\leq1),\\
&\hspace{0cm}1\hspace{1.25cm}(1\leq|x|).
\end{cases}
\end{equation}
We note that there exists $C_1>0$ such that $|\nabla\Phi|\leq C_1$. We define a function
\begin{align*}
I(t):=\int_{\mathbb{R}^3}\Phi_R(x)|u(t,x)|^2dx.
\end{align*}
Using Proposition \ref{Virial identity},
\begin{align*}
I(t)
	&=I(0)+\int_0^t\frac{d}{ds}I(s)ds\leq I(0)+\int_0^t|I'(s)|ds\\
	&\leq I(0)+t\|\nabla\Phi_R\|_{L^\infty}\sup_{t\in[0,\infty)}\|\nabla u(t)\|_{L^2}\|u\|_{L^2}\leq I(0)+\frac{\|u\|_{L^2}C_0C_1t}{R}
\end{align*}
for any $t\in[0,\infty)$. Since $u_0\in H^1$,
\begin{align*}
I(0)=\int_{\mathbb{R}^3}\Phi_R(x)|u_0(x)|^2dx\leq\int_{|x|>\frac{R}{2}}|u_0(x)|^2dx=o_R(1).
\end{align*}
Moreover, by
\begin{align*}
\int_{|x|>R}|u(t,x)|^2dx\leq I(t),
\end{align*}
we have
\begin{align*}
\int_{|x|>R}|u(t,x)|^2dx\leq o_R(1)+\eta.
\end{align*}
\end{proof}

\begin{lemma}\label{Lemma2 for blows-up or grows-up}
Let $1<p<5$, ``$V\in\mathcal{K}_0\cap L^\frac{3}{2}$ or $V\in L^\sigma$ for some $\sigma>\frac{3}{2}$'', $V\geq0$, $x\cdot\nabla V+2V\geq0$, and $x\cdot\nabla V\in L^\frac{3}{2}$. Let $u\in C([0,\infty);H^1)$ be a solution to (NLS$_V$). We consider $\Psi_R$ be a radial function constructed by $\Psi_R(x)=R^2\Psi(\frac{x}{R})$ and
\begin{equation}
\notag \Psi=
\begin{cases}
&\hspace{-0.08cm}|x|^2\hspace{1.03cm}(0\leq|x|\leq1),\\
&\hspace{-0.4cm}\text{smooth}\hspace{0.7cm}(1\leq|x|\leq3)\\
&\hspace{0.1cm}0\hspace{1.23cm}(3\leq|x|)
\end{cases}
\end{equation}
with $\Psi''(r)\leq2$. We define a function
\begin{align*}
I(t):=\int_{\mathbb{R}^3}\Psi_R(x)|u(t,x)|^2dx.
\end{align*}
Then, for $q>p+1$, there exist constants $C=C(q,\|u_0\|_{L^2},C_0)>0$ and $\theta_q>0$ such that for any $R>0$ and $t\in[0,\infty)$, the estimate
\begin{align*}
	&I''(t)\leq8\left\{\|\nabla u(t)\|_{L^2}^2-\frac{3(p-1)}{2(p+1)}\|u(t)\|_{L^{p+1}}^{p+1}+\int_{\mathbb{R}^3}V(x)|u(t,x)|^2dx\right\}\\
	&\hspace{3cm}+C\,\|u(t)\|_{L^2(R\leq|x|)}^{(p+1)\theta_q}+\frac{C}{R^2}\|u(t)\|_{L^2(R\leq|x|)}^2+C\|x\cdot\nabla V\|_{L^\frac{3}{2}(R\leq|x|\leq3R)}
\end{align*}
holds, where $\theta_q:=\frac{2\{q-(p+1)\}}{(p+1)(q-2)}\in(0,\frac{2}{p+1}]$ and $C_0$ is given in Lemma \ref{Lemma1 for blows-up or grows-up}.
\end{lemma}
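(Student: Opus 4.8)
The plan is to apply the radial localized virial identity \eqref{025} of Proposition \ref{Virial identity} with the weight $\omega=\Psi_R$ and then estimate each of the resulting five integrals after splitting $\mathbb{R}^3$ into the three regions $\{|x|\le R\}$, $\{R\le|x|\le 3R\}$, and $\{|x|\ge 3R\}$ dictated by the definition of $\Psi$. On $\{|x|\ge 3R\}$ the weight vanishes identically, so there is nothing to estimate. On $\{|x|\le R\}$ one has $\Psi_R(x)=|x|^2$, whence $\Psi_R'=2r$, $\Psi_R''=2$, so that $\frac{\Psi_R''}{r^2}-\frac{\Psi_R'}{r^3}=0$, $\frac{\Psi_R'}{r}=2$, $\Psi_R''+\frac{2}{r}\Psi_R'=6$, and $\Psi_R^{(4)}+\frac{4}{r}\Psi_R^{(3)}=0$; substituting these constants produces exactly the interior pieces $8\int_{|x|\le R}|\nabla u|^2$, $-\frac{12(p-1)}{p+1}\int_{|x|\le R}|u|^{p+1}$, and $-4\int_{|x|\le R}(x\cdot\nabla V)|u|^2$.

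First I would treat the two kinetic terms globally. Writing $a:=\frac{\Psi_R'}{r}=\frac{\Psi'(r/R)}{r/R}$ and $b:=\Psi_R''=\Psi''(r/R)$, the two kinetic integrands combine pointwise into $4\frac{b-a}{r^2}|x\cdot\nabla u|^2+4a|\nabla u|^2$, which is linear in $t:=|x\cdot\nabla u|^2/r^2\in[0,|\nabla u|^2]$ (using $|x\cdot\nabla u|\le r|\nabla u|$), hence bounded by $4\max(a,b)|\nabla u|^2$. Since $\Psi''\le 2$ gives $b\le 2$, and integrating $\Psi''\le 2$ from the matching value $\Psi'(1)=2$ gives $\Psi'(s)\le 2s$, hence $a\le 2$, the total kinetic contribution is at most $8\int_{\mathbb{R}^3}|\nabla u|^2=8\|\nabla u\|_{L^2}^2$. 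For the potential term I would invoke the hypotheses $x\cdot\nabla V+2V\ge0$ and $V\ge0$: on $\{|x|\le R\}$ these yield $-4\int_{|x|\le R}(x\cdot\nabla V)|u|^2\le 8\int_{|x|\le R}V|u|^2\le 8\int_{\mathbb{R}^3}V|u|^2$, which supplies the $+8\int V|u|^2$ contribution to the main bracket; on $\{R\le|x|\le 3R\}$, where $|\Psi_R'/r|\le 2$, Hölder's inequality together with the Sobolev bound $\|u\|_{L^6}^2\lesssim\|\nabla u\|_{L^2}^2\le C_0^2$ gives the error term $C\|x\cdot\nabla V\|_{L^{3/2}(R\le|x|\le 3R)}$.

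It remains to convert the interior nonlinear and biharmonic pieces into global quantities plus errors. For the nonlinearity I would write $-\frac{12(p-1)}{p+1}\int_{|x|\le R}|u|^{p+1}=-\frac{12(p-1)}{p+1}\|u\|_{L^{p+1}}^{p+1}+\frac{12(p-1)}{p+1}\int_{|x|>R}|u|^{p+1}$ and absorb the transition piece $-\frac{2(p-1)}{p+1}\int_{R\le|x|\le 3R}(\Delta\Psi_R)|u|^{p+1}$, which is $\lesssim\int_{|x|>R}|u|^{p+1}$ because $\Delta\Psi_R=\Psi_R''+\frac{2}{r}\Psi_R'$ is bounded there. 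To control $\int_{|x|>R}|u|^{p+1}$ I would interpolate $L^{p+1}$ between $L^2$ and $L^q$ for some $p+1<q\le 6$, then use the embedding $H^1\hookrightarrow L^q$ and the uniform bound $\|u\|_{H^1}\lesssim\|u_0\|_{L^2}+C_0$, obtaining $\int_{|x|>R}|u|^{p+1}\lesssim\|u\|_{L^2(R\le|x|)}^{(p+1)\theta_q}$ with exactly $\theta_q=\frac{2\{q-(p+1)\}}{(p+1)(q-2)}$. Finally the biharmonic term $-\int(\Psi_R^{(4)}+\frac{4}{r}\Psi_R^{(3)})|u|^2=-\int(\Delta^2\Psi_R)|u|^2$ vanishes on $\{|x|\le R\}$ and obeys $|\Delta^2\Psi_R|\lesssim R^{-2}$ on the transition region, giving $\frac{C}{R^2}\|u\|_{L^2(R\le|x|)}^2$. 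Collecting the four regional estimates reproduces the claimed inequality. The main obstacle I anticipate is the global kinetic estimate: one must verify that the transition weight stays within the range $\Psi''\le 2$ and $\Psi'/r\le 2$, so that the indefinite-sign cross term $\frac{b-a}{r^2}|x\cdot\nabla u|^2$ can never push the kinetic density above $8|\nabla u|^2$; the remaining pieces are routine interpolation and Hölder bookkeeping.
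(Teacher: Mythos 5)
Your proposal is correct and follows essentially the same route as the paper's proof: the paper likewise applies the radial localized virial identity \eqref{025} with the weight $\Psi_R$, bounds the two kinetic terms by $8\|\nabla u(t)\|_{L^2}^2$ using $\Psi''\le 2$ and $\Psi'(s)\le 2s$ (its sign case analysis on $\tfrac{\Psi''}{r^2}-\tfrac{R\Psi'}{r^3}$ is your linearity-in-$|x\cdot\nabla u|^2/r^2$ argument), uses $x\cdot\nabla V+2V\ge0$ and $V\ge0$ to produce the $+8\int V|u|^2$ term plus the $\|x\cdot\nabla V\|_{L^{3/2}(R\le|x|\le3R)}$ error, controls the exterior nonlinear contribution by exactly your $L^2$--$L^q$ interpolation with $\theta_q$ and the uniform bound $\sup_t\|u(t)\|_{L^q}\lesssim \|u_0\|_{L^2}^{\frac{6-q}{2q}}C_0^{\frac{3(q-2)}{2q}}$, and treats the biharmonic piece the same way. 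The only slip is cosmetic: on $R\le|x|\le 3R$ you claim $|\Psi_R'/r|\le2$, whereas the hypotheses give only the one-sided bound $\Psi_R'/r\le2$ (indeed $\Psi'$ must be negative somewhere there, since $\Psi$ decreases from $1$ to $0$); but $|\Psi_R'/r|$ is still bounded by a constant of the fixed profile $\Psi$, which is all your H\"older step requires, so the conclusion is unaffected.
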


\begin{proof}
Using Proposition \ref{Virial identity}, we have
\begin{align*}
I''(t)=8\|\nabla u(t)\|_{L^2}^2-\frac{12(p-1)}{p+1}\|u(t)\|_{L^{p+1}}^{p+1}+8\int_{\mathbb{R}^3}V(x)|u(t,x)|^2dx+\mathcal{R}_1+\mathcal{R}_2+\mathcal{R}_3+\mathcal{R}_4,
\end{align*}
where $\mathcal{R}_k=\mathcal{R}_k(t)$ $(k=1,2,3,4)$ are defined by
\begin{align*}
\mathcal{R}_1
	&:=4\int_{\mathbb{R}^3}\left\{\frac{1}{r^2}\Psi''\left(\frac{r}{R}\right)-\frac{R}{r^3}\Psi'\left(\frac{r}{R}\right)\right\}|x\cdot\nabla u|^2dx+4\int_{\mathbb{R}^3}\left\{\frac{R}{r}\Psi'\left(\frac{r}{R}\right)-2\right\}|\nabla u(t,x)|^2dx,\\
\mathcal{R}_2
	&:=-\frac{2(p-1)}{p+1}\int_{\mathbb{R}^3}\left\{\Psi''\left(\frac{r}{R}\right)+\frac{2R}{r}\Psi'\left(\frac{r}{R}\right)-6\right\}|u(t,x)|^{p+1}dx,\\
\mathcal{R}_3
	&:=-\int_{\mathbb{R}^3}\left\{\frac{1}{R^2}\Psi^{(4)}\left(\frac{r}{R}\right)+\frac{4}{Rr}\Psi^{(3)}\left(\frac{r}{R}\right)\right\}|u(t,x)|^2dx,\\
\mathcal{R}_4
	&:=-8\int_{\mathbb{R}^3}V(x)|u(t,x)|^2dx-2\int_{\mathbb{R}^3}\frac{R}{r}\Psi'\left(\frac{r}{R}\right)(x\cdot\nabla V)|u(t,x)|^2dx.
\end{align*}
If $\frac{1}{r^2}\Psi''\left(\frac{r}{R}\right)-\frac{R}{r^3}\Psi'\left(\frac{r}{R}\right)\leq0$, then we have $\mathcal{R}_1\leq0$ by $\Psi'(\frac{r}{R})\leq\frac{2r}{R}$. If $\frac{1}{r^2}\Psi''\left(\frac{r}{R}\right)-\frac{R}{r^3}\Psi'\left(\frac{r}{R}\right)\geq0$, then
\begin{align*}
\mathcal{R}_1&\leq4\int_{\mathbb{R}^3}\left\{\Psi''\left(\frac{r}{R}\right)-\frac{R}{r}\Psi'\left(\frac{r}{R}\right)\right\}|\nabla u(t,x)|^2dx+4\int_{\mathbb{R}^3}\left\{\frac{R}{r}\Psi'\left(\frac{r}{R}\right)-2\right\}|\nabla u(t,x)|^2dx\\
&=4\int_{\mathbb{R}^3}\left\{\Psi''\left(\frac{r}{R}\right)-2\right\}|\nabla u(t,x)|^2dx\leq0.
\end{align*}
Next, we estimate $\mathcal{R}_2$. We note that since $q\geq2$, the Gagliardo-Nirenberg's inequality can be applied to get
\begin{align}
\|f\|_{L^q}\leq C\|f\|_{L^2}^{\frac{6-q}{2q}}\|\nabla f\|_{L^2}^{\frac{3(q-2)}{2q}}\label{077}
\end{align}
for any $f\in H^1(\mathbb{R}^3)$, where $C$ depends only on $q$. Thus due to $1<p$, for any $q\in[p+1,6]$, by the mass conservation law and the estimate \eqref{077}, we have
\begin{align*}
\sup_{t\in[0,\infty)}\|u(t)\|_{L^q}\leq C\sup_{t\in[0,\infty)}\|u(t)\|_{L^2}^\frac{6-q}{2q}\|\nabla u(t)\|_{L^2}^\frac{3(q-2)}{2q}\leq c\|u_0\|_{L^2}^\frac{6-q}{2q}C_0^\frac{3(q-2)}{2q}=:C_3.
\end{align*}
By this inequality and H\"older's inequality,
\begin{align*}
\mathcal{R}_2
%	&=-\frac{2(p-1)}{p+1}\int_{R\leq|x|}\left\{\Psi''\left(\frac{r}{R}\right)+\frac{2R}{r}\Psi'\left(\frac{r}{R}\right)-6\right\}|u(t,x)|^{p+1}dx\\
	&=-\frac{2(p-1)}{p+1}\int_{R\leq|x|\leq3R}\left\{\Psi''\left(\frac{r}{R}\right)+\frac{2R}{r}\Psi'\left(\frac{r}{R}\right)\right\}|u(t,x)|^{p+1}dx+\frac{12(p-1)}{p+1}\int_{R\leq|x|}|u(t,x)|^{p+1}dx\\
	&\leq C\|u(t)\|_{L^{p+1}(R\leq|x|)}^{p+1}\\
	&\leq C\|u(t)\|_{L^q(R\leq|x|)}^{(p+1)(1-\theta_q)}\|u(t)\|_{L^2(R\leq|x|)}^{(p+1)\theta_q}\\
	&\leq C\,C_3^{(p+1)(1-\theta_q)}\|u(t)\|_{L^2(R\leq|x|)}^{(p+1)\theta_q}.
\end{align*}
Moreover, we estimate $\mathcal{R}_3$.
\begin{align*}
\mathcal{R}_3=-\int_{R\leq|x|\leq3R}\left\{\frac{1}{R^2}\Psi^{(4)}\left(\frac{r}{R}\right)+\frac{4}{Rr}\Psi^{(3)}\left(\frac{r}{R}\right)\right\}|u(t,x)|^2dx\leq\frac{C}{R^2}\|u(t)\|_{L^2(R\leq|x|)}^2.
\end{align*}
Finally, we estimate $\mathcal{R}_4$. By $V\geq0$ and $x\cdot\nabla V+2V\geq0$, we have
\begin{align*}
\mathcal{R}_4
	&=-8\int_{\mathbb{R}^3}V(x)|u(t,x)|^2dx-2\int_{\mathbb{R}^3}\frac{R}{r}\Psi'\left(\frac{r}{R}\right)(x\cdot\nabla V)|u(t,x)|^2dx\\
	&\leq-4\int_{|x|\leq R}(x\cdot\nabla V+2V)|u(t,x)|^2dx-8\int_{R\leq|x|}V(x)|u(t,x)|^2dx\\
	&\hspace{6.0cm}-2\int_{R\leq|x|\leq3R}\frac{R}{r}\Psi'\left(\frac{r}{R}\right)(x\cdot\nabla V)|u(t,x)|^2dx\\
	&\leq-2\int_{R\leq|x|\leq3R}\frac{R}{r}\Psi'\left(\frac{r}{R}\right)(x\cdot\nabla V)|u(t,x)|^2dx\\
	&\leq C\|x\cdot\nabla V\|_{L^\frac{3}{2}(R\leq|x|\leq3R)}\|u(t)\|_{L^6(R\leq|x|\leq3R)}\\
	&\leq C\|x\cdot\nabla V\|_{L^\frac{3}{2}(R\leq|x|\leq3R)}\|\nabla u(t)\|_{L^2}\\
	&\leq C\|x\cdot\nabla V\|_{L^\frac{3}{2}(R\leq|x|\leq3R)},
\end{align*}
which completes the proof of the lemma.
\end{proof}

\begin{lemma}\label{Lemma3 for blows-up or grows-up}
Let $\frac{7}{3}<p<5$. Let $V$ satisfy ``$V\in\mathcal{K}_0\cap L^\frac{3}{2}$ and $\|V_-\|_{\mathcal{K}}<4\pi$'' or ``$V\in L^\sigma$ for some $\sigma>\frac{3}{2}$''. We assume that $x\cdot\nabla V+2V\geq0$, $u_0$ satisfies \eqref{008} and \eqref{056}. Then, there exists $\delta>0$ such that
\begin{align*}
\|\nabla u(t)\|_{L^2}^2-\frac{3(p-1)}{2(p+1)}\|u(t)\|_{L^{p+1}}^{p+1}
	&-\frac{1}{2}\int_{\mathbb{R}^3}x\cdot\nabla V(x)|u(t,x)|^2dx\\
	&\leq\|\nabla u(t)\|_{L^2}^2-\frac{3(p-1)}{2(p+1)}\|u(t)\|_{L^{p+1}}^{p+1}+\int_{\mathbb{R}^3}V(x)|u(t,x)|^2dx<-\delta.
\end{align*}
for any $t\in(T_\text{min},T_\text{max})$.
\end{lemma}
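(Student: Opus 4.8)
The plan is to establish the two inequalities separately. The first (the $\le$ between the left and middle functionals) is purely structural: subtracting the left-hand expression from the middle one leaves exactly
\[
\int_{\mathbb{R}^3}V(x)|u(t,x)|^2\,dx+\frac12\int_{\mathbb{R}^3}(x\cdot\nabla V(x))|u(t,x)|^2\,dx=\frac12\int_{\mathbb{R}^3}\bigl(2V(x)+x\cdot\nabla V(x)\bigr)|u(t,x)|^2\,dx,
\]
which is nonnegative by the hypothesis $x\cdot\nabla V+2V\ge0$. Hence the middle functional dominates the left one, and it suffices to bound the middle functional above by $-\delta$ uniformly in $t$.

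For the second inequality I would first recognize, via $\|\mathcal{H}^{\frac12}u\|_{L^2}^2=\|\nabla u\|_{L^2}^2+\int_{\mathbb{R}^3}V|u|^2\,dx$, that the middle functional equals
\[
K(t):=\|\mathcal{H}^{\frac12}u(t)\|_{L^2}^2-\frac{3(p-1)}{2(p+1)}\|u(t)\|_{L^{p+1}}^{p+1}.
\]
The nonlinear term is then eliminated by energy conservation: writing $E_V[u(t)]=\tfrac12\|\mathcal{H}^{\frac12}u(t)\|_{L^2}^2-\tfrac1{p+1}\|u(t)\|_{L^{p+1}}^{p+1}=E_V[u_0]$ and substituting for $\|u(t)\|_{L^{p+1}}^{p+1}$ yields the identity
\[
K(t)=\frac{7-3p}{4}\,\|\mathcal{H}^{\frac12}u(t)\|_{L^2}^2+\frac{3(p-1)}{2}\,E_V[u_0].
\]
Since $\tfrac73<p<5$, the coefficient $\tfrac{7-3p}{4}$ is strictly negative, so $K(t)$ is forced below $-\delta$ as soon as $\|\mathcal{H}^{\frac12}u(t)\|_{L^2}^2$ admits a sufficiently large uniform lower bound.

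That bound is supplied by Lemma \ref{Coercivity2}. Since \eqref{008} is strict I may fix $\delta>0$ with $M[u_0]^{1-s_c}E_V[u_0]^{s_c}<(1-\delta)M[Q]^{1-s_c}E_0[Q]^{s_c}$, so Lemma \ref{Coercivity2} (with \eqref{056}) furnishes $\delta'>0$ such that $\|u(t)\|_{L^2}^{1-s_c}\|\mathcal{H}^{\frac12}u(t)\|_{L^2}^{s_c}>(1+\delta')\|Q\|_{L^2}^{1-s_c}\|\nabla Q\|_{L^2}^{s_c}$ for all $t$. Because $\|u(t)\|_{L^2}^2=M[u_0]$ is conserved, raising this to the power $2/s_c$ gives the \emph{time-independent} lower bound
\[
\|\mathcal{H}^{\frac12}u(t)\|_{L^2}^2>(1+\delta')^{2/s_c}\,\frac{\|Q\|_{L^2}^{2(1-s_c)/s_c}\|\nabla Q\|_{L^2}^2}{M[u_0]^{(1-s_c)/s_c}}=:H_*.
\]
Finally I would combine this with \eqref{008} and the Pohozaev identity $E_0[Q]=\tfrac{3p-7}{6(p-1)}\|\nabla Q\|_{L^2}^2$ from \eqref{021}: the latter gives $\tfrac{3p-7}{6(p-1)}H_*=(1+\delta')^{2/s_c}M[u_0]^{-(1-s_c)/s_c}\|Q\|_{L^2}^{2(1-s_c)/s_c}E_0[Q]$, whereas \eqref{008} gives $E_V[u_0]<M[u_0]^{-(1-s_c)/s_c}\|Q\|_{L^2}^{2(1-s_c)/s_c}E_0[Q]$. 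As $(1+\delta')^{2/s_c}\ge1$ and $E_0[Q]>0$, this yields $\tfrac{3(p-1)}{2}E_V[u_0]<\tfrac{3p-7}{4}H_*$, i.e. $\tfrac{7-3p}{4}H_*+\tfrac{3(p-1)}{2}E_V[u_0]<0$. Declaring $-\delta$ to be this fixed negative constant and using $\|\mathcal{H}^{\frac12}u(t)\|_{L^2}^2>H_*$ together with $\tfrac{7-3p}{4}<0$ in the identity for $K(t)$ gives $K(t)<-\delta$ for every $t\in(T_{\text{min}},T_{\text{max}})$.

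The computations are elementary once the identity for $K(t)$ is in place; the only points deserving care are that the bound be independent of $t$ — which is exactly why the argument is routed through the conserved quantities $M[u_0]$, $E_V[u_0]$ and the time-uniform coercivity of Lemma \ref{Coercivity2} — and that the margin $\delta>0$ be genuinely positive, this being secured jointly by the strict gap in \eqref{008} and the factor $(1+\delta')^{2/s_c}>1$ from the coercivity lemma.
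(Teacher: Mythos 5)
Your proposal is correct and takes essentially the same route as the paper: the same pointwise use of $2V+x\cdot\nabla V\geq0$ for the first inequality, the same energy-conservation identity rewriting the middle functional as $\frac{3(p-1)}{2}E_V[u_0]-\frac{3p-7}{4}\|\mathcal{H}^{\frac12}u(t)\|_{L^2}^2$, the same time-uniform lower bound on $\|\mathcal{H}^{\frac12}u(t)\|_{L^2}^2$ coming from Lemma \ref{Coercivity2} together with mass conservation, and the Pohozaev identity \eqref{021} to close. The only cosmetic difference is bookkeeping: the paper encodes the strict gap in \eqref{008} as the quantity $\varepsilon_1$ and uses \eqref{066} without the $(1+\delta')$ improvement, whereas you carry the factor $(1+\delta')^{2/s_c}\geq1$ and extract strictness from \eqref{008} directly.
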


\begin{proof}
The left inequality holds since
\begin{align*}
-\frac{1}{2}\int_{\mathbb{R}^3}x\cdot\nabla V(x)|u(t,x)|^2dx
	&=-\frac{1}{2}\int_{\mathbb{R}^3}(x\cdot\nabla V+2V)|u(t,x)|^2dx+\int_{\mathbb{R}^3}V(x)|u(t,x)|^2dx\\
	&\leq\int_{\mathbb{R}^3}V(x)|u(t,x)|^2dx.%\ \ \ (\because\ x\cdot\nabla V+2V\geq0)
\end{align*}
For the second inequality,
\begin{align}
	\|\nabla u(t)\|_{L^2}^2-\frac{3(p-1)}{2(p+1)}\|u(t)\|_{L^{p+1}}^{p+1}+\int_{\mathbb{R}^3}V(x)|u(t,x)|^2dx
%	&\hspace{3.0cm}=\frac{3(p-1)}{2}E[u]-\frac{3p-7}{4}\|\nabla u\|_{L^2}^2-\frac{3p-7}{4}\int_{\mathbb{R}^3}V(x)|u(t,x)|^2dx \notag \\
	=\frac{3(p-1)}{2}E_V[u_0]-\frac{3p-7}{4}\|\mathcal{H}^\frac{1}{2}u(t)\|_{L^2}^2.\label{065}
\end{align}
By the assumption \eqref{008},
\begin{align*}
\varepsilon_1:=\frac{1}{2}\left\{\left(\frac{M[Q]}{M[u_0]}\right)^{\frac{1-s_c}{s_c}}E_0[Q]-E_V[u_0]\right\}>0
\end{align*}
and
\begin{align*}
E_V[u_0]
%=\frac{1}{2}E_V[u_0]+\frac{1}{2}E_V[u_0]
<\frac{1}{2}E_V[u_0]+\frac{1}{2}\left(\frac{M[Q]}{M[u_0]}\right)^\frac{1-s_c}{s_c}E_0[Q]=\left(\frac{M[Q]}{M[u_0]}\right)^\frac{1-s_c}{s_c}E_0[Q]-\varepsilon_1.
\end{align*}
Moreover by the estimate \eqref{056}, we have
\begin{align}
\|\mathcal{H}^\frac{1}{2}u(t)\|_{L^2}^2>\left(\frac{M[Q]}{M[u_0]}\right)^\frac{1-s_c}{s_c}\|\nabla Q\|_{L^2}^2\label{066}
\end{align}
for any $t\in(T_\text{min},T_\text{max})$. Therefore, 
\eqref{021}, \eqref{065}, and \eqref{066} give
\begin{align*}
	&\|\nabla u(t)\|_{L^2}^2-\frac{3(p-1)}{2(p+1)}\|u(t)\|_{L^{p+1}}^{p+1}+\int_{\mathbb{R}^3}V(x)|u(t,x)|^2dx\\
	&\hspace{3.0cm}<\frac{3(p-1)}{2}\left\{\left(\frac{M[Q]}{M[u_0]}\right)^\frac{1-s_c}{s_c}E_0[Q]-\varepsilon_1\right\}-\frac{3p-7}{4}\left(\frac{M[Q]}{M[u_0]}\right)^\frac{1-s_c}{s_c}\|\nabla Q\|_{L^2}^2\\
	&\hspace{3.0cm}=-\frac{3(p-1)}{2}\varepsilon_1=:-\delta.
\end{align*}
\end{proof}

\begin{proof}[Proof of blows-up or grows-up result in Theorem \ref{Scattering versus blowing-up or growing-up}]
We assume that
\begin{align*}
T_\text{max}=\infty\ \ \text{ and }\ \ \sup_{t\in[0,\infty)}\|\nabla u(t)\|_{L^2}<\infty
\end{align*}
for contradiction. By Lemma \ref{Lemma3 for blows-up or grows-up}, there exists $\delta>0$ such that
\begin{align*}
\|\nabla u(t)\|_{L^2}^2-\frac{3(p-1)}{2(p+1)}\|u(t)\|_{L^{p+1}}^{p+1}+\int_{\mathbb{R}^3}V(x)|u(t,x)|^2dx<-\delta
\end{align*}
for any $t\in[0,\infty)$. We consider the function $I(t)$ as Lemma \ref{Lemma2 for blows-up or grows-up}. From Lemma \ref{Lemma2 for blows-up or grows-up} and Lemma \ref{Lemma1 for blows-up or grows-up}, we have
\begin{align}
I''(s)
	&\leq-8\delta+C\,\|u(s)\|_{L^2(R\leq|x|)}^{(p+1)\theta_q}+\frac{C}{R^2}\|u(s)\|_{L^2(R\leq|x|)}^2+C\|x\cdot V\|_{L^\frac{3}{2}(R\leq|x|\leq3R)} \notag \\
	&\leq-8\delta+o_R(1)+C\eta^\frac{(p+1)\theta_q}{2}+\frac{C}{R^2}M[u_0]+C\|x\cdot V\|_{L^\frac{3}{2}(R\leq|x|\leq3R)} \notag \\
	&=-8\delta+C\eta^\frac{(p+1)\theta_q}{2}+o_R(1)\label{067}
\end{align}
for any $\eta>0$, $R>0$, and $s\in\left[0,\frac{\eta R}{\|u_0\|_{L^2}C_0C_1}\right]$. We take $\eta=\eta_0>0$ sufficiently small such as
\begin{align*}
C\eta_0^\frac{(p+1)\theta_q}{2}\leq2\delta.
\end{align*}
Then, \eqref{067} implies
\begin{align}
I''(s)\leq-6\delta+o_R(1)\label{068}
\end{align}
for any $R>0$ and $s\in\left[0,\frac{\eta_0R}{\|u_0\|_{L^2}C_0C_1}\right]$. We set
\begin{align*}
T=T(R):=\alpha_0R:=\frac{\eta_0R}{\|u_0\|_{L^2}C_0C_1}.
\end{align*}
Integrating \eqref{068} over $s\in[0,t]$ and integrating over $t\in[0,T]$, we have
\begin{align}
I(T)\leq I(0)+I'(0)T+\frac{1}{2}\left(-6\delta+o_R(1)\right)T^2=I(0)+I'(0)\alpha_0R+\frac{1}{2}\left(-6\delta+o_R(1)\right)\alpha_0^2R^2.\label{070}
\end{align}
Here, we can prove
\begin{align}
I(0)=o_R(1)R^2\ \ \text{ and }\ \ I'(0)=o_R(1)R. \label{071}
\end{align}
Indeed,
\begin{align*}
I(0)
	&=\int_{|x|\leq3R}\Psi_R(x)|u_0(x)|^2dx=\int_{|x|\leq\sqrt{R}}|x|^2|u_0(x)|^2dx+\int_{\sqrt{R}\leq|x|\leq3R}R^2\Psi\left(\frac{r}{R}\right)|u_0(x)|^2dx\\
	&\leq RM[u_0]+cR^2\int_{\sqrt{R}\leq|x|}|u_0(x)|^2dx=o_R(1)R^2,
\end{align*}
and
\begin{align*}
I'(0)
	&=2\text{Im}\int_{\mathbb{R}^3}R\Psi'\left(\frac{r}{R}\right)\frac{x\cdot\nabla u_0}{r}\overline{u_0}dx\\
	&=4\text{Im}\int_{|x|\leq\sqrt{R}}x\cdot\nabla u_0\overline{u_0}dx+2\text{Im}\int_{\sqrt{R}\leq|x|\leq3R}R\Psi'\left(\frac{r}{R}\right)\frac{x\cdot\nabla u_0}{r}\overline{u_0}dx\\
	&\leq4\sqrt{R}\,\|\nabla u_0\|_{L^2}\|u_0\|_{L^2}+cR\|\nabla u_0\|_{L^2}\|u_0\|_{L^2(\sqrt{R}\leq|x|)}=o_R(1)R.
\end{align*}
Combining \eqref{070} and \eqref{071}, we get
\begin{align*}
I(T)\leq (o_R(1)-3\delta\alpha_0^2)R^2.
\end{align*}
We take $R>0$ sufficiently large such as $o_R(1)-3\delta\alpha_0^2<0$. However, this is contradiction to
\begin{align*}
I(T)=\int_{\mathbb{R}^3}\Psi_R(x)|u(T,x)|^2dx\geq0.
\end{align*}
\end{proof}

\section{Proof of blowing-up part in Theorem \ref{Scattering versus blowing-up or growing-up}}

Finally, we prove the blowing-up part of Theorem \ref{Scattering versus blowing-up or growing-up} in this section.

\begin{proof}
We assume that $T_\text{max}=\infty$.\\
Let $xu_0\in L^2$.\\
Then, From Proposition \ref{Finite variance} and Lemma \ref{Lemma3 for blows-up or grows-up}, it follows that
\begin{align*}
\frac{d^2}{dt^2}\int_{\mathbb{R}^3}|x|^2|u(t,x)|^2dx<-8\delta<0.
\end{align*}
This is contradiction to
\begin{align*}
\int_{\mathbb{R}^3}|x|^2|u(t,x)|^2dx\geq0.
\end{align*}
Therefore, the solution $u$ blows-up.\\
Let $V$ and $u_0$ be radially symmetric.\\
We define radial functions
\begin{equation}
\notag F(r)=
\begin{cases}
&\hspace{-0.13cm}\displaystyle{\frac{1}{2}r^2\hspace{0.82cm}(r\leq1)},\\
&\hspace{-0.4cm}\displaystyle{\text{smooth}\hspace{0.5cm}(1<r<2)},\\
&\hspace{-0.05cm}\displaystyle{\frac{3}{2}r\hspace{0.91cm}(2\leq r)}.
\end{cases}
\end{equation}
satisfying $1-F''\geq0$ and $F_R(r)=R^2F\left(\frac{r}{R}\right)$. Also, we note that
\begin{align}
\left|-\frac{2(p-1)}{p+1}\left(-3+F''\left(\frac{r}{R}\right)+\frac{2R}{r}F'\left(\frac{r}{R}\right)\right)\right|\lesssim4\left(1-F''\left(\frac{r}{R}\right)\right)\label{069}
\end{align}
for any $r\geq0$. Using \eqref{025} in Proposition \ref{Virial identity},
\begin{align*}
	&2\text{Im}\frac{d}{dt}\int_{\mathbb{R}^3}\nabla F_R\cdot\nabla u\overline{u}dx\\
	&=4\int_{\mathbb{R}^3}F''\left(\frac{r}{R}\right)|\nabla u|^2dx-\frac{2(p-1)}{p+1}\int_{\mathbb{R}^3}\left\{F''\left(\frac{r}{R}\right)+\frac{2R}{r}F'\left(\frac{r}{R}\right)\right\}|u|^{p+1}dx\\
	&\hspace{0.5cm}-\int_{\mathbb{R}^3}\left\{\frac{1}{R^2}F^{(4)}\left(\frac{r}{R}\right)+\frac{4}{Rr}F^{(3)}\left(\frac{r}{R}\right)\right\}|u|^2dx-2\int_{\mathbb{R}^3}\frac{R}{r}F'\left(\frac{r}{R}\right)(x\cdot\nabla V)|u|^2dx\\
	&=4\int_{\mathbb{R}^3}\left(|\nabla u|^2-\frac{1}{2}x\cdot\nabla V|u|^2-\frac{3(p-1)}{2(p+1)}|u|^{p+1}\right)dx-\int_{\mathbb{R}^3}\left\{4\left(1-F''\left(\frac{r}{R}\right)\right)|u'|^2\right.\\
	&\hspace{0.5cm}\left.+\left(\frac{1}{R^2}F^{(4)}\left(\frac{r}{R}\right)+\frac{4}{Rr}F^{(3)}\left(\frac{r}{R}\right)\right)|u|^2+\frac{2(p-1)}{p+1}\left(-3+F''\left(\frac{r}{R}\right)+\frac{2R}{r}F'\left(\frac{r}{R}\right)\right)|u|^{p+1}\right\}dx\\
	&\hspace{0.5cm}+2\int_{\mathbb{R}^3}\left\{1-\frac{R}{r}F'\left(\frac{r}{R}\right)\right\}(x\cdot\nabla V)|u|^2dx\\
	&=4\int_{\mathbb{R}^3}\left(|\nabla u|^2-\frac{1}{2}x\cdot\nabla V|u|^2-\frac{3(p-1)}{2(p+1)}|u|^{p+1}\right)dx+2\int_{R\leq|x|}\left\{1-\frac{R}{r}F'\left(\frac{r}{R}\right)\right\}(x\cdot\nabla V)|u|^2dx\\
	&\hspace{0.5cm}-\int_{R\leq|x|}\left\{4\left(1-F''\left(\frac{r}{R}\right)\right)|u'|^2+\frac{2(p-1)}{p+1}\left(-3+F''\left(\frac{r}{R}\right)+\frac{2R}{r}F'\left(\frac{r}{R}\right)\right)|u|^{p+1}\right\}dx\\
	&\hspace{0.5cm}-\int_{R\leq|x|\leq 2R}\left(\frac{1}{R^2}F^{(4)}\left(\frac{r}{R}\right)+\frac{4}{Rr}F^{(3)}\left(\frac{r}{R}\right)\right)|u|^2dx.
\end{align*}
From Lemma \ref{Lemma3 for blows-up or grows-up},
\begin{align}
	&2\text{Im}\frac{d}{dt}\int_{\mathbb{R}^3}\nabla F_R\cdot\nabla u\overline{u}dx \notag \\
	&\leq4\int_{\mathbb{R}^3}\left(|\nabla u|^2+V|u|^2-\frac{3(p-1)}{2(p+1)}|u|^{p+1}\right)dx+2\int_{R\leq|x|}\left\{1-\frac{R}{r}F'\left(\frac{r}{R}\right)\right\}(x\cdot\nabla V)|u|^2dx \notag \\
	&\hspace{0.5cm}-\int_{R\leq|x|}\left\{4\left(1-F''\left(\frac{r}{R}\right)\right)|u'|^2+\frac{2(p-1)}{p+1}\left(-3+F''\left(\frac{r}{R}\right)+\frac{2R}{r}F'\left(\frac{r}{R}\right)\right)|u|^{p+1}\right\}dx \notag \\
	&\hspace{0.5cm}-\int_{R\leq|x|\leq 2R}\left(\frac{1}{R^2}F^{(4)}\left(\frac{r}{R}\right)+\frac{4}{Rr}F^{(3)}\left(\frac{r}{R}\right)\right)|u|^2dx \label{076} \\
	&<-4\delta+\frac{C}{R^2}M[u_0]+2\int_{R\leq|x|}\left\{1-\frac{R}{r}F'\left(\frac{r}{R}\right)\right\}(x\cdot\nabla V)|u|^2dx \notag \\
	&\hspace{0.5cm}-\int_{R\leq|x|}\left\{4\left(1-F''\left(\frac{r}{R}\right)\right)|u'|^2+\frac{2(p-1)}{p+1}\left(-3+F''\left(\frac{r}{R}\right)+\frac{2R}{r}F'\left(\frac{r}{R}\right)\right)|u|^{p+1}\right\}dx. \notag
\end{align}
In the following, we estimate the term
\begin{align*}
-\int_{R\leq|x|}\frac{2(p-1)}{p+1}\left(-3+F''\left(\frac{r}{R}\right)+\frac{2R}{r}F'\left(\frac{r}{R}\right)\right)|u|^{p+1}dx.
\end{align*}
Applying \eqref{069}, Lemma \ref{Radial Sobolev inequality}, and Young's inequality, we get
\begin{align*}
	&-\int_{R\leq|x|}\frac{2(p-1)}{p+1}\left(-3+F''\left(\frac{r}{R}\right)+\frac{2R}{r}F'\left(\frac{r}{R}\right)\right)|u|^{p+1}dx\\
	&\hspace{3.0cm}\lesssim\int_{R\leq|x|}4\left(1-F''\left(\frac{r}{R}\right)\right)|u|^{p+1}dx\\
	&\hspace{3.0cm}\lesssim4\int_R^\infty\left(1-F''\left(\frac{r}{R}\right)\right)|u(r)|^{p+1}r^2dr\\
	&\hspace{3.0cm}=4\int_R^\infty\int_R^r\frac{d}{ds}\left(1-F''\left(\frac{s}{R}\right)\right)ds|u(r)|^{p+1}r^2dr\\
	&\hspace{3.0cm}=4\int_R^\infty\int_s^\infty|u(r)|^{p+1}r^2dr\frac{d}{ds}\left(1-F''\left(\frac{s}{R}\right)\right)ds\\
	&\hspace{3.0cm}\lesssim4\int_R^\infty\int_{s\leq|x|}|u(x)|^{p+1}dx\frac{d}{ds}\left(1-F''\left(\frac{s}{R}\right)\right)ds\\
	&\hspace{3.0cm}\lesssim4\int_R^\infty\frac{1}{s^2}\|u\|_{L^2(s\leq|x|)}^\frac{p+3}{2}\|\nabla u\|_{L^2(s\leq|x|)}^\frac{p-1}{2}\frac{d}{ds}\left(1-F''\left(\frac{s}{R}\right)\right)ds\\
	&\hspace{3.0cm}=4\int_R^\infty\left(\varepsilon^{-\frac{p-1}{4}}s^{-2}\|u\|_{L^2(s\leq|x|)}^\frac{p+3}{2}\right)\left(\varepsilon^\frac{p-1}{4}\|\nabla u\|_{L^2(s\leq|x|)}^\frac{p-1}{2}\right)\frac{d}{ds}\left(1-F''\left(\frac{s}{R}\right)\right)ds\\
	&\hspace{3.0cm}\lesssim4\int_R^\infty\left(\varepsilon^{-\frac{p-1}{5-p}}s^{-\frac{8}{5-p}}\|u\|_{L^2(s\leq|x|)}^\frac{2(p+3)}{5-p}+\varepsilon\|\nabla u\|_{L^2(s\leq|x|)}^2\right)\frac{d}{ds}\left(1-F''\left(\frac{s}{R}\right)\right)ds\\
	&\hspace{3.0cm}\lesssim\varepsilon^{-\frac{p-1}{5-p}}R^{-\frac{8}{5-p}}\|u\|_{L^2(R\leq|x|)}^\frac{2(p+3)}{5-p}+\varepsilon\int_R^\infty\int_s^\infty|u'(r)|^2r^2dr\frac{d}{ds}\left(1-F''\left(\frac{s}{R}\right)\right)ds\\
	&\hspace{3.0cm}=\varepsilon^{-\frac{p-1}{5-p}}R^{-\frac{8}{5-p}}\|u\|_{L^2(R\leq|x|)}^\frac{2(p+3)}{5-p}+\varepsilon\int_R^\infty\int_R^r\frac{d}{ds}\left(1-F''\left(\frac{s}{R}\right)\right)ds|u'(r)|^2r^2dr\\
	&\hspace{3.0cm}=\varepsilon^{-\frac{p-1}{5-p}}R^{-\frac{8}{5-p}}\|u\|_{L^2(R\leq|x|)}^\frac{2(p+3)}{5-p}+\varepsilon\int_R^\infty\left(1-F''\left(\frac{r}{R}\right)\right)|u'(r)|^2r^2dr.
\end{align*}
Similarly, we have
\begin{align*}
2\int_{R\leq|x|}\left\{1-\frac{R}{r}F'\left(\frac{r}{R}\right)\right\}(x\cdot\nabla V)|u|^2dx
	&\leq\int_{R\leq|x|}\left(3-F''\left(\frac{r}{R}\right)-\frac{2R}{r}F'\left(\frac{r}{R}\right)\right)(x\cdot\nabla V)|u|^2dx\\
	&\lesssim\int_{R\leq|x|}\left(1-F''\left(\frac{r}{R}\right)\right)(x\cdot\nabla V)|u|^2dx\\
	&\lesssim \|x\cdot\nabla V\|_{L^\frac{3}{2}(R\leq|x|)}\int_{R\leq|x|}\left(1-F''\left(\frac{r}{R}\right)\right)|u'|^2dx.
\end{align*}
Therefore, we get
\begin{align*}
	&2\text{Im}\frac{d}{dt}\int_{\mathbb{R}^3}\nabla F_R\cdot\nabla u\overline{u}dx\leq-4\delta+\frac{C}{R^2}M[u_0]+C\varepsilon^{-\frac{p-1}{5-p}}R^{-\frac{8}{5-p}}M[u_0]^\frac{(p+3)}{5-p}\\
	&\hspace{3.0cm}+\left(C\varepsilon+C\|x\cdot\nabla V\|_{L^\frac{3}{2}(R\leq|x|)}-4\right)\int_{R\leq|x|}\left(1-F''\left(\frac{r}{R}\right)\right)|u'|^2dx.
\end{align*}
Thus, if we take $\varepsilon>0$ sufficiently small such as $C\varepsilon<2$ and then, we take $R>0$ sufficiently large such as
\begin{align*}
C\|x\cdot\nabla V\|_{L^\frac{3}{2}(R\leq|x|)}<\frac{3p-7}{4}<2\ \ \text{ and }\ \ \frac{C}{R^2}M[u_0]+C\varepsilon^{-\frac{p-1}{5-p}}R^{-\frac{8}{5-p}}M[u_0]^\frac{(p+3)}{5-p}<2\delta,
\end{align*}
it follows that
\begin{align*}
\text{Im}\frac{d}{dt}\int_{\mathbb{R}^3}\nabla F_R\cdot\nabla u\overline{u}dx<-\delta<0.
\end{align*}
Integrating this inequality over $[0,t)$,
\begin{align*}
\text{Im}\int_{\mathbb{R}^3}\nabla F_R\cdot\nabla u\overline{u}dx\leq -\delta t+\text{Im}\int_{\mathbb{R}^3}\nabla F_R\cdot\nabla u_0\overline{u_0}dx
\end{align*}
On the other hand, we have
\begin{align*}
\left|\text{Im}\int_{\mathbb{R}^3}\nabla F_R\cdot\nabla u\overline{u}dx\right|
	&\leq\int_{\mathbb{R}^3}|\nabla F_R||\nabla u||u|dx=R\int_{\mathbb{R}^3}\left|F'\left(\frac{r}{R}\right)\right||\nabla u||u|dx\leq CRM[u_0]^\frac{1}{2}\|\nabla u(t)\|_{L^2}.
\end{align*}
If we take $T_0$ satisfying
\begin{align*}
-\delta T_0+\text{Im}\int_{\mathbb{R}^3}\nabla F_R\cdot\nabla u_0\overline{u_0}dx<-\frac{1}{2}\delta T_0,
\end{align*}
then, from these inequalities, it follows that
\begin{align*}
\frac{1}{2}\delta t\leq-\text{Im}\int_{\mathbb{R}^3}\nabla F_R\cdot\nabla u\overline{u}dx\leq\left|\text{Im}\int_{\mathbb{R}^3}\nabla F_R\cdot\nabla u\overline{u}dx\right|\leq CRM[u_0]^\frac{1}{2}\|\nabla u(t)\|_{L^2}.
\end{align*}
Therefore, we have
\begin{align*}
Ct^2\leq\|\nabla u(t)\|_{L^2}^2
\end{align*}
for any $t\geq T_0$. From \eqref{076}, Lemma \ref{Radial Sobolev inequality}, and Young's inequality,
\begin{align*}
	&2\text{Im}\frac{d}{dt}\int_{\mathbb{R}^3}\nabla F_R\cdot\nabla u\overline{u}dx\\
	&\hspace{1.5cm}\leq4\int_{\mathbb{R}^3}\left(|\nabla u|^2+V|u|^2-\frac{3(p-1)}{2(p+1)}|u|^{p+1}\right)dx\\
	&\hspace{3.5cm}+C\|x\cdot\nabla V\|_{L^\frac{3}{2}(R\leq|x|)}\|\nabla u\|_{L^2}^2+\frac{C}{R^2}M[u_0]+\frac{C}{R^{p-1}}M[u_0]^\frac{p+3}{4}\|\nabla u\|_{L^2}^\frac{p-1}{2}\\
	&\hspace{1.5cm}=6(p-1)E_V[u_0]-(3p-7)\|\nabla u\|_{L^2}^2-(3p-7)\int_{\mathbb{R}^3}V|u|^2dx\\
	&\hspace{3.5cm}+C\|x\cdot\nabla V\|_{L^\frac{3}{2}(R\leq|x|)}\|\nabla u\|_{L^2}^2+\frac{C}{R^2}M[u_0]+\frac{C}{R^{p-1}}M[u_0]^\frac{p+3}{4}\|\nabla u\|_{L^2}^\frac{p-1}{2}\\
	&\hspace{1.5cm}\leq 6(p-1)E_V[u_0]-\frac{3p-7}{2}\|\nabla u\|_{L^2}^2+\frac{C}{R^2}M[u_0]+\frac{C}{R^\frac{4(p-1)}{5-p}}M[u_0]^\frac{p+3}{5-p}.
\end{align*}
Since $\|\nabla u\|_{L^2}^2\geq Ct^2$ and $E_V$, $M$ are independent of $t$, there exists $T_1\geq T_0$ such that
\begin{align*}
2\text{Im}\frac{d}{dt}\int_{\mathbb{R}^3}\nabla F_R\cdot\nabla u\overline{u}dx\leq -\frac{3p-7}{4}\|\nabla u(t)\|_{L^2}^2.
\end{align*}
Integrating this inequality over $[T_1,t]$,
\begin{align*}
\text{Im}\int_{\mathbb{R}^3}\nabla F_R\cdot\nabla u\overline{u}dx-\text{Im}\int_{\mathbb{R}^3}\nabla F_R\cdot\nabla u(T_1)\overline{u(T_1)}dx\leq -\frac{3p-7}{8}\int_{T_1}^t\|\nabla u(s)\|_{L^2}^2ds.
\end{align*}
Here, considering
\begin{align*}
\text{Im}\int_{\mathbb{R}^3}\nabla F_R\cdot\nabla u(T_1)\overline{u(T_1)}dx<-\frac{1}{2}\delta T_0<0,
\end{align*}
we get
\begin{align*}
\frac{3p-7}{8}\int_{T_1}^t\|\nabla u(s)\|_{L^2}^2ds\leq-\text{Im}\int_{\mathbb{R}^3}\nabla F_R\cdot\nabla u\overline{u}dx\leq CRM[u_0]^\frac{1}{2}\|\nabla u(t)\|_{L^2}.
\end{align*}
We set
\begin{align*}
S(t):=\int_{T_1}^t\|\nabla u(s)\|_{L^2}^2ds\ \ \text{ and }\ \ A:=\frac{1}{M[u_0]}\left(\frac{3p-7}{8CR}\right)^2.
\end{align*}
Then,
\begin{align*}
A\leq \frac{S'(t)}{S(t)^2}.
\end{align*}
Integrating this inequality over $[T_1+1,t)$,
\begin{align*}
A(t-T_1-1)\leq\frac{1}{S(T_1+1)}-\frac{1}{S(t)}\leq\frac{1}{S(T_1+1)}<\infty.
\end{align*}
However, this inequality is contradiction if we take a limit $t\rightarrow\infty$.
\end{proof}

\section{Appendix}

In this section, we prove Corollary \ref{Mass-critical result} by using Lemma \ref{Lemma1 for blows-up or grows-up} and Lemma \ref{Lemma2 for blows-up or grows-up} with $p=\frac{7}{3}$. We also use the following lemma, which is a slight modification of Lemma \ref{Lemma3 for blows-up or grows-up}.

\begin{lemma}\label{Lemma for mass-critical result}
Let $p=\frac{7}{3}$. Let $V$ satisfy ``$V\in\mathcal{K}_0\cap L^\frac{3}{2}$ and $\|V_-\|_{\mathcal{K}}<4\pi$'' or $V\in L^\sigma$ for some $\sigma>\frac{3}{2}$. We assume that $x\cdot\nabla V+2V\geq0$, $u_0$ satisfies $E_V[u_0]<0$. Then, we have
\begin{align*}
\|\nabla u(t)\|_{L^2}^2-\frac{3}{5}\|u(t)\|_{L^\frac{10}{3}}^\frac{10}{3}
	&-\frac{1}{2}\int_{\mathbb{R}^3}x\cdot\nabla V(x)|u(t,x)|^2dx\\
	&\leq\|\nabla u(t)\|_{L^2}^2-\frac{3}{5}\|u(t)\|_{L^\frac{10}{3}}^\frac{10}{3}+\int_{\mathbb{R}^3}V(x)|u(t,x)|^2dx=2E_V[u_0].
\end{align*}
for any $t\in(T_\text{min},T_\text{max})$, where $u$ is the solution to (NLS$_V$) on $(T_\text{min},T_\text{max})$.
\end{lemma}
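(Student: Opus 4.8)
The plan is to prove the two relations separately. The structural point is that the mass-critical exponent $p=\frac{7}{3}$ is exactly the value at which the virial functional coincides with twice the energy, so that---unlike in Lemma \ref{Lemma3 for blows-up or grows-up}---no coercivity or subthreshold hypothesis is needed.

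For the left inequality I would proceed verbatim as in the proof of Lemma \ref{Lemma3 for blows-up or grows-up}. Both sides share the terms $\|\nabla u(t)\|_{L^2}^2-\frac{3}{5}\|u(t)\|_{L^\frac{10}{3}}^\frac{10}{3}$, so the claim reduces to the pointwise-in-time bound
\[
-\frac{1}{2}\int_{\mathbb{R}^3}x\cdot\nabla V(x)|u(t,x)|^2dx\leq\int_{\mathbb{R}^3}V(x)|u(t,x)|^2dx,
\]
which follows by writing $-\frac{1}{2}\,x\cdot\nabla V=-\frac{1}{2}(x\cdot\nabla V+2V)+V$ and discarding the nonpositive contribution using the hypothesis $x\cdot\nabla V+2V\geq0$.

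For the equality I would simply compute the middle expression from the definition of the energy. At $p=\frac{7}{3}$ one has $p+1=\frac{10}{3}$, and the two coefficients entering the problem coincide, namely $\frac{3(p-1)}{2(p+1)}=\frac{2}{p+1}=\frac{3}{5}$. Hence
\[
2E_V[u(t)]=\|\nabla u(t)\|_{L^2}^2+\int_{\mathbb{R}^3}V(x)|u(t,x)|^2dx-\frac{3}{5}\|u(t)\|_{L^\frac{10}{3}}^\frac{10}{3},
\]
which is exactly the middle expression, and energy conservation $E_V[u(t)]=E_V[u_0]$ then yields the value $2E_V[u_0]$. Equivalently, this is the identity \eqref{065} evaluated at $p=\frac{7}{3}$, where the coefficient $\frac{3p-7}{4}$ of $\|\mathcal{H}^\frac{1}{2}u(t)\|_{L^2}^2$ vanishes and $\frac{3(p-1)}{2}=2$.

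I do not anticipate a genuine obstacle: the entire content is this coefficient coincidence at criticality, which removes the $\|\mathcal{H}^\frac{1}{2}u\|_{L^2}^2$ term that in the supercritical case forced the use of the subthreshold assumption \eqref{008} together with the coercivity bound \eqref{056}. The only care required is the bookkeeping of constants so that the virial functional lands precisely on $2E_V[u_0]$; the hypothesis $E_V[u_0]<0$ plays no role in the identity itself but guarantees that this constant is negative, which is what the subsequent blow-up argument built on Lemma \ref{Lemma2 for blows-up or grows-up} will exploit.
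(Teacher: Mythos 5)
Your proposal is correct and matches the paper's proof in substance: the paper disposes of the left inequality by invoking the same argument as in Lemma \ref{Lemma3 for blows-up or grows-up}, which is precisely your decomposition $-\frac{1}{2}x\cdot\nabla V=-\frac{1}{2}(x\cdot\nabla V+2V)+V$ followed by discarding the nonpositive term, and it obtains the identity directly from the definition of $E_V$ together with energy conservation, exactly as you do via the coefficient coincidence $\frac{3(p-1)}{2(p+1)}=\frac{2}{p+1}=\frac{3}{5}$ at $p=\frac{7}{3}$. Your closing remark that $E_V[u_0]<0$ is irrelevant to the identity itself and only feeds the subsequent blow-up argument is also accurate.
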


\begin{proof}
The first inequality is proved by the same argument as the proof of Lemma \ref{Lemma3 for blows-up or grows-up}. The second identity is proved by the definition of the energy $E_V$.
\end{proof}

\begin{proof}[Proof of Corollary \ref{Mass-critical result}]
Corollary \ref{Mass-critical result} is deduced by the same argument as the proof of Theorem \ref{Scattering versus blowing-up or growing-up} (2). In the argument, Lemma \ref{Lemma1 for blows-up or grows-up}, Lemma \ref{Lemma2 for blows-up or grows-up}, and Lemma \ref{Lemma for mass-critical result} are used.
\end{proof}

\end{document}